\newcommand{\diag}{\operatorname{diag}}
\newcommand{\diam}{\operatorname{diam}}
\newcommand{\dvol}{\operatorname{dvol}}
\newcommand{\End}{\operatorname{End}}
\newcommand{\Id}{\operatorname{Id}}
\newcommand{\Ker}{\operatorname{Ker}}
\newcommand{\R}{{\mathbb R}}
\newcommand{\Rm}{\operatorname{Rm}}
\newcommand{\SL}{\operatorname{SL}}
\newcommand{\SO}{\operatorname{SO}}
\newcommand{\Sol}{\operatorname{Sol}}
\newcommand{\SU}{\operatorname{SU}}
\newcommand{\Tr}{\operatorname{Tr}}
\newcommand{\vol}{\operatorname{vol}}
\newcommand{\Z}{{\mathbb Z}}
\numberwithin{equation}{section}
\theoremstyle{plain}
\newtheorem{definition}[equation]{Definition}
\newtheorem{lemma}[equation]{Lemma}
\newtheorem{theorem}[equation]{Theorem}
\newtheorem{proposition}[equation]{Proposition}
\newtheorem{corollary}[equation]{Corollary}
\theoremstyle{remark}
\newtheorem{remark}[equation]{Remark}
\newtheorem{example}[equation]{Example}
\begin{document}

\title[On the initial geometry of a vacuum cosmological spacetime]
      {On the initial geometry of a vacuum cosmological spacetime}

\author{John Lott}
\address{Department of Mathematics\\
University of California, Berkeley\\
Berkeley, CA  94720-3840\\
USA} \email{lott@berkeley.edu}

\thanks{Research partially supported by NSF grant
DMS-1810700}
\date{February 16, 2020}

\begin{abstract}
  In the first part of this paper we consider expanding vacuum
  cosmological spacetimes with
  a free $T^N$-action. Among them, we give evidence that Gowdy spacetimes have
  AVTD (asymptotically velocity term dominated)
  behavior for their initial geometry,
  in any dimension.  We then give sufficient conditions to
  reach
  a similar conclusion about a
  $T^2$-invariant four dimensional
  nonGowdy spacetime. In the second part of the paper
  we consider vacuum cosmological spacetimes with crushing
  singularities. We introduce a monotonic
  quantity to characterize Kasner spacetimes.
Assuming scale-invariant  
  curvature bounds and local volume bounds,
  we give results about causal pasts. 
\end{abstract}

\maketitle


\section{Introduction} \label{sect1}

This paper is about the geometry of an expanding vacuum spacetime
that is diffeomorphic to $(0, t_0] \times X$, with $X$ compact, as one
  approaches the initial singularity at $t=0$.
  There are many open questions in this field, along with
  many partial results.
We refer to Isenberg's review \cite{Isenberg (2015)}.

The known results can be classified by how many
local symmetries are assumed.
Naturally, the more symmetries that are assumed, the stronger the
results.  Even in one extreme,
when spatial slices are locally homogeneous, the
asymptotic behavior is not completely understood.
It is also of interest to find any results in the other extreme, when one
assumes no local symmetries.

In this paper we only consider vacuum spacetimes.
Regarding the physical relevance of this restriction,
there are heuristic arguments that
under some assumptions, the matter content is not relevant for the
asymptotic behavior as one approaches an initial singularity
\cite[Chapter 4]{Belinski (2017)}. Suffice it to say that
results about vacuum spacetimes may have wider application.

  In Section \ref{sect2} we consider vacuum spacetimes with a free
  spatial $T^N$-action (possibly globally twisted) and a two dimensional
  quotient space. We first consider
  a Gowdy spacetime, meaning that
  the normal spaces to the orbits form an integrable distribution.
  Results about four dimensional
  Gowdy spacetimes are described in Ringstr\"om's review \cite{Ringstrom (2010)}.
In arbitrary dimension, the metric can be expressed in local coordinates by
\begin{equation}
  \overline{g} \: = \: \sum_{I,J=1}^N G_{IJ} \: dx^I \: dx^J \: + \:
  \sum_{\alpha, \beta = 1}^{2} g_{\alpha \beta} \: db^\alpha db^\beta.
\end{equation}
Here $\sum_{\alpha, \beta = 1}^{2} g_{\alpha \beta} \: db^\alpha db^\beta$
is a Lorentzian metric on the quotient space $B$. The matrix $(G_{IJ}) =
(G_{IJ})(b)$ is a $b$-dependent positive definite
symmetric $(N \times N)$-matrix.

As is standard, we assume that there is
an ``areal'' time coordinate $t \in (0, t_0]$ so that $\det(G) = t^N$,
c.f. \cite{Berger-Chrusciel-Isenberg-Moncrief (1997)}.
We write
$\sum_{\alpha, \beta = 1}^{2} g_{\alpha \beta} \: db^\alpha db^\beta
= - L^2 dt^2 + h dy^2$, where $y$ is a local coordinate on $S^1$.

One possible limiting behavior is
AVTD (asymptotically velocity term dominated) asymptotics.
With AVTD asymptotics,
as $t \rightarrow 0$, the leading asymptotics are given by the
VTD (velocity term dominated) equations, obtained by
dropping spatial derivatives in the evolution equations.
This is discussed in Sections 4-6 of Isenberg's review \cite{Isenberg (2015)}.

If we make a change of variable $t = e^{- \: \tau}$ then $\tau \rightarrow
\infty$ corresponds to approaching the singularity.
The VTD equation for $G$ is
\begin{equation} \label{0.2}
  (G^{-1} G_\tau)_\tau = 0.
  \end{equation}
By the choice of time parameter,
$\Tr \left( (G^{-1} G_\tau)_\tau \right) = (\ln \det G)_{\tau \tau} =
(- N \tau)_{\tau \tau} = 0$.  The content of
(\ref{0.2}) is that for each $y \in S^1$,
the normalized matrix
$(\det G)^{- \: \frac{1}{N}} G$ describes a geodesic, 
as a function of $\tau$, in the
symmetric space $\SL(N, \R)/\SO(N)$ of positive definite symmetric
$(N \times N)$-matrices with determinant one.
The AVTD
hypothesis for $G$ is that $(G^{-1} G_\tau)_\tau$ approaches zero as
$\tau \rightarrow \infty$.

In the case of four dimensional Gowdy spacetimes, i.e. when $N=2$,
Ringstr\"om
proved pointwise statements about the asymptotics of $G$, e.g. for
each $y \in S^1$
there is a limit $\lim_{\tau \rightarrow \infty}
(\det G(y, \tau))^{- \: \frac12}
G(y, \tau)$ 
in the ideal boundary of $H^2 = \SL(2, \R)/\SO(2)$, and the limit is
approached at an asymptotically constant speed
\cite{Ringstrom (2006)}. (This followed earlier work by
Isenberg and Moncrief on the polarized Gowdy case
\cite{Isenberg-Moncrief (1990)}.) One interesting feature is the
possible occurence of ``spikes'' in the spatial behavior as
$\tau \rightarrow \infty$
\cite{Berger-Garfinkle (1998),Berger-Moncrief (1993),Rendall-Weaver (2001)}.

We define an $H^{-1}$-Sobolev space of matrix-valued maps
on $S^1$ (equation (\ref{1.40})).
The following result roughly says that the $H^{-1}$-norm of
$(G^{-1} G_\tau)_\tau$ decays exponentially fast in $\tau$.

\begin{theorem} \label{0.3}
\begin{equation}
  \int_{\tau_0}^\infty e^{N\tau} \|
  (G^{-1} G_\tau)_\tau \|^2_{H^{-1}_{\tau}} \: d\tau < \infty.
\end{equation}
\end{theorem}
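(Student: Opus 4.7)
The plan is to combine a divergence-form identity coming from the wave-map structure of the vacuum Einstein equations with a Ringström-style energy estimate for $\SL(N,\R)/\SO(N)$-valued maps. In the Gowdy form, the $T^N$-symmetric vacuum equations force $\tilde G = t^{-1}G$ (which has $\det\tilde G = 1$) to be a wave map from the $2$-dimensional Lorentzian quotient into the symmetric space $\SL(N,\R)/\SO(N)$. Written in divergence form with respect to the quotient metric $-L^2dt^2 + h\,dy^2$ and then in the $\tau$-coordinate, the resulting equation is
\begin{equation*}
  \partial_\tau\!\left(\tfrac{\sqrt h\,e^\tau}{L}\,G^{-1}G_\tau\right) \;=\; e^{-\tau}\,\partial_y\!\left(\tfrac{L}{\sqrt h}\,G^{-1}G_y\right).
\end{equation*}
The areal gauge $\det G = e^{-N\tau}$ gives a consistency identity among the $\tau$-derivatives of $L$ and $\sqrt h$ (obtained by taking the trace and using the relevant $tt$-Einstein equation), which allows one to absorb the $\partial_\tau$ of the weight on the left and rewrite $(G^{-1}G_\tau)_\tau$ as $\partial_y$ of a matrix-valued function multiplied by an explicit decaying exponential, plus a lower-order algebraic remainder.

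The second ingredient is an energy functional of the form
\begin{equation*}
  \mathcal{E}(\tau) \;=\; \tfrac12\int_{S^1}\!\left[\alpha(\tau,y)\,\tr\!\bigl((G^{-1}G_\tau)^{T}G^{-1}G_\tau\bigr) \;+\; \beta(\tau,y)\,\tr\!\bigl((G^{-1}G_y)^{T}G^{-1}G_y\bigr)\right]dy,
\end{equation*}
with weights $\alpha,\beta$ determined from $L$, $h$, and the areal gauge so that, using the evolution equations for $G$ and for the 2-dimensional quotient geometry, $\mathcal{E}$ is monotonically non-increasing. In particular, the spatial potential piece remains bounded for all $\tau \ge \tau_0$, and the decrement $\mathcal{E}(\tau_0) - \lim_{\tau\to\infty}\mathcal{E}(\tau)$ provides an integrable $\tau$-weight for $\|G^{-1}G_y\|_{L^2}^2$. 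By the definition of $H^{-1}_\tau$ as the dual of a $\tau$-weighted $H^1$-norm on $S^1$ in equation~(1.40), spatial derivatives satisfy $\|\partial_y F\|_{H^{-1}_\tau} \le C\|F\|_{L^2}$; applied to the identity for $(G^{-1}G_\tau)_\tau$ above, this gives
\begin{equation*}
  e^{N\tau}\,\|(G^{-1}G_\tau)_\tau\|^{2}_{H^{-1}_\tau} \;\le\; C\cdot(\text{weighted spatial energy density at }\tau),
\end{equation*}
and integrating in $\tau$ using the energy monotonicity yields the desired finiteness.

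The main obstacle is the construction and the monotonicity verification of $\mathcal{E}$ in arbitrary dimension $N$. For $N=2$ this $\mathcal{E}$ is essentially a rewriting of the classical Isenberg--Moncrief--Ringström energy for four-dimensional Gowdy spacetimes, whose monotonicity reduces to a Hamiltonian and momentum constraint computation. For general $N$ the same computation must be carried out with the $\SL(N,\R)/\SO(N)$-geometry of the target, in particular using the Bianchi-type identity $d(G^{-1}dG) + (G^{-1}dG)\wedge(G^{-1}dG) = 0$ for the Maurer--Cartan form of $G$, and combined with the Einstein evolution equations for the lapse $L$ and the spatial coefficient $h$ on the quotient $B$, so that the integration-by-parts boundary contributions close up on $S^1$ and the remaining bulk contribution has a favorable sign.
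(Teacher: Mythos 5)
Your proposal correctly identifies the overall architecture of the paper's argument: find an energy $\widetilde{\mathcal E}$ that is monotonically nondecreasing in the areal time $t$ (hence nonincreasing in $\tau$); integrate its time derivative to get a bound $\int_0^{t_0}\int_Y \sqrt{\det G}\,Lh^{-1}\Tr((G^{-1}G_y)^2)\,\dvol\,dt<\infty$; write the $IJ$-Einstein equation in the divergence form $\partial_t(\sqrt{\det G}\,L^{-1}h^{1/2}G^{-1}G_t)=\partial_y(\sqrt{\det G}\,Lh^{-1/2}G^{-1}G_y)$; and then test against a smooth section $\sigma$ and use Cauchy--Schwarz to recover an $H^{-1}$-bound on $(G^{-1}G_\tau)_\tau$ after the change of variable $t=e^{-\tau}$. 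This is exactly the paper's strategy.

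However, there is a genuine gap, and you acknowledge it yourself: you never actually produce the monotonic functional, nor verify its monotonicity in dimension $N$. In the paper this is not a routine step. The relevant functional is
\begin{equation*}
\widetilde{\mathcal E}(t) = \frac{\sqrt{\det G}}{(\ln\det G)_t}\int_Y\Big[ Lh^{-1/2}\Tr\big((G^{-1}G_y)^2\big)+L^{-1}h^{1/2}\Tr\big((G^{-1}G_t)^2\big)\Big]\,dy,
\end{equation*}
and the precise prefactor $\sqrt{\det G}/(\ln\det G)_t$ is the whole point: it is chosen so that, after applying the matrix wave equation (A.17) and the $t$-independence of the density $\mu$, the $\Tr((G^{-1}G_t)^2)$ term in $d\widetilde{\mathcal E}/dt$ cancels exactly and the remaining term is nonnegative. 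Your statement that for $N=2$ this energy is ``essentially a rewriting of the classical Isenberg--Moncrief--Ringstr\"om energy'' is not correct in the nonpolarized case; the paper explicitly points out that the monotonic functional used here differs from those previously considered except in the polarized $N=2$ case, where it coincides with the Isenberg--Moncrief $\epsilon^{(1)}$. Since that prefactor is the key new ingredient, leaving it as an ``obstacle'' means the argument as written does not close.

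A secondary imprecision: the inequality $\|\partial_y F\|_{H^{-1}_\tau}\le C\|F\|_{L^2}$ cannot be invoked as stated. The $H^{-1}_\tau$ norm in equation (1.40) is dual to an $H^1$ norm built from the covariant derivative $D_y\sigma=\sigma_y+\tfrac12[G^{-1}G_y,\sigma]$ that preserves self-adjointness with respect to $G$, not the naive $\partial_y$; and the pairing and norm are both taken with respect to the fixed density $\mu$ rather than $dy$. The duality step works, but only after one verifies (as the paper does via (1.29)--(1.31)) that testing the divergence form against a self-adjoint $\sigma$ produces precisely $\Tr((G^{1/2}(D_y\sigma)G^{-1/2})(G^{-1/2}G_yG^{-1/2}))$, i.e. the correct inner product for Cauchy--Schwarz. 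This self-adjointness bookkeeping is what makes the $H^{-1}$ norm the right target space and should not be elided. Finally, the Maurer--Cartan/Bianchi identity you invoke plays no role in the paper's computation; the derivation of $d\widetilde{\mathcal E}/dt$ uses only (A.17) and the constancy of $\mu$, so invoking additional quotient-metric Einstein equations is unnecessary and suggests the intended cancellation mechanism is not fully in view.
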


Hence there is AVTD-like behavior.
The appearance of the $H^{-1}$-Sobolev space is not unreasonable, in view of
the possible occurence of spikes in the spatial behavior.
Compared to earlier results, one difference
in Theorem \ref{0.3} is the use of the
Sobolev norm to measure the AVTD-like behavior. The norm arises from the use of
a monotonic functional, that in fact
differs in the nonpolarized case from those
previously considered.  Another feature is that the result is somewhat
more geometric, in that it holds in
arbitrary dimension.

We next consider four dimensional spacetimes that have a free spatial
$T^2$-action but are nonGowdy, where there are fewer results. 
As in the paper \cite{LeFloch-Smulevici (2015)} by
LeFloch and Smulevici, the metric has a local expression
\begin{equation}
  g = e^{2(\eta - U)} (-dR^2 + a^{-2} d\theta^2) +
  e^{2U} (dx^1 + Adx^2 + (G+AH) d\theta)^2 +
  e^{-2U}R^2(dx^2+Hd\theta)^2.
\end{equation}
Here the time parameter $R$ is such that the area of the $T^2$-orbit is
$R$.
The variables $\eta$, $U$, $a$, $A$, $G$ and $H$ are functions of $R$ and
$\theta$.
We make a change of variable $R = e^{- \tau}$.
The AVTD asymptotics for $U$ are that $a(a^{-1}U_\tau)_\tau
\: - \: \frac12 e^{2 \tau} e^{4U} A_\tau^2$ goes to zero as
$\tau \rightarrow \infty$
\cite{Ames-Beyer-Isenberg-LeFloch (2013),Clausen-Isenberg (2007)}.

Unlike in the Gowdy case, one does not expect AVTD-like behavior in
general. Some solutions with a ``half-polarized'' condition on $A$
were
constructed by Ames-Beyer-Isenberg-LeFloch using Fuchsian methods 
\cite{Ames-Beyer-Isenberg-LeFloch (2013)};
those solutions have AVTD-like behavior.
The next theorem gives a sufficient condition for
AVTD-like behavior to hold for $U$.

\begin{theorem} \label{0.6}
  If $\int_{S^1} H \: d\theta$ is bounded below as
  $\tau \rightarrow \infty$, and
\begin{equation} \label{0.7}
\int_{\tau_0}^\infty e^{2\tau}  \| e^{4U} a^2 A_\theta^2 \|_{H^{-1}_{\tau}}^2 \: d\tau < \infty,
\end{equation}
 then 
 \begin{equation}
   \int_{\tau_0}^\infty e^{2\tau}  \|
   a(a^{-1}U_\tau)_\tau
   \: - \: \frac12 e^{2 \tau} e^{4U} A_\tau^2
   \|_{H^{-1}_{\tau}}^2 \: d\tau < \infty.
\end{equation}
\end{theorem}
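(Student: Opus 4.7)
The plan is to adapt the monotonic-functional strategy of Theorem~\ref{0.3} to the nonGowdy $T^2$-symmetric setting, treating the twist contributions produced by $G$ and $H$ as corrections to a wave-map-into-$H^2$ reduction.

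First, using the reduced vacuum Einstein equations in the form derived in \cite{LeFloch-Smulevici (2015)} together with the change of variable $R = e^{-\tau}$, I would rewrite the AVTD quantity
\begin{equation*}
Q \;=\; a(a^{-1}U_\tau)_\tau - \tfrac{1}{2}e^{2\tau}e^{4U}A_\tau^2
\end{equation*}
as a sum of three pieces: a total $\theta$-derivative involving $aU_\theta$; a term proportional to $e^{2\tau}e^{4U}a^2 A_\theta^2$; and a twist source $\mathcal{T}[G,H]$ built from the $\tau$-independent twist constants associated with the nonGowdy metric components. The $H^{-1}_\tau$-norm of $Q$ would then be estimated term by term: the total-derivative contribution by duality against $H^1_\tau$, the $A_\theta$ contribution via hypothesis~(\ref{0.7}), and the twist contribution via the hypothesis on $\int_{S^1} H\,d\theta$.

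Second, to bound the $L^2_\tau$-norm of $aU_\theta$ appearing in the total-derivative estimate, I would introduce a twist-corrected functional
\begin{equation*}
\mathcal F(\tau) \;=\; E_{\mathrm{wm}}(\tau) + \Phi(\tau),
\end{equation*}
where $E_{\mathrm{wm}}$ is a hyperbolic-plane energy of $(U,A)$ modelled on the one used in the proof of Theorem~\ref{0.3}, and $\Phi(\tau)$ is a correction chosen so that $\mathcal F'(\tau)$ is nonpositive modulo an error bounded by $e^{2\tau}\|e^{4U}a^2 A_\theta^2\|_{L^2_\tau}^2$. Integration by parts on $S^1$ against the twist equations for $G$ and $H$ identifies the natural candidate for $\Phi$ with a bounded multiple of $\int_{S^1} H\,d\theta$, with the multiplicative constant fixed by the twist constants. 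Under the stated hypotheses, (\ref{0.7}) renders the error term integrable and the lower bound on $\int_{S^1} H\,d\theta$ keeps $\Phi$ bounded, so $\mathcal F(\tau)$ stays uniformly bounded on $[\tau_0,\infty)$. In particular $E_{\mathrm{wm}}$ controls $\|aU_\theta\|_{L^2_\tau}$ and $\|e^{2U}aA_\theta\|_{L^2_\tau}$, which together with the decomposition of $Q$ yields the stated integrability of $e^{2\tau}\|Q\|_{H^{-1}_\tau}^2$.

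The main obstacle is the construction of $\Phi(\tau)$ and the verification of the precise sign in the monotonicity identity for $\mathcal F$. In the nonGowdy case the twist breaks the pure wave-map structure responsible for Theorem~\ref{0.3}, and the unadorned hyperbolic-plane energy is no longer almost-decreasing; at the same time, the coupling between the $U$-equation and $A_\theta$ is sign-indefinite, so cannot be absorbed without further input. Showing that the lower bound on $\int_{S^1} H\,d\theta$ — rather than a pointwise or two-sided bound — is exactly what closes the monotonicity argument, and that hypothesis~(\ref{0.7}) captures precisely the remaining spatial-derivative error, is the heart of the proof.
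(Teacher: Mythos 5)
Your overall plan --- a monotone functional corrected by a multiple of $\int_{S^1}H\,d\theta$, combined with the reduced wave equation for $U$ --- matches the paper's strategy in outline, but one internal step contains a genuine gap and two others are misaligned with the actual argument.

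The gap is in the concluding step. You want a uniform-in-$\tau$ bound on $\mathcal F$, hence on $E_{\mathrm{wm}}$, hence on $\|aU_\theta\|_{L^2_\tau}$, and then to estimate $\|Q\|_{H^{-1}_\tau}$ termwise. But a uniform-in-$\tau$ bound on $\|aU_\theta\|_{L^2_\tau}$ (and in fact boundedness of $R^2\widehat{\mathcal E}_K$ only gives a bound growing like $e^{\tau}$) is far too weak: the target is $\int_{\tau_0}^\infty e^{2\tau}\|Q\|_{H^{-1}_\tau}^2\,d\tau < \infty$, and the $\theta$-derivative piece of $Q$ contributes through Cauchy--Schwarz a factor whose finiteness requires a weighted space--time integral of the form $\int_{\tau_0}^\infty e^{-2\tau}\int_{S^1}aU_\theta^2\,d\theta\,d\tau < \infty$. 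Boundedness of an energy at each time slice does not yield this. The paper instead \emph{integrates the $\tau$-derivative} of the monotone functional $\widetilde{\mathcal E}_K = R^2\widehat{\mathcal E}_K + \tfrac12 K\int_Y H\,d\theta$: Proposition~\ref{1.52} gives $\frac{d\widetilde{\mathcal E}_K}{dR}=2R\int_Y\bigl(aU_\theta^2+\tfrac14 R^{-2}e^{4U}a^{-1}A_R^2\bigr)d\theta \ge 0$ exactly, and the lower bound on $\int_Y H\,d\theta$ makes $\widetilde{\mathcal E}_K$ bounded below, so integrating the derivative produces the finite space--time integral (\ref{1.56}). That is precisely the quantity fed into Cauchy--Schwarz in (\ref{1.59})--(\ref{1.60}). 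The derivative-integration step, rather than boundedness of the functional, is what your proposal is missing and what cannot be replaced by the argument you sketch.

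Two further points are more minor but still off. First, the monotonicity of $\widetilde{\mathcal E}_K$ is exact, not ``nonpositive modulo an error bounded by $e^{2\tau}\|e^{4U}a^2A_\theta^2\|_{L^2_\tau}^2$''; hypothesis (\ref{0.7}) is not used to control the functional at all. It enters only at the very end, once one has shown that $a(a^{-1}U_\tau)_\tau-\tfrac12 e^{2\tau}e^{4U}A_\tau^2+\tfrac12 e^{4U}a^2A_\theta^2$ lies in the weighted $H^{-1}$ space (\ref{1.63}), and one subtracts the last term. Second, the decomposition of $Q$ should not contain a separate ``twist source $\mathcal T[G,H]$'': the reduced wave equation (\ref{1.57}) for $U$ contains no twist term. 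The twist constant $K$ enters the argument only through the additive correction $\tfrac12 K\int_Y H\,d\theta$ to the functional, via the evolution equation (\ref{1.55}) for $H$, not through the equation of motion for $U$.
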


The expression $\int_{S^1} H \: d\theta$ is a holonomy-type term.
The condition (\ref{0.7}) is consistent with the results of
\cite{Ames-Beyer-Isenberg-LeFloch (2013)}, where
$A$ is half-polarized. In those solutions,
(\ref{0.7}) is satisfied.  When $A$ is not half-polarized, the
construction in \cite{Ames-Beyer-Isenberg-LeFloch (2013)}
breaks down. Numerics indicate that general
$T^2$-invariant nonGowdy solutions are not AVTD, and instead have
Mixmaster-type behavior \cite{Berger-Isenberg-Weaver (2001)}.
We do not have anything to say about Mixmaster dynamics, but the results of
the paper may help to clarify the line between AVTD dynamics and
Mixmaster dynamics.

The proofs of Theorems \ref{0.3} and \ref{0.6} involve
finding energy expressions that
are monotonically nondecreasing in real time, integrating the
derivative to get an integral bound on spatial derivative terms, and
then applying the evolution equation.

In Section \ref{sect3}
we consider vacuum spacetimes or, equivalently,
Einstein flows, without any assumed symmetries.
The spacetime is diffeomorphic to
$(0,T_0] \times X$, where $X$ is compact.  In this introduction we
  take $\dim(X) = 3$, although some of the results are true
  for general dimension.
  We assume that there is a crushing singularity as $t \rightarrow 0$,
  meaning that
there is a
sequence of compact Cauchy hypersurfaces going toward the end at
$\{0\} \times X$ whose mean curvatures approach $- \infty$ uniformly.
From Gerhardt's paper \cite{Gerhardt (1983)},
there is a foliation near the end by
constant mean curvature (CMC) compact spatial hypersurfaces, whose mean
curvatures $H$ approach $- \infty$.
We then take $t = - \frac{3}{H}$, the Hubble time,
which ranges in an interval $(0, t_0]$.
The spacetime metric can be written as
$g = - L^2 dt^2 + h(t)$, where $h(t)$ is a Riemannian metric on $X$.

Fischer and Moncrief showed that
the quantity $t^{-3} \vol(X, h(t))$ is monotonically
  nonincreasing in $t$, and is constant if and only if the spacetime is
  a Lorentzian cone over a hyperbolic $3$-manifold
  \cite{Fischer-Moncrief (2002)}. (A similar result was proven by Anderson
  \cite{Anderson (2001)}.) This had implications for the
  long-time behavior of expanding
  spacetimes that live instead on
  $[t_0, \infty)$, and gave rise to the intuition that
  most of such a spacetime, in the sense of volume, should approach
  such a Lorentzian cone; a precise statement is in
  \cite[Section 2.2]{Lott (2018)}. 
  In this paper we are concerned with the behavior in the
  shrinking direction,
  as $t \rightarrow 0$. It turns out that
  $t^{-1} \vol(X, h(t))$ is a partial analog to the
  Fischer-Moncrief quantity.

  \begin{theorem} \label{0.9}
    We have
\begin{equation} \label{0.10}
  \frac{d}{dt} \left( t^{-1} \vol(X, h(t)) \right) =
  - \: \frac{1}{3} \int_X L R \: \dvol_h.
  \end{equation}
Hence
  \begin{equation} 
    \int_0^{t_0} \int_X (- t^2 R) \: L \: \frac{\dvol_{h(t)}}{t}
      \: \frac{dt}{t} < \infty. 
  \end{equation}
\end{theorem}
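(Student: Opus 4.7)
The plan is to derive the identity by combining the CMC foliation's lapse equation with the Hamiltonian constraint, and then to obtain the integral bound by integrating in time and using that $V(t) := \vol(X,h(t))$ is nonnegative.

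First, the ADM evolution $\partial_t h_{ij} = -2LK_{ij}$ gives $\partial_t \dvol_h = -LH\,\dvol_h$, so with the CMC choice $H = -3/t$,
\[
\frac{dV}{dt} \: = \: -\int_X LH\,\dvol_h \: = \: \frac{3}{t}\int_X L\,\dvol_h.
\]
Second, the general evolution of the mean curvature in vacuum reads $\partial_t H = -\Delta L + L|K|^2$. In the CMC setting $\partial_t H = 3/t^2$ is spatially constant, and the Hamiltonian constraint gives $|K|^2 = R + H^2 = R + 9/t^2$, so the lapse equation takes the form
\[
-\Delta L + LR \: = \: \frac{3(1-3L)}{t^2}.
\]
Integrating over $X$, the Laplacian term drops out and one obtains the scalar identity
\[
\int_X LR\,\dvol_h \: = \: \frac{3V}{t^2} \: - \: \frac{9}{t^2}\int_X L\,\dvol_h.
\]

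Combining the two computations,
\[
\frac{d}{dt}\bigl(t^{-1}V\bigr) \: = \: -t^{-2}V + t^{-1}\frac{dV}{dt} \: = \: \frac{1}{t^2}\left(3\int_X L\,\dvol_h - V\right) \: = \: -\frac{1}{3}\int_X LR\,\dvol_h,
\]
which is (\ref{0.10}). Unwinding the scale-invariant notation, $(-t^2R)\cdot L\cdot \dvol_h/t \cdot dt/t = (-R)L\,\dvol_h\,dt$, so integrating the identity on $[\epsilon, t_0]$ yields
\[
\int_\epsilon^{t_0}\int_X(-R)L\,\dvol_h\,dt \: = \: 3\bigl(t_0^{-1}V(t_0) - \epsilon^{-1}V(\epsilon)\bigr) \: \leq \: 3\,t_0^{-1}V(t_0),
\]
uniformly in $\epsilon > 0$; letting $\epsilon \to 0^+$ and using $V(\epsilon) \geq 0$ gives the claimed finiteness.

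I expect the only subtle step to be the sign bookkeeping around the extrinsic curvature and the lapse equation; once one fixes $K_{ij} = -(2L)^{-1}\partial_t h_{ij}$ and uses $R_{\mu\nu}=0$ to eliminate the spacetime Ricci term in $\partial_t H$, everything else is routine ADM manipulation. Thus the main ``obstacle'' is really just correctly writing down the classical formulas in the sign conventions used by the paper, and noting at the end that $V \geq 0$ is what turns the integrated identity into a one-sided bound.
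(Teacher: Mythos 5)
Your proof is correct and is essentially the same as the paper's: both derive the identity $\frac{d}{dt}\bigl((-H)\vol(X,h(t))\bigr) = -\int_X LR\,\dvol_h$ by combining $\partial_t\dvol_h = -LH\,\dvol_h$ with the CMC lapse equation $\partial_t H = -\Delta L + L|K|^2$ together with the Hamiltonian constraint $|K|^2 = R + H^2$, and then integrate in $t$ using $t^{-1}V(t)\ge 0$. The only cosmetic difference is that the paper phrases the intermediate step as a pointwise identity for $\partial_t\bigl((-H)\dvol_h\bigr)$ before integrating over $X$, whereas you compute $dV/dt$ directly and integrate the lapse equation over $X$ to remove $\Delta L$; these are the same calculation reorganized.
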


  One sees from (\ref{0.10}) that $t^{-1} \vol(X, h(t))$ is monotonically
  nondecreasing in $t$ provided that the spatial scalar curvature $R$ is
  nonpositive.  The next result characterizes the equality case.

  \begin{theorem} \label{0.12}
  Suppose that
  $R \le 0$ and $t_1^{-1} \vol(X, h(t_1)) = t_2^{-1} \vol(X, h(t_2))$,
  for some $t_1 < t_2$.
  Suppose that
  $X$ is orientable and that
  there is an aspherical component in the prime decomposition of $X$.
  Then 
  the Einstein flow is a Kasner solution. 
  \end{theorem}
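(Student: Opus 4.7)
The plan is to combine Theorem \ref{0.9} with the vacuum constraint equations and a topological rigidity input coming from the Kazdan--Warner trichotomy and the sharp obstruction to positive scalar curvature on three-manifolds.

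First I would extract pointwise vanishing of the scalar curvature. Formula \eqref{0.10} shows that $t \mapsto t^{-1}\vol(X,h(t))$ is nondecreasing when $R \le 0$, so the equality hypothesis forces its $t$-derivative to vanish identically on $[t_1,t_2]$, i.e.\ $\int_X LR \, \dvol_{h(t)} = 0$ for every such $t$. Since $L > 0$ and $R \le 0$ pointwise, we conclude $R \equiv 0$ on $[t_1,t_2]\times X$. The Hamiltonian constraint $R + H^2 - |K|^2 = 0$ with $H = -3/t$ then forces $|K|^2 = 9/t^2$, so the trace-free part $K_0 = K - \frac{H}{3} h$ satisfies $|K_0|^2 = 6/t^2$, and the momentum constraint (together with $\nabla H = 0$) makes $K_0$ transverse-traceless on each slice.

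Next I would use topology to upgrade scalar flatness to spatial flatness. Each slice metric $h(t)$ with $t\in[t_1,t_2]$ is scalar-flat; by the Kazdan--Warner trichotomy, either $X$ admits a metric of positive scalar curvature or every scalar-flat metric on $X$ is automatically Ricci flat. The presence of an aspherical prime summand in the orientable three-manifold $X$ rules out positive scalar curvature via Gromov--Lawson/Schoen--Yau (and the resolution of geometrization), so each $h(t)$ is Ricci flat, hence flat in three dimensions. Taking the trace of the evolution equation $\partial_t K_{ij} = -\nabla_i\nabla_j L + L(R_{ij} + HK_{ij} - 2K_{ik}K^k_j)$ and using the Hamiltonian constraint yields the elliptic lapse equation $\Delta L - |K|^2 L = -\partial_t H$; inserting $|K|^2 = 9/t^2$ and $\partial_t H = 3/t^2$ gives $\Delta L = (9/t^2)(L - 1/3)$, whose only solution on the closed manifold $X$ is $L \equiv 1/3$ by the maximum principle.

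Finally I would identify the flow as Kasner. On a closed flat three-manifold, a transverse-traceless symmetric two-tensor is necessarily parallel (after lifting to the flat universal cover and Fourier-analyzing, only the constant mode survives the TT conditions), so $K_0(t)$ is parallel with spatially constant principal values and is simultaneously diagonalizable in a global parallel orthonormal frame. In that frame the evolution equation $\partial_t h_{ij} = -\frac{2}{3} K_{ij}$ decouples into ODEs whose solution is the Kasner metric $h_{ij}(t) = (t/3)^{2 p_i}\delta_{ij}$ with $\sum_i p_i = 1$ and $\sum_i p_i^2 = 1$. Well-posedness of the vacuum Cauchy problem then propagates this Kasner structure from $[t_1,t_2]\times X$ to the full maximal development. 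The most delicate step is the topological one, which requires the interplay between the Kazdan--Warner alternative and the sharp three-manifold PSC classification; the remaining steps follow a standard rigidity template once flatness and spatial constancy of $|K_0|^2$ are in hand.
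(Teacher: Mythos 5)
The opening is correct and matches the paper's route: monotonicity of $t^{-1}\vol$ under $R \le 0$ forces $R \equiv 0$, the constraints then give $|K|^2 = H^2$, and the aspherical-summand hypothesis (via Schoen--Yau) rules out positive scalar curvature, so $h(t)$ is Ricci-flat and hence flat in three dimensions. Your use of the Kazdan--Warner trichotomy here is interchangeable with the paper's Ricci-flow argument; both boil down to ``scalar-flat plus no PSC metric $\Rightarrow$ Ricci-flat.'' Your derivation of $L \equiv 1/3$ from the lapse equation is also fine and parallels Proposition~\ref{2.34}.

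The genuine gap is the assertion that ``on a closed flat three-manifold, a transverse-traceless symmetric two-tensor is necessarily parallel,'' with the Fourier heuristic that only the constant mode survives the TT conditions. This is false. On $T^3$ with coordinates $(\theta_1,\theta_2,\theta_3)$, the tensor with nonzero components $K_{11}=-K_{22}=\cos\theta_3$, $K_{12}=K_{21}=\sin\theta_3$ is traceless, divergence-free (each $\partial_iK_{ij}$ vanishes), has constant $|K|^2$, and is not parallel. More generally, for each nonzero wavevector $k$ there is a two-dimensional space of TT polarizations perpendicular to $k$, so the constant mode is far from being the only survivor. Consequently, knowing only that a single slice is flat and $K^0$ is TT does not yield parallelism, and your ODE reduction to a Kasner metric does not go through from these facts alone. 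To repair the step you must use the evolution, not just the slice constraints. One way is the paper's route: from the pointwise matrix ODEs $\partial_t h = -2LK$ and $\partial_t K = LHK - 2LKh^{-1}K$, deduce $h^{-1}K = HM$ with $M$ time-independent, and then exploit that $h(t)$ stays flat on a whole time interval. Concretely, linearizing the Ricci tensor at the flat metric along the variation $\dot h = -2LK$ (which is divergence-free with spatially constant trace) gives $0 = \dot R_{ij} = -\tfrac12\Delta \dot h_{ij}$, so $K$ is harmonic and hence parallel on the closed flat manifold; this is the content hidden behind ``equation~\eqref{old1.3} means that $M$ is locally constant.'' Without invoking the persistence of flatness in time, the parallelism cannot be extracted, and your proof as written does not supply it.
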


  There is a natural rescaling (\ref{old1.36}) of a CMC Einstein flow.
  Using Theorem \ref{0.9}, one can show that
  if $R \le 0$ then as one approaches the singularity, there is
  Kasner-like geometry in an integral sense, relative to a limiting
  measure. Namely, put
  $\dvol_0 = \lim_{t \rightarrow 0} t^{-1} \dvol_{h(t)}$; 
  this limit exists as a measure, although it may be zero.
  Let $K$ denote the second fundamental form of the spatial hypersurfaces.
  
  \begin{theorem} \label{0.13}
  Suppose that $R \le 0$. Given $\Lambda > 1$, we have
\begin{equation} 
  \lim_{s \rightarrow 0} \left| L_s - \frac{1}{3} \right| =
  \lim_{s \rightarrow 0}
  \left| |K_s|^2 - \frac{9}{u^2} \right| = \lim_{s \rightarrow 0} |R_s| = 0
\end{equation}
in $L^1 \left( [\Lambda^{-1}, \Lambda] \times X, du \dvol_0 \right)$.
\end{theorem}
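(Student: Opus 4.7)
The three convergence statements reduce to two using the Hamiltonian constraint. Under the natural parabolic rescaling of (\ref{old1.36}) one has $R_s(u,x) = s^2 R(su,x)$, $|K_s|^2(u,x) = s^2 |K|^2(su,x)$, and $L_s(u,x) = L(su,x)$; the vacuum identity $|K|^2 = R + H^2$ with $H = -3/t$ then yields $|K_s|^2 - 9/u^2 = R_s$. So the second convergence is identical to the third, and the remaining task is to prove (a) $R_s \to 0$ and (b) $L_s \to 1/3$ in $L^1\left([\Lambda^{-1}, \Lambda] \times X,\, du\, \dvol_0\right)$.

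For (a), start from Theorem \ref{0.9}: under $R \le 0$ the integrand is pointwise nonnegative in
\begin{equation*}
\int_0^{t_0} \int_X (-t^2 R)\, L \cdot t^{-1} \dvol_{h(t)} \cdot t^{-1}\, dt < \infty.
\end{equation*}
Finiteness of this log-weighted integral forces its restriction to any window $t \in [s\Lambda^{-1}, s\Lambda]$ to vanish as $s \to 0$. Substituting $t = su$ (so $dt/t = du/u$) and invoking the defining property $t^{-1} \dvol_{h(t)} \to \dvol_0$, this rewrites as
\begin{equation*}
\int_{\Lambda^{-1}}^{\Lambda} \int_X u \cdot (-R_s)\, L_s \cdot (su)^{-1} \dvol_{h(su)}\, du \longrightarrow 0 \quad (s \to 0).
\end{equation*}
A uniform positive lower bound on $L_s$ (from the CMC lapse theory in Section \ref{sect3}) together with $u \ge \Lambda^{-1}$ then yields $\int\int |R_s|\, du\, \dvol_0 \to 0$, establishing (a). The middle statement of the theorem follows from the Hamiltonian identity above.

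For (b), apply the vacuum CMC lapse equation $-\Delta L + |K|^2 L = -\partial_t H = 3/t^2$, which in rescaled form reads $-\Delta_s L_s + |K_s|^2 L_s = 3/u^2$. Setting $w_s = L_s - 1/3$ and substituting $|K_s|^2 = 9/u^2 + R_s$ gives
\begin{equation*}
-\Delta_s w_s + |K_s|^2 w_s = -\tfrac{1}{3} R_s.
\end{equation*}
Testing with $w_s$ and integrating over $X$ produces a coercive estimate which, together with the lower bound $|K_s|^2 \gtrsim u^{-2}$ (from (a)) and Cauchy--Schwarz, yields $\|w_s\|_{L^2(\dvol_{h_s})} \lesssim \|R_s\|_{L^2(\dvol_{h_s})}$. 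Translating back to the target measure then gives $L_s \to 1/3$ in $L^1\left([\Lambda^{-1}, \Lambda] \times X,\, du\, \dvol_0\right)$.

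The main obstacle I anticipate is closing the estimate in (b): Theorem \ref{0.9} naturally provides $L^1$ rather than $L^2$ information on $R_s$, so either an auxiliary $L^\infty$ bound on $R_s$ (permitting interpolation) or a direct maximum-principle argument for the operator $-\Delta_s + |K_s|^2$ will be required. Either way, this step should draw on the scale-invariant curvature and lapse bounds developed elsewhere in Section \ref{sect3}, and on the relationship between $\dvol_{h_s}$ and $\dvol_0$ implicit in the rescaling convention.
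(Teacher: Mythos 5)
Your reduction via the Hamiltonian constraint is correct: since the constraint $R = |K^0|^2 - (1-\tfrac{1}{n})H^2$ is equivalent to $|K|^2 = R + H^2$, and $H_s^2 = 9/u^2$ under the rescaling, one indeed has $|K_s|^2 - 9/u^2 = R_s$ pointwise, so the middle convergence follows from the third. Your argument for $R_s \to 0$ is also essentially the intended one: the integrand in Theorem \ref{0.9} is nonnegative when $R\le 0$, so its tails over $t\in[s\Lambda^{-1},s\Lambda]$ vanish; after $t=su$ one uses $L_s\ge\tfrac13$ (Proposition \ref{2.30}), $u\ge\Lambda^{-1}$, and the monotonicity $(su)^{-1}\dvol_{h(su)}\ge c\,\dvol_0$ (Corollary \ref{2.31}) to pass to $\int\int|R_s|\,du\,\dvol_0\to 0$.

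Where you go off-route is step (b). You derive the rescaled lapse equation $-\triangle_s w_s + |K_s|^2 w_s = -\tfrac13 R_s$ and try an energy estimate, but as you yourself note the estimate won't close in $L^1$, and there is a further mismatch between $\dvol_{h_s}$ and the target measure $\dvol_0$ that makes the coercivity route unpleasant. The paper's argument for $L_s\to\tfrac13$ is instead exactly parallel to (a), and comes for free from the \emph{pointwise} monotonicity identity (\ref{2.20}). With $t$ the Hubble time, $\tfrac{1}{H^2}\tfrac{\partial H}{\partial t}=\tfrac1n$, so (\ref{2.20}) reads
\begin{equation*}
\frac{\partial}{\partial t}\left(\frac{n}{t}\,\dvol_{h(t)}\right) \;=\; \frac{n^2}{t^2}\left(L-\frac{1}{n}\right)\dvol_{h(t)},
\end{equation*}
and $L\ge\tfrac1n$ makes the right side nonnegative. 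Integrating in $t$ from $0$ to $t_0$ and then over $X$ gives
\begin{equation*}
\int_0^{t_0}\int_X \left(L-\frac{1}{n}\right)\frac{\dvol_{h(t)}}{t}\,\frac{dt}{t} \;<\;\infty
\end{equation*}
(indeed, by (\ref{2.20}) and (\ref{2.23}) this is just $n^{-2}$ times the finite quantity of Theorem \ref{0.9}). Now apply your same tail-and-substitution argument to this nonnegative integrand: the contribution from $t\in[s\Lambda^{-1},s\Lambda]$ tends to $0$, and after $t=su$, together with $u\le\Lambda$ and $(su)^{-1}\dvol_{h(su)}\ge c\,\dvol_0$, this gives $\int_{\Lambda^{-1}}^{\Lambda}\int_X |L_s - \tfrac13|\,du\,\dvol_0 \to 0$, using $L_s\ge\tfrac13$ to drop the absolute value. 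No elliptic estimate is needed, and the three convergences are really one and the same tail statement seen through the constraint identity.
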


  The analogy between $t^{-3} \vol(X, h(t))$ (for the expanding direction)
  and $t^{-1} \vol(X, h(t))$ (for the shrinking direction)
   is only partial.
  First, $t^{-1} \vol(X, h(t))$ is only monotonic when $R \le 0$. Second,
  $t^{-3} \vol(X, h(t))$ is invariant under rescaling,
  whereas $t^{-1} \vol(X, h(t))$ is not.

The remaing results of the paper involve a curvature assumption.
Let $|\Rm|_T$ denote the norm of the spacetime curvature, as given in
(\ref{old1.47}).
Following Ricci flow terminology, we define
a type-I Einstein flow to be a CMC Einstein flow
  for which there is some $C < \infty$ so that
  $|\Rm|_T \le C t^{-2}$ for all $t \in (0, t_0]$. We show that
  except for a clear counterexample, the normalized spatial diameters in a
  type-I Einstein flow go to infinity as $t \rightarrow 0$.

\begin{theorem} \label{0.15}
  Suppose that a type-I Einstein flow ${\mathcal E}$ satisfies
  $\liminf_{t \rightarrow 0} t^{-1} \diam(X, h(t)) < \infty$.
Then ${\mathcal E}$ is a
  Lorentzian cone over 
  a compact hyperbolic $3$-manifold.
\end{theorem}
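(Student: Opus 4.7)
My plan is a blow-down (rescaling) argument combining the Fischer--Moncrief monotonicity of $f(t):=t^{-3}\vol(X,h(t))$ with the companion Yamabe/Perelman lower bound on $f$.

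Fix a sequence $s_i \to 0$ realizing the liminf, so that $s_i^{-1}\diam(X, h(s_i)) \le D$ for some $D<\infty$, and apply the natural rescaling (\ref{old1.36}) to obtain Einstein flows $\mathcal E_{s_i}$. The type-I bound $|\Rm|_T \le C t^{-2}$ is scale-invariant, hence persists on each $\mathcal E_{s_i}$. The value of $f$ on $\mathcal E_{s_i}$ at rescaled time $t$ equals $f(s_i t)$ on $\mathcal E$; at $t=1$ this gives $f(s_i) \ge f(t_0) > 0$ by Fischer--Moncrief monotonicity. Together with the diameter bound $\diam \le D$ at $t=1$ and Ricci control from the type-I bound, Bishop--Gromov furnishes a matching volume upper bound, so that $A_\infty := \lim_{t\to 0^+} f(t) = \sup_t f(t) < \infty$. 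Volume, diameter, and Ricci bounds force non-collapse at every point of the rescaled slice at $t=1$, and the type-I bound propagates curvature control to every compact subinterval $[\tau_1,\tau_2] \subset (0,\infty)$ of the rescaled time axis. Cheeger--Gromov compactness then extracts a subsequential smooth limit Einstein flow $\mathcal E_\infty$ on $(0,\infty) \times Y$, with $Y$ diffeomorphic to $X$.

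For each fixed $t>0$, the functional on $\mathcal E_{s_i}$ at time $t$ equals $f(s_i t) \to A_\infty$, so on the limit $\mathcal E_\infty$ it is identically $A_\infty$. By the Fischer--Moncrief rigidity recalled before Theorem \ref{0.9}, $\mathcal E_\infty$ must therefore be a Lorentzian cone over a closed hyperbolic $3$-manifold $(Y, h_Y)$, and $A_\infty$ equals the hyperbolic volume $V_Y$ of $Y$. Since $X \cong Y$ is hyperbolic, its Perelman/Yamabe invariant is $\sigma(X) = -6 V_Y^{2/3}$, and the accompanying Anderson--Fischer--Moncrief lower bound \cite{Fischer-Moncrief (2002), Anderson (2001)} then yields $f(t) \ge (-\sigma(X)/6)^{3/2} = V_Y$ throughout $(0, t_0]$. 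Combined with the monotonicity upper bound $f(t) \le A_\infty = V_Y$, this forces $f \equiv V_Y$ on $(0,t_0]$; applying the Fischer--Moncrief rigidity once more, now to $\mathcal E$ itself, we conclude that $\mathcal E$ is a Lorentzian cone over a closed hyperbolic $3$-manifold.

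The main obstacle, I expect, is the Cheeger--Gromov compactness step: one must leverage a spacetime curvature bound $|\Rm|_T \le C t^{-2}$ (and nothing more) to obtain uniform spatial injectivity-radius control, uniform bounds on the lapse via its elliptic equation, and suitable convergence of $K$ and of the CMC time function, so that the limit is genuinely a smooth CMC vacuum Einstein flow on all of $(0,\infty) \times Y$. A secondary care point is that the volume lower bound $f(s_i) \ge f(t_0)$, combined with bounded diameter and Ricci control, must rule out collapse at every point (via Bishop--Gromov), not merely bound the total volume of the rescaled slice; without this, one could not identify $Y$ with $X$ and the final rigidity step would fail.
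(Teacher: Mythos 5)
Your proposal is correct and follows essentially the same route as the paper's argument (Proposition \ref{2.60} together with Corollary \ref{2.61}): rescale along a diameter-controlled sequence, use the noncollapsing from monotonicity of $t^{-3}\vol$ plus Bishop--Gromov to extract a compact limit, invoke Fischer--Moncrief rigidity on the limit to get a hyperbolic cone, then pin down $t^{-3}\vol \equiv V_Y$ by combining the monotone upper bound with Perelman's scalar-curvature volume estimate (your $\sigma$-invariant phrasing is equivalent to the paper's use of the constraint $R(t_0)\ge -6/t_0^2$), and apply rigidity again to ${\mathcal E}$. The compactness concern you flag is handled in the paper by Proposition \ref{old1.55}, which yields only a $W^{2,p}$/$C^{1,\alpha}$ limit (via Anderson and Chen--LeFloch), with the Fischer--Moncrief rigidity argument noted to go through at that regularity.
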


Let $B_{h(t)}(x,t)$ denote the time-$t$ spatial
metric ball of radius $t$ around
$x \in X$.
We say that a CMC Einstein flow ${\mathcal E}$ is {\em noncollapsed}
    if there is some $v_0 > 0$ so that for all $(t,x) \in
    (0, t_0] \times X$, we have
      $\vol \left( B_{h(t)}(x, t)  \right) \ge v_0 t^3$.
      Since we have mentioned the two dichotomies
      shrinking/expanding and collapsed/noncollapsed,
      let us clarify the difference.  As $H$ is negative,
      we are considering flows for which the volume of the time-$t$ slice is
      shrinking as $t \rightarrow 0$ and
      expanding as $t \rightarrow \infty$. In contrast, the notion of
      collapsed/noncollapsed is based on the {\em normalized} 
      volumes of
      metric balls in the time slices.  There are many examples of
      Einstein flows that are collapsed in the expanding direction, as
      discussed in \cite{Lott (2018)}. In contrast, Einstein flows tend to
      be noncollapsed in the shrinking direction.

      In this paper we focus on noncollapsed type-I Einstein flows.
      The motivation comes from
      looking at examples of 
      crushing singularities. There may be crushing
      singularities that are not type-I, or are type-I but collapsed.
      If there are such examples then the methods of 
      \cite[Sections 3 and 4]{Lott (2018)}
      would give some information about them.

      Noncollapsed type-I Einstein flows have the technical advantage that
      one can take rescaling limits.
      In view of the BKL conjectures
      \cite{BKL (1970),Belinski (2017)},
      the possible existence of particle horizons is relevant for
      understanding initial
      singularities. One question is whether there are
      distinct points $x_1, x_2 \in X$
      so that for $t$ sufficiently small, the causal pasts
      $J_-(x_1, t)$ and $J_-(x_2, t)$ are disjoint. In general, this need not
      be the case.  However, we show that except for a clear counterexample,
      if $t$ is small enough then there are many points whose causal pasts are
      mutually disjoint on a relatively long backward time interval. 

\begin{theorem} \label{0.16}
  Let ${\mathcal E}$ be a noncollapsed type-I CMC Einstein flow.
Then either
  \begin{enumerate}
      \item ${\mathcal E}$ is a
  Lorentzian cone over 
  a compact hyperbolic $3$-manifold, or
    \item Given
  $N \in \Z^+$, $\Lambda > 1$ and $x^\prime \in X$, there is some $\widehat{t} \in
  (0, t_0]$ with the following property.  Given $t \in (0, \widehat{t}]$,
      there are $N$ points $\{x_j\}_{j=1}^N$ in $X$, with $x_1 = x^\prime$, so that if $j \neq
      j^\prime$ then the causal pasts $J^-(x_j,t)$ and $J^-(x_{j^\prime},t)$
      are disjoint on the time interval $[\Lambda^{-1} t, t]$.
      \end{enumerate}
\end{theorem}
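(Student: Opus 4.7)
The plan is to combine Theorem \ref{0.15} with a bound on the spatial reach of past causal cones and a packing argument on the time-$t$ slice. If alternative (1) fails, the contrapositive of Theorem \ref{0.15} gives $\liminf_{t \to 0} t^{-1} \diam(X, h(t)) = \infty$, hence $t^{-1} \diam(X, h(t)) \to \infty$ as $t \to 0$. I would then produce $N$ well-separated points in $(X, h(t))$, one of which is $x^\prime$, where ``well-separated'' is measured against the spatial reach of causal pasts on the time interval $[\Lambda^{-1} t, t]$.

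The spatial reach bound is the technical core. In the CMC vacuum setting with $H = -3/t$, the evolution equation for the mean curvature gives a lapse equation of the form $- \Delta L + |K|^2 L = 3 t^{-2}$. Combined with the algebraic inequality $|K|^2 \ge H^2 / 3 = 3 t^{-2}$, the maximum principle yields $L \le 1$ on each slice. The type-I hypothesis, used together with the noncollapsing assumption, controls $|K|$ pointwise by $C / t$, so $|\partial_t h|_h = 2 L |K|_h \le 2 C / t$; integrating over $[\Lambda^{-1} t, t]$ gives the bilinear-form comparability
\[
\Lambda^{-2C} h(t) \;\le\; h(u) \;\le\; \Lambda^{2C} h(t), \qquad u \in [\Lambda^{-1} t, t].
\]
For any past-directed causal curve written in the CMC foliation as $\gamma(s) = (s, y(s))$ with $y(t) = x$ and $y(u) = z$, the causal condition gives $|\dot y|_{h(s)} \le L \le 1$, so
\[
d_{h(t)}(x, z) \;\le\; \int_u^t |\dot y|_{h(t)} \, ds \;\le\; \Lambda^C \int_u^t |\dot y|_{h(s)} \, ds \;\le\; \Lambda^C (t - u) \;\le\; \Lambda^C t.
\]
Setting $C(\Lambda) := \Lambda^C$, two applications of this bound together with the triangle inequality show that if $(u, z) \in J^-(x_j, t) \cap J^-(x_{j^\prime}, t)$ for some $u \in [\Lambda^{-1} t, t]$, then $d_{h(t)}(x_j, x_{j^\prime}) \le 2 C(\Lambda) t$.

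For the packing step, by the first paragraph I can choose $\widehat{t}$ small enough that $\diam(X, h(t)) \ge 6 N C(\Lambda) t$ for every $t \in (0, \widehat{t}]$. By the triangle inequality there exists $y \in X$ with $d_{h(t)}(x^\prime, y) \ge 3 N C(\Lambda) t$. Let $\sigma$ be a minimizing $h(t)$-geodesic from $x^\prime$ to $y$, parametrized by $h(t)$-arclength, and define $x_j = \sigma\bigl((j-1) \cdot 3 C(\Lambda) t\bigr)$ for $j = 1, \dots, N$. Then $x_1 = x^\prime$, and because $\sigma$ is minimizing we have $d_{h(t)}(x_j, x_{j^\prime}) = 3 C(\Lambda) t \, |j - j^\prime| > 2 C(\Lambda) t$ for $j \neq j^\prime$. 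The contrapositive of the spatial reach conclusion then forces $J^-(x_j, t)$ and $J^-(x_{j^\prime}, t)$ to be disjoint on $[\Lambda^{-1} t, t]$, which is alternative (2).

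The hard part is the uniform bound $|K| \le C / t$ used in the spatial reach step. While $L \le 1$ is a direct consequence of the CMC maximum principle, bounding $|K|$ in a scale-invariant way seems to require either the Gauss-Codazzi and constraint equations with elliptic regularity (invoking the noncollapsed hypothesis), or a compactness argument on rescaled Cheeger-Gromov limits, which is available precisely because we work in the noncollapsed type-I class. A subsidiary point will be justifying the passage from the ``mixed-time'' spatial length $\int_u^t |\dot y|_{h(s)} \, ds$ to the time-$t$ distance $d_{h(t)}$ through the metric comparability above.
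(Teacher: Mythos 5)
Your proof is correct in outline but takes a genuinely different route from the paper's. The paper first reduces to Corollary \ref{2.61} (your Theorem \ref{0.15}) exactly as you do, but then proves Proposition \ref{2.67} by a soft contradiction argument: suppose the claim fails along a sequence $t_i \to 0$, extract a pointed rescaling limit ${\mathcal E}^\infty$ (available thanks to Proposition \ref{old1.55}), use the uniformly bounded $C^{1,\alpha}$-geometry of ${\mathcal E}^\infty$ on $[\Lambda^{-1},1]$ to produce a single finite radius $R$ beyond which causal pasts on that interval are disjoint, pick $N$ points in the noncompact limit separated by $2R$, pull back via Gromov--Hausdorff approximants, and contradict the failing of case (2). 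Your argument instead tries to establish a \emph{uniform} quantitative spatial-reach bound $d_{h(t)}(x,z)\le \Lambda^C t$ for all small $t$, and then does a direct packing with explicit constants. Your dominant-energy/lapse estimate $L\le 1$ is fine (it is exactly (\ref{old1.14}) in Hubble time), the metric comparability on $[\Lambda^{-1}t,t]$ is fine given $|K|\le C/t$, and the geodesic packing is fine. The one place you flag as ``the hard part'' --- the pointwise bound $|K|\le C t^{-1}$ --- is indeed not an algebraic consequence of the type-I condition alone (Gauss and Codazzi bound $|K|^2$ terms and $\nabla K$ in terms of $|\Rm|_T$ and $R^h$, not $|K|$ itself). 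However, exactly as you suggest, it does follow from the Anderson / Chen--LeFloch regularity theory that the paper cites for Proposition \ref{old1.55}: type-I plus noncollapsing yields uniform $C^{1,\alpha}$ (or $W^{2,p}$) control on the rescaled flows, and $K$ is a first derivative of the metric, so $|K_s|$ is uniformly bounded, i.e.\ $|K|\le C t^{-1}$. So your proof is valid once that is invoked. What each approach buys: the paper's contradiction argument never needs any uniform pointwise estimate, only sequential compactness, and is therefore shorter and cleaner; your approach pays the extra cost of a uniform curvature-of-second-fundamental-form bound (sourced from the same compactness machinery) but returns explicit quantitative information, namely that the spatial reach on $[\Lambda^{-1}t,t]$ is at most $\Lambda^C t$ with $C$ the type-I constant, which in turn gives an explicit threshold $\widehat t$ in terms of when $t^{-1}\diam(X,h(t))$ exceeds $6N\Lambda^C$.
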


One can localize the preceding result to an
arbitrary open subset of $X$.

\begin{theorem} \label{0.17}
  Let ${\mathcal E}$ be a noncollapsed type-I CMC Einstein flow.
  Given $N \in \Z^+$, $\Lambda > 1$, $\epsilon > 0$, $\alpha \in (0,1)$,
  an open set $U \subset X$ and a point $x^\prime \in U$, there is
  some $\widehat{t} \in (0, t_0]$ with the following property. For $t \in (0, \widehat{t}]$, either
    \begin{enumerate}
    \item     The rescaled pointed flow ${\mathcal E}_{t}$ on $(X, x^\prime)$ is $\epsilon$-close in the
      pointed $C^{1,\alpha}$-topology to a Lorentzian cone 
      over a region in a hyperbolic $3$-manifold, having $U$
      as a bounded subset of the approximation region, or
 \item There are $N$ points $\{x_j\}_{=1}^N$ in $U$, with $x_1 = x^\prime$, so that if $j \neq
      j^\prime$ then the causal pasts $J^-(x_j,t)$ and $J^-(x_{j^\prime},t)$
      are disjoint on the time interval $[\Lambda^{-1} t, t]$.      
      \end{enumerate}
\end{theorem}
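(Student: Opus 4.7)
The plan is to argue by contradiction, using the pointed $C^{1,\alpha}$-compactness available for noncollapsed type-I Einstein flows to extract a rescaling limit at $x^\prime$, and then applying (the essentially local argument behind) Theorem \ref{0.16} to this limit.

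Suppose the theorem fails for some choice of $N$, $\Lambda$, $\epsilon$, $\alpha$, $U$, and $x^\prime$. Then one can find a sequence $t_k \downarrow 0$ at which both alternatives (1) and (2) fail. Consider the rescaled pointed flows $({\mathcal E}_{t_k}, x^\prime)$. By the type-I curvature bound and the noncollapsing hypothesis, these have uniformly bounded curvature and uniformly positive injectivity radius on bounded spacetime neighborhoods of the basepoint, so an Anderson-type compactness argument yields a pointed $C^{1,\alpha}$-convergent subsequence with limit $({\mathcal E}_\infty, x_\infty)$ on a (possibly non-compact) $3$-manifold $X_\infty$, defined on a time interval containing $[\Lambda^{-1},1]$. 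The limit flow is itself a noncollapsed type-I Einstein flow.

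Next, I would apply to $({\mathcal E}_\infty, x_\infty)$ the same dichotomy as in Theorem \ref{0.16}. Because the conclusion there is of a local nature around the basepoint, it adapts to the non-compact setting: either a neighborhood of $x_\infty$ is modeled on a Lorentzian cone over a region in a hyperbolic $3$-manifold, or there exist $N$ points $\{\tilde x_j\}_{j=1}^N$ in $X_\infty$ with $\tilde x_1 = x_\infty$ whose $[\Lambda^{-1},1]$-causal pasts are pairwise disjoint. To convert back to a statement on ${\mathcal E}_{t_k}$, I would separate two cases depending on how $U$ sits in the rescaled metric. If the rescaled image of $U$ eventually lies in a fixed ball about $x^\prime$, then the first alternative of the limit dichotomy combined with the pointed $C^{1,\alpha}$-convergence yields alternative (1) of the theorem for large $k$, contradicting its failure. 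If the rescaled image of $U$ is unbounded, then the $N$ points $\tilde x_j$ produced by the second alternative can be chosen inside the open set $U_\infty \subset X_\infty$ obtained as the limit of the rescaled copies of $U$ (since $U_\infty$ is an open neighborhood of $x_\infty$, and the disjoint-causal-past construction can be localized to any prescribed neighborhood of $x_\infty$); the approximating diffeomorphisms then produce $N$ points in $U$ for large $k$ whose $[\Lambda^{-1} t_k, t_k]$-causal pasts are disjoint, contradicting the failure of (2).

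The main obstacle is verifying that the dichotomy from Theorem \ref{0.16} still applies to the non-compact pointed limit $({\mathcal E}_\infty, x_\infty)$. The proof of Theorem \ref{0.16} presumably combines the Fischer--Moncrief-type monotonicity of $t^{-3}\vol$ with the rigidity in Theorem \ref{0.9}, together with a packing argument based on noncollapsing and type-I; one has to check that none of these steps relies on compactness of the underlying manifold, so that they remain valid on $X_\infty$. A secondary technical point is the bookkeeping needed to ensure the constructed $N$-tuple can be placed inside the prescribed open set $U$ rather than only somewhere in $X_\infty$, and that the notion of ``$U$ bounded in the approximation region'' in alternative (1) matches precisely what pointed $C^{1,\alpha}$-convergence provides.
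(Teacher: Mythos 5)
Your high-level scaffold (contradiction, type-I compactness to extract a rescaling limit, case split on whether the rescaled copy of $U$ stays bounded) is aligned with the paper's proof of Proposition \ref{2.70}. But the mechanism you propose for disposing of the bounded case has a real gap, and your bookkeeping conflates two distinct case splits.

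You propose to apply ``the same dichotomy as in Theorem~\ref{0.16}'' to the noncompact limit $({\mathcal E}_\infty, x_\infty)$, and then argue that if the rescaled $U$ is bounded, the first alternative of that dichotomy holds, while if it is unbounded, the second does. But the dichotomy says ``either a Lorentzian cone, or $N$ disjoint causal pasts''; it does not say which one holds as a function of the size of $U$. Nothing in your argument excludes, say, the limit being noncompact and non-conical while $U$ happens to be bounded. More fundamentally, Theorem~\ref{0.16} as proved in the paper rests on Corollary~\ref{2.61}, which uses Mostow rigidity for a \emph{compact} hyperbolic 3-manifold and the normalized-volume monotonicity on the compact slice; this does not transfer to the noncompact pointed limit $X_\infty$ merely because ``the conclusion is local.''

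What the paper actually does in the bounded case is entirely different and does not invoke Theorem~\ref{0.16} at all. If $t_i^{-1}\diam(U, h(t_i))$ stays bounded, then the rescaled image of $U$ lies in a fixed ball, hence $t_i^{-n}\vol(U, h(t_i))$ is uniformly bounded above. Since the Fischer--Moncrief density $t^{-n}\dvol_{h(t)}$ is pointwise nonincreasing in $t$, the monotone convergence theorem produces some $\widetilde{x} \in U$ with $\lim_{t\to 0} t^{-n}\dvol_{h(t)}(\widetilde{x})$ finite and positive. Feeding this into the strong maximum principle argument of \cite[Proposition 3.5]{Lott (2018)} forces the limit flow to be a Lorentzian cone over a region of an Einstein metric, which then yields alternative~(1) for the original flow by $C^{1,\alpha}$-convergence. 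This step is the genuine content missing from your proposal; it is a volume-monotonicity plus rigidity argument, not an application of the compact dichotomy. The unbounded case is handled as you describe, by running the localized version of Proposition~\ref{2.67}.
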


There is also a measure theoretic version 
(Proposition \ref{2.71}).

The structure of the paper is the following. In Section \ref{sect2} we
prove
Theorems \ref{0.3} and \ref{0.6}.  In Section \ref{sect3} we prove the
remaining theorems.
More detailed descriptions are at the
beginnings of the sections.

I thank the referees for helpful comments.

\section{Torus symmetries} \label{sect2}

In this section we prove the results about $T^N$-actions.
In Subsection \ref{subsect2.1} we recall results about the geometry of
spacetimes with free isometric $T^N$-actions
 (possibly globally twisted).  In Subsection
\ref{subsect2.3} we prove Theorem \ref{0.3} and in Subsection
\ref{subsect2.4} we prove Theorem \ref{0.6}.

\subsection{Geometric setup} \label{subsect2.1}

We begin with the geometric setup of
\cite[Section 4.1]{Lott (2010)}, to which we refer for more details.
Let ${\mathcal G}$ be an $N$-dimensional abelian Lie group, with Lie algebra
${\frak g}$.
Let ${\frak E}$
be a local system on $B$ of Lie groups isomorphic to ${\mathcal G}$.
There is a corresponding flat ${\frak g}$-vector bundle $e$ on $B$;
see \cite[Section 4.1]{Lott (2010)}.

Let $M$ be the total space of an
${\frak E}$-twisted principal ${\mathcal G}$-bundle
with base $B$, in the sense of \cite[Section 4.1]{Lott (2010)}.
(An example is when ${\frak E}$ is the constant local system and
$M$ is the total space of a $T^N$-bundle on $B$.)
We write $\dim(B) = n+1$ and $\dim(M) = m = N+n+1$.

Let
$\overline{g}$ be a Lorentzian metric on $M$ with a
free local isometric ${\frak E}$-action. We assume that the
induced metrics on the ${\frak E}$-orbits are Riemannian.
In adapted coordinates, we can write
\begin{equation} \label{A.1}
  \overline{g} \: = \: \sum_{I,J=1}^N G_{IJ} \: (dx^I + A^I) (dx^J + A^J) \: + \:
  \sum_{\alpha, \beta = 1}^{n+1} g_{\alpha \beta} \: db^\alpha db^\beta.
\end{equation}
Here
$G_{IJ}$ is the local expression of a Euclidean inner product on
$e$,
$\sum_{\alpha, \beta = 1}^{n+1} g_{\alpha \beta} \: db^\alpha db^\beta$ is
the local expression of a Lorentzian metric $g_B$ on $B$ and
$A^I = \sum_{\alpha} A^I_\alpha db^\alpha$ are the components of
a local $e$-valued $1$-form describing a connection $A$ on the
twisted ${\frak G}$-bundle $M \rightarrow B$.

Put $F^I_{\alpha \beta} = \partial_\alpha A^I_\beta -
\partial_\beta A^I_\alpha$.
At a given point $b \in B$, we can assume that $A^I(b) = 0$.
We write
\begin{equation} \label{A.2}
  G_{IJ;\alpha \beta} \: = \: G_{IJ,\alpha \beta} \: - \:
  \Gamma^{\sigma}_{\: \: \alpha \beta} \: G_{IJ, \sigma},
\end{equation}
where $\{\Gamma^{\sigma}_{\: \: \alpha \beta}\}$ are the
Christoffel symbols for the metric $g_{\alpha \beta}$ on $B$.

From \cite[Section 4.2]{Lott (2010)},
the Ricci tensor of ${\overline g}$ on $M$
is given in terms of the curvature tensor
$R_{\alpha \beta \gamma \delta}$ of $B$, the $2$-forms $F^I_{\alpha \beta}$
and the metrics $G_{IJ}$ by
\begin{align} \label{A.3}
  \overline{R}_{IJ}^{\overline{g}}
  \:  =  \: & - \: \frac12 \: g^{\alpha \beta} \:
  G_{IJ; \alpha \beta} \: - \: \frac14 \: g^{\alpha \beta} \:
  G^{KL} \: G_{KL, \alpha} \: G_{IJ, \beta} \: + \:
  \frac12 \: g^{\alpha \beta} \: G^{KL} \: G_{IK, \alpha} \:
  G_{LJ, \beta} \: + \\
  &  \frac14 \: g^{\alpha \gamma} \: g^{\beta \delta} \:
  G_{IK} \: G_{JL} \: F^K_{\alpha \beta} \: F^L_{\gamma \delta} \notag \\
  \overline{R}_{I \alpha}^{\overline{g}}
  \:  =  \: & \frac12 \: g^{\gamma \delta} \:
  G_{IK} \: F^K_{\alpha \gamma; \delta} \: + \: \frac12 \: g^{\gamma \delta} \:
  G_{IK, \gamma} \: F^K_{\alpha \delta} \: + \: \frac14 \:
  g^{\gamma \delta} \: G_{Im} \: G^{KL} \: G_{KL, \gamma} \: F^m_{\alpha \delta}
  \notag \\
  \overline{R}_{\alpha \beta}^{\overline{g}}
  \:  =  \: & R_{\alpha \beta}^g \: - \:
  \frac12 \: G^{IJ} \: G_{IJ; \alpha \beta} \: + \: \frac14 \:
  G^{IJ} \: G_{JK,\alpha} \: G^{KL} \: G_{LI,\beta} \: - \:
  \frac12 \: g^{\gamma \delta} \: \: G_{IJ} \: F^I_{\alpha \gamma} \:
  F^J_{\beta \delta}. \notag
\end{align}
The scalar curvature is
\begin{align} \label{A.4}
  \overline{R}^{\overline{g}} \: = \: &
  R^g \: - \: g^{\alpha \beta} G^{IJ}  \: G_{IJ; \alpha \beta} \: + \:
  \frac34 \: g^{\alpha \beta} \: G^{IJ} \: G_{JK, \alpha} \: G^{KL} \:
  G_{LI, \beta} \\
  & \: - \: \frac14 \: g^{\alpha \beta} \: G^{IJ} \:
  G_{IJ, \alpha} \: G^{KL} \: G_{KL, \beta} \: -
  \: \frac14 \:
  g^{\alpha \gamma} \: g^{\beta \delta} \: G_{IJ} \:
  F^I_{\alpha \beta} \: F^J_{\gamma \delta}. \notag
\end{align}

In what follows we will assume that the flat vector bundle $e$ has
holonomy in $\SL(N, \R)$, so that $\ln \det G$ is globally defined
on $B$. We have
\begin{equation} \label{A.5}
  \nabla_\alpha \ln \det G = G^{IJ} G_{IJ, \alpha}
\end{equation}
and
\begin{equation} \label{A.6}
  \triangle_g \ln \det G =
  g^{\alpha \beta} G^{IJ} G_{IJ; \alpha \beta} -
  g^{\alpha \beta} G^{IJ} G_{JK, \alpha} G^{KL} G_{LK, \beta}.
\end{equation}
Writing
\begin{equation} \label{A.7}
  |F|^2 = G_{IJ} g^{\alpha \beta} g^{\gamma \delta} F^I_{\alpha \gamma}
  F^J_{\beta \delta},
\end{equation}
the first equation in (\ref{A.3}) gives
\begin{equation} \label{A.8}
  G^{IJ} \overline{R}_{IJ} =
  - \frac12 \triangle_g \ln \det G - \frac14 g^{\alpha \beta}
  (\nabla_\alpha \ln \det G) (\nabla_\beta \ln \det G) + \frac14 |F|^2.
\end{equation}
Note that $|F|^2$ need not be
nonnegative.

Given a foliation of $B$ by
compact spacelike hypersurfaces $Y$, we can write the metric $g$ on $B$ as
\begin{equation} \label{A.9}
  g = - L^2 dt^2 + \sum_{i,j=1}^n h_{ij} dy^i dy^j.
\end{equation}
Here $L = L(y,t)$ is the lapse function and we have performed
spatial diffeomorphisms to kill the shift vectors.

Suppose hereafter that
$\det G$ is spatially constant, i.e. only depends on $t$
\cite{Berger-Chrusciel-Isenberg-Moncrief (1997)}.
Then
\begin{equation} \label{A.10}
  g^{\alpha \beta}
  (\nabla_\alpha \ln \det G) (\nabla_\beta \ln \det G) =
  - \: L^{-2} ( \partial_t \ln \det G )^2
\end{equation}
and
\begin{equation} \label{A.11}
  \triangle_g \ln \det G =
  - \: \frac{1}{L \sqrt{\det h}} \partial_t
  \left( L^{-1} \sqrt{\det h} (\partial_t \ln \det G) \right).
\end{equation}

If $\overline{R}^{\overline{g}}_{IJ} = 0$ then (\ref{A.8}) becomes
\begin{equation} \label{A.12}
  \partial_t \left( L^{-1} \sqrt{\det G} (\partial_t \ln \det G) \sqrt{\det h} \right) +
  \frac12 L \sqrt{\det G} |F|^2 \sqrt{\det h} = 0.
\end{equation}
More invariantly,
\begin{equation} \label{1.13} 
  \partial_t \left( L^{-1} \sqrt{\det G} (\partial_t \ln \det G) \dvol_{h(t)} \right)  = 
  - \: \frac12 L \sqrt{\det G} |F|^2 \dvol_{h(t)}.
\end{equation}
In particular, if
$F = 0$ then 
\begin{equation} \label{1.14}
\mu =   L^{-1} \sqrt{\det G} (\partial_t \ln \det G) \dvol_{h(t)}
  \end{equation}
is a $t$-independent smooth positive density on $Y$.

We suppose in the rest of this section that $\dim(B) = 2$, i.e. $\dim(Y) = 1$.
We write $g$ locally (in $Y$) as $- L^2 dt^2 + h dy^2$.

\subsection{Gowdy spacetime} \label{subsect2.3}

In this subsection we assume that $F=0$.
(This is automatic, for
example,
if $X$ is a three dimensional $\Sol$-manifold \cite[p. 2288]{Lott (2018)}.)
Let $\mu$ be the $t$-independent density on $Y$ defined in (\ref{1.14}).

Put
\begin{align} \label{A.15}
  {\mathcal E}(t) = & \int_{Y} \left[ h^{-1} \Tr \left( \left( G^{-1}
    \frac{\partial G}{\partial y} \right)^2 \right) + L^{-2}
    \Tr \left( \left( G^{-1}
    \frac{\partial G}{\partial t} \right)^2 \right) \right] L \dvol \\
  = & \int_{Y} \left[ L h^{- \: \frac12} \Tr \left( \left( G^{-1}
    \frac{\partial G}{\partial y} \right)^2 \right) + L^{-1} h^{\frac12}
    \Tr \left( \left( G^{-1}
    \frac{\partial G}{\partial t} \right)^2 \right) \right] dy. \notag
\end{align}
Still assuming that $F=0$,
if $\overline{R}^{\overline{g}}_{IJ} = 0$ then
equation (\ref{A.3}) gives the matrix equation
\begin{align} \label{A.17}
  & - L^{-2} (G^{-1} G_{tt} - G^{-1} G_t G^{-1} G_t) +
  h^{-1} (G^{-1} G_{yy} - G^{-1} G_y G^{-1} G_y) + \\
  & L^{-3} L_t G^{-1} G_t + L^{-1} h^{-1} L_y G^{-1} G_y
  - \frac12 L^{-2} h^{-1} h_t G^{-1} G_t - \notag \\
  & \frac12
  h^{-2} h_y G^{-1} G_y - \frac12 L^{-2} (\ln \det G)_t G^{-1} G_t = 0. \notag
\end{align}
Suppose that
$(\ln \det G)_t > 0$.
Using (\ref{A.17}) and the $t$-independence of $\mu$, one finds

\begin{align} \label{A.18}
  \frac{d{\mathcal E}}{dt}  = &
  \int_Y \left( \frac{(\ln \det G)_{tt}}{(\ln \det G)_{t}} + \frac12 (\ln \det G)_t \right) L h^{- \: \frac12}
  \Tr \left( \left( G^{-1} G_y \right)^2 \right) \: dy + \\
  & \int_Y \left( \frac{(\ln \det G)_{tt}}{(\ln \det G)_{t}} - \frac12 (\ln \det G)_t \right) L^{-1} h^{\frac12}
  \Tr \left( \left( G^{-1} G_t \right)^2 \right) \: dy \notag \\
  = &
  \int_Y \left( \frac{(\ln \det G)_{tt}}{(\ln \det G)_{t}} + \frac12 (\ln \det G)_t \right) L h^{- 1}
  \Tr \left( \left( G^{-1} G_y \right)^2 \right) \: \dvol + \notag \\
  & \int_Y \left( \frac{(\ln \det G)_{tt}}{(\ln \det G)_{t}} - \frac12 (\ln \det G)_t \right) L^{-1} 
  \Tr \left( \left( G^{-1} G_t \right)^2 \right) \: \dvol. \notag
\end{align}

A scale invariant quantity that is monotonically nonincreasing in $t$ is given by
\begin{equation} \label{1.18} 
  \widehat{{\mathcal E}}(t) = \frac{1}{(\ln \det G)_t \sqrt{\det G}}
          {\mathcal E}(t).
\end{equation} 
Using (\ref{A.18}), one finds
\begin{equation} \label{1.19} 
  \frac{d\widehat{{\mathcal E}}}{dt} \: = \: - \: \frac{1}{\sqrt{\det G}}
  \int_{Y} L^{-1} \Tr \left(
  \left( G^{-1} G_t \right)^2 \right) \dvol.
\end{equation}

Since $\widehat{{\mathcal E}}$ is nonincreasing in time, it can be used to
understand the long time behavior of a Gowdy solution.  In order to
understand the short time behavior, we want a quantity that is
monotonically nondecreasing in time.  To find such a quantity,
note that the right-hand sides of (\ref{A.15}) and (\ref{1.19}) have a
roughly similar term.  This suggests using a different prefactor of
${\mathcal E}$, as compared to (\ref{1.18}).
For this reason, we put
\begin{equation} \label{A.19}
  \widetilde{{\mathcal E}}(t) = \frac{\sqrt{\det G}}{(\ln \det G)_t}
          {\mathcal E}(t).
\end{equation}
Using (\ref{A.18}), one finds
\begin{equation} \label{A.20}
  \frac{d\widetilde{{\mathcal E}}}{dt} \: = \: \sqrt{\det G}
  \int_{Y} L h^{-1} \Tr \left(
  \left( G^{-1} G_y \right)^2 \right) \dvol.
\end{equation}
Hence $\widetilde{{\mathcal E}}$
is monotonically nondecreasing in $t$.
Note that the right-hand side of (\ref{1.19}) involves a time derivative,
whereas the right-hand side of (\ref{A.20}) involves a spatial derivative.

\begin{remark}
  If $N = 2$ and the Gowdy spacetime is polarized then the expression
  $\widetilde{{\mathcal E}}$ from (\ref{A.19}) is essentially the same
  as the expression $\epsilon^{(1)}$ from
  \cite[(19)]{Isenberg-Moncrief (1990)}.
  \end{remark}

\begin{remark} \label{1.22}
  We correct a couple of equations in \cite{Lott (2018)}.
  Equation (A.18) should not have the $\frac12$ on the right-hand side.
  The right-hand side of (A.20) should be multiplied by two.
\end{remark}

As $\widetilde{{\mathcal E}}$
is nonnegative and nondecreasing in $t$, it follows from (\ref{A.20}) that
for any $t_0 > 0$,
\begin{equation} \label{1.23}
  \int_0^{t_0} \int_Y \sqrt{\det G}
 L h^{-1} \Tr \left(
  \left( G^{-1} G_y \right)^2 \right) \dvol \: dt \: < \: \infty.
\end{equation}
As we will use the fact that $G^{- \: \frac12} G_y G^{- \: \frac12}$ is a symmetric matrix, we rewrite (\ref{1.23}) as
\begin{equation} \label{1.24}
  \int_0^{t_0} \int_Y \sqrt{\det G}
 L h^{-1} \Tr \left(
  \left( G^{- \: \frac12} G_y G^{- \: \frac12} \right)^2 \right) \dvol \: dt \: < \: \infty.
\end{equation}

We can rewrite (\ref{A.17}) as
\begin{equation} \label{1.25}
  \partial_t \left( \sqrt{\det G} L^{-1} h^{\frac12} G^{-1} G_t \right) =
  \partial_y \left(  \sqrt{\det G} L h^{- \: \frac12} G^{-1} G_y \right).
\end{equation}
Let $\sigma$ be a self-adjoint endomorphism of the vector bundle $e$,
with compact support in $(0, t_0) \times Y$. Locally,
\begin{equation} \label{1.26}
  \sigma^T = G \sigma G^{-1}.
\end{equation}
We note that $G^{-1} G_t$ and  $G^{-1} G_y$ are self-adjoint in this sense.
We write $\sigma_t$ for $\partial_t \sigma$, and similarly for $\sigma_y$.
Multiplying (\ref{1.25}) by $\sigma$, taking the trace and integrating by parts gives
\begin{align} \label{1.27}
& \int_0^{t_0} \int_Y \Tr \left(  \sigma_t \sqrt{\det G} L^{-1} h^{\frac12} (G^{-1} G_t) \right) dy \: dt = \\
& \int_0^{t_0} \int_Y \Tr \left(  \sigma_y  \sqrt{\det G} L h^{- \: \frac12} (G^{-1} G_y) \right)  dy \: dt. \notag
\end{align}
In terms of the density $\mu$ from (\ref{1.14}), this says
\begin{align} \label{1.28}
& \int_0^{t_0} \int_Y \frac{1}{(\ln \det G)_t} \Tr \left(  \sigma_t  (G^{-1} G_t) \right) d\mu \: dt = \\
& \int_0^{t_0} \int_Y \sqrt{\det G} L h^{- 1} \Tr \left(  \sigma_y (G^{-1} G_y) \right)  \dvol \: dt. \notag
\end{align}
Now
\begin{align} \label{1.29}
& \Tr \left(  \sigma_y (G^{-1} G_y) \right) = \\
  & \Tr \left(  (G^{\frac12} \sigma_y G^{- \: \frac12}) (G^{- \: \frac12}
  G_y G^{- \: \frac12}) \right) = \notag \\
  & \Tr \left( \frac12 \left(   (G^{\frac12} \sigma_y G^{- \: \frac12}) +
  (G^{\frac12} \sigma_y G^{- \: \frac12})^T \right) (G^{- \: \frac12}
  G_y G^{- \: \frac12}) \right) \notag,
\end{align}
where we use the symmetry of $G^{- \: \frac12}
  G_y G^{- \: \frac12}$ in the last step.
  Differentiating (\ref{1.26}) with respect to $y$, one can check that
  \begin{align} \label{1.30}
    \frac12 \left(   (G^{\frac12} \sigma_y G^{- \: \frac12}) +
    (G^{\frac12} \sigma_y G^{- \: \frac12})^T \right) = \: &
    \frac12 \left(   (G^{\frac12} \sigma_y G^{- \: \frac12}) +
    (G^{-\frac12} \sigma_y^T G^{\frac12}) \right) \\ = \: &
    G^{\frac12} (D_y \sigma) G^{- \: \frac12}, \notag
    \end{align}
  where
  \begin{equation} \label{1.31}
    D_y \sigma = \sigma_y + \frac12 \left[ G^{-1} G_y, \sigma \right].
  \end{equation}
  From (\ref{1.30}),  $G^{\frac12} (D_y \sigma) G^{- \: \frac12}$ is symmetric, which implies that
  $D_y \sigma$ is self-adjoint in the sense of (\ref{1.26}).
  Combining (\ref{1.28}), (\ref{1.29}) and (\ref{1.30}) gives
  \begin{align} \label{1.32}
& \int_0^{t_0} \int_Y \frac{1}{(\ln \det G)_t} \Tr \left(  \sigma_t  (G^{-1} G_t) \right) d\mu \: dt = \\
    & \int_0^{t_0} \int_Y \sqrt{\det G} L h^{- 1} \Tr \left(  (G^{\frac12} (D_y \sigma) G^{- \: \frac12})
    (G^{- \:  \frac12} G_y G^{\frac12}) \right)  \dvol \: dt. \notag
  \end{align}
  
By the Cauchy-Schwarz inequality,
and letting $c$ denote the square root of the left-hand side of (\ref{1.24}), we have
\begin{align} \label{1.33}
  &  \left|
  \int_0^{t_0} \int_Y \sqrt{\det G} L h^{- 1} \Tr \left(  (G^{\frac12} (D_y \sigma) G^{- \: \frac12})
    (G^{- \:  \frac12} G_y G^{\frac12}) \right)  \dvol \: dt
  \right| \le \\
  & c \: \sqrt{
\int_0^{t_0} \int_Y \sqrt{\det G} L h^{- 1} \Tr \left(  (G^{\frac12} (D_y \sigma) G^{- \: \frac12})^2  \right)  \dvol \: dt
  } = \notag \\
  & c \: \sqrt{
\int_0^{t_0} \int_Y \sqrt{\det G} L h^{- 1} \Tr \left(  (D_y \sigma)^2  \right)  \dvol \: dt
  }. \notag 
\end{align}

We can write
\begin{align} \label{1.34}
&  \int_0^{t_0} \int_Y \sqrt{\det G} L h^{- 1} \Tr \left(  (D_y \sigma)^2  \right)  \dvol \: dt \: = \\
&    \int_0^{t_0} \int_Y (\det G) (\ln \det G)_t \frac{\Tr \left(  (D_y \sigma)^2  \right) \: dy^2}{\mu}  \: dt. \notag
\end{align}
To put this in a more invariant way, write
\begin{equation} \label{1.35}
\Tr \left( D_Y \sigma \otimes D_Y \sigma  \right) =
\Tr \left(  (D_y \sigma)^2  \right) \: dy^2,
\end{equation}
a $2$-density on $Y$.  Then
$\frac{\Tr \left( D_Y \sigma \otimes D_Y \sigma   \right)}{\mu}$ is a density on $Y$ and (\ref{1.32})-(\ref{1.35}) give
\begin{align} \label{1.36}
  &  \left|
\int_0^{t_0} \int_Y \frac{1}{(\ln \det G)_t} \Tr \left(  \sigma_t (G^{-1} G_t) \right) d\mu \: dt
  \right| \le \\
  & c \: \sqrt{
    \int_0^{t_0} \int_Y (\det G)
    (\ln \det G)_t \frac{\Tr \left( D_Y \sigma \otimes D_Y \sigma   \right)}{\mu}  \: dt
  }, \notag
  \end{align}

We choose the time parameter $t$ so that $\det G = t^N$. Then
(\ref{1.36}) becomes
\begin{equation} \label{1.37}
 \left|
\int_0^{t_0} \int_Y t \Tr \left(  \sigma_t  (G^{-1} G_t) \right) d\mu \: dt
  \right| \le \\
N^{\frac32} c \: \sqrt{
  \int_0^{t_0} t^N
  \int_Y \frac{\Tr \left( D_Y \sigma \otimes D_Y \sigma   \right)}{\mu}  \: \frac{dt}{t}
  }.
\end{equation}
Next, we define $\tau$ by $t = e^{- \: \tau}$, so that
approaching the singularity corresponds to $\tau \rightarrow \infty$.
Define $\tau_0$ by $t_0 = e^{- \: \tau_0}$. Then (\ref{1.37}) becomes
\begin{equation} \label{1.38}
 \left|
\int_{\tau_0}^\infty \int_Y \Tr \left(  \sigma_\tau  (G^{-1} G_\tau) \right) d\mu \: d\tau
  \right| \le \\
N^{\frac32} c \: \sqrt{
  \int_{\tau_0}^\infty
  e^{- N \tau}
  \int_Y \frac{\Tr \left( D_Y \sigma \otimes D_Y \sigma   \right)}{\mu}  \: d\tau
  },
\end{equation}
or
\begin{equation} \label{1.39}
 \left|
\int_{\tau_0}^\infty \int_Y \Tr \left(  \sigma  (G^{-1} G_\tau)_\tau \right) d\mu \: d\tau
  \right| \le \\
N^{\frac32} c \: \sqrt{
  \int_{\tau_0}^\infty
  e^{- N \tau}
  \int_Y \frac{\Tr \left( D_Y \sigma \otimes D_Y \sigma   \right)}{\mu}  \: d\tau
  },
\end{equation}

From (\ref{1.32}), $(G^{-1} G_\tau)_\tau$ is orthogonal to $\Ker(D_Y)$ at all times $\tau$.
Define a time-$\tau$ norm $\| \cdot \|_{H^{-1}_{Y,\tau}}$ on 
square-integrable self-adjoint sections of $e$ by
\begin{equation} \label{1.40}
   \| \eta \|_{H^{-1}_{Y,\tau}} =  \sup_{\widehat{\eta} \neq 0}
\left( \int_Y \Tr \left(  \eta \widehat{\eta} \right) d\mu \right)
  \Big/
\sqrt{
      \int_Y \left( \Tr(\widehat{\eta}^2) d\mu + \frac{\Tr \left( D_Y \widehat{\eta} \otimes D_Y \widehat{\eta}   \right)}{\mu}
      \right)
}.
\end{equation}
where $\widehat{\eta}$ ranges over smooth time-$\tau$ self-adjoint sections of $e$.
Note that $\| \cdot \|_{H^{-1}_{Y,\tau}}$ depends on $G(\tau)$ through the notion of self-adjointness,
but only depends on $L(\tau)$ and $h(\tau)$ through the $\tau$-independent density $\mu$.
Let $H^{-1}_{Y,\tau}$ be the metric completion with respect to $\| \cdot \|_{H^{-1}_{Y,\tau}}$.
Let ${\mathcal H}$ be the weighted Hilbert space of measurable maps $f$ with
$f(\tau) \in H^{-1}_{Y,\tau}$ such that
\begin{equation} \label{1.41}
\int_0^\infty e^{N\tau}  \| f(\tau) \|_{H^{-1}_{Y,\tau}}^2 \: d\tau < \infty.
\end{equation}

\begin{proposition} \label{1.42}
\begin{equation} \label{1.43}
  \int_{\tau_0}^\infty e^{N\tau} \|
  (G^{-1} G_\tau)_\tau \|^2_{H^{-1}_{Y,\tau}} \: d\tau < \infty.
\end{equation}
\end{proposition}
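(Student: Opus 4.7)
The plan is to prove (\ref{1.43}) directly from the integrated estimate (\ref{1.39}) via Hilbert-space duality in weighted Bochner spaces.

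First, I would set up the duality framework. For each $\tau$, let $V_\tau$ denote the completion of smooth self-adjoint sections of $e$ over $Y$ under the Hilbert norm
\[
\|\sigma\|_{V_\tau}^2 \: = \: \int_Y \Tr(\sigma^2) \, d\mu \: + \: \int_Y \frac{\Tr(D_Y\sigma \otimes D_Y\sigma)}{\mu}.
\]
Inspection of (\ref{1.40}) shows that $H^{-1}_{Y,\tau}$ is isometric to the dual $V_\tau^*$, with the pairing $(\eta,\hat\eta) \mapsto \int_Y \Tr(\eta\hat\eta) \, d\mu$. Integrating in $\tau$, the weighted Bochner space ${\mathcal H}$ of (\ref{1.41}) is canonically identified with the dual of
\[
{\mathcal K} \: = \: L^2\bigl((\tau_0,\infty),\ e^{-N\tau} d\tau\, ;\ V_\tau\bigr),
\]
under the time-space pairing $\langle \eta, \sigma\rangle = \int_{\tau_0}^\infty \int_Y \Tr(\eta \sigma) \, d\mu \, d\tau$. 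Consequently, the finiteness assertion (\ref{1.43}) is equivalent to the statement that, for $\eta = (G^{-1}G_\tau)_\tau$, this pairing extends to a bounded linear functional on ${\mathcal K}$.

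Next, I would invoke (\ref{1.39}) itself. For any smooth self-adjoint $\sigma$ compactly supported in $(\tau_0,\infty) \times Y$, the right-hand side of (\ref{1.39}) involves only the $D_Y\sigma$ piece of the $V_\tau$-norm, so it is dominated by $N^{3/2} c \, \|\sigma\|_{\mathcal K}$. The constant $c$ is finite by (\ref{1.24}), which follows from the monotonicity relation (\ref{A.20}) for $\widetilde{\mathcal E}$. Thus (\ref{1.39}) is exactly the dual bound
\[
\left| \int_{\tau_0}^\infty \int_Y \Tr\left( \sigma (G^{-1}G_\tau)_\tau \right) \, d\mu \, d\tau \right| \: \le \: N^{3/2} c \, \|\sigma\|_{\mathcal K}
\]
on the dense subset of compactly supported smooth test sections. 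Note that the orthogonality of $(G^{-1}G_\tau)_\tau$ to $\Ker(D_Y)$ in each fibre (observed just before (\ref{1.40})) is built into this estimate: elements of $\Ker(D_Y)$ annihilate both sides.

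A standard density argument (cutoff in $\tau$ and mollification, using smoothness of the coefficients on $(0,t_0]$) extends this bound to all of ${\mathcal K}$, and the duality identification yields $(G^{-1}G_\tau)_\tau \in {\mathcal H}$ with $\|(G^{-1}G_\tau)_\tau\|_{{\mathcal H}}^2 \le N^3 c^2 < \infty$, which is (\ref{1.43}). The only mild technical item is to verify density of compactly supported smooth sections in ${\mathcal K}$ together with the identification ${\mathcal H}={\mathcal K}^*$; both are routine consequences of the Hilbertian structure on $V_\tau$, so no substantive analytic obstacle remains beyond (\ref{1.39}), which has already been established via the monotonicity of $\widetilde{\mathcal E}$ and Cauchy--Schwarz.
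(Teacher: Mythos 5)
Your proposal is correct and is essentially the argument the paper intends: the paper's one-line proof (``Equation (\ref{1.39}) implies that $(G^{-1} G_\tau)_\tau \in {\mathcal H}$'') is exactly the duality observation you spell out, namely that (\ref{1.39}) exhibits $(G^{-1}G_\tau)_\tau$ as a bounded linear functional on the weighted Bochner space ${\mathcal K}$ dual to ${\mathcal H}$, with norm at most $N^{3/2}c$. Your side remark about $\Ker(D_Y)$ is not strictly needed for the bound (the RHS of (\ref{1.39}) is already controlled by the full $V_\tau$-norm since it only involves the $D_Y$ seminorm), but it does no harm; otherwise you have simply filled in the routine density and Riesz-representation steps that the paper leaves implicit.
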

\begin{proof}
  Equation (\ref{1.39}) implies that $(G^{-1} G_\tau)_\tau \in
  {\mathcal H}$.  The theorem follows.
  \end{proof}

\subsection{NonGowdy spacetime} \label{subsect2.4}

We now assume that $F \neq 0$.
If $\overline{R}_{I \alpha}^{\overline{g}} = 0 $ then
from the second equation in (\ref{A.3}), one finds that the
$\R^N$-valued vector
\begin{equation} \label{A.23}
  C_I = L^{-1} h^{- \: \frac12} \: \sqrt{\det G} \: G_{IK} F^K_{ty}
\end{equation}
is locally constant on the two dimensional spacetime.  More
precisely, it is a locally constant section of the flat vector bundle
$e^*$ (using our assumption that $e$ is unimodular).

We now restrict to the case when $N = 2$ and the flat $\R^2$-bundle $e$
has holonomy ${\mathcal T}$, around the circle $Y$, lying in $\SL(2, \R)$.
When ${\mathcal T} = \Id$, the components of $C$ are
called the ``twist quantities'' in
\cite{Berger-Chrusciel-Isenberg-Moncrief (1997)} and subsequent papers
such as \cite{LeFloch-Smulevici (2015)}.
We mostly follow the notation of \cite[p. 1256-1283]{LeFloch-Smulevici (2015)},
with coordinates $(R, \theta)$ for the two dimensional base. We use
linear coordinates
$x^1, x^2$ for the $\R^2$-fiber.  In that paper, $R = \sqrt{\det G}$ and $\theta$
is the local
coordinate for the spacelike hypersurface $Y$.  The coordinates
$x^1$ and $x^2$ are chosen so that $C_1 = 0$ and $C_2 = K$, where $K$ is
a positive constant. The Lorentzian metric on
$(0, \infty) \times Y$ can be written as
\begin{equation} \label{A.24}
  g = e^{2(\eta - U)} (-dR^2 + a^{-2} d\theta^2) +
  e^{2U} (dx^1 + Adx^2 + (G+AH) d\theta)^2 +
  e^{-2U}R^2(dx^2+Hd\theta)^2.
\end{equation}
The variables $\eta$, $U$, $a$, $A$, $G$ and $H$ are functions of $R$ and
$\theta$.
To relate to (\ref{A.1}), the inner product $G_{IJ}$ is
\begin{equation} \label{1.46}
\begin{pmatrix}
  e^{2U} & e^{2U} A \\
  e^{2U} A & e^{2U} A^2 + e^{-2U} R^2
\end{pmatrix}
\end{equation}
and the connection $A^I$ is
\begin{equation} \label{1.47}
\begin{pmatrix}
  G d\theta \\
  H d\theta
  \end{pmatrix}.
\end{equation}

The analog of the density $\mu$ from (\ref{1.14}) is
$2 a^{-1} \: d\theta$; it is no longer independent of the
time parameter $R$.

Put
\begin{equation} \label{A.25}
  {\mathcal D} =
  a^{-1} U_R^2 + a U_\theta^2 + \frac14 R^{-2} e^{4U}
  (a^{-1} A_R^2 + a A_\theta^2)
\end{equation}
and
\begin{equation} \label{A.26}
  \widehat{\mathcal E}_K(R) = \int_{Y} \left( {\mathcal D}
  + \frac14 K^2 R^{-4} e^{2\eta} a^{-1} \right) d\theta.
\end{equation}
Then from \cite[p. 1283]{LeFloch-Smulevici (2015)},
\begin{equation} \label{A.27}
  \frac{d\widehat{\mathcal E}_K}{dR} =
  - 2 R^{-1} \int_{Y} \left( a^{-1} U_R^2 + \frac14
  R^{-2} e^{4U} a A_\theta^2 \right) d\theta - K^2 R^{-5}
  \int_{Y} a^{-1} e^{2 \eta} \: d\theta.
\end{equation}
(There were some incorrect terms in \cite[(A.25) and (A.27)]{Lott (2018)}.)

Put
\begin{equation} \label{1.51}
  \widetilde{\mathcal E}_K = R^2 \widehat{\mathcal E}_K + \frac12 K \int_Y H \: d\theta.
\end{equation}
As seen from (\ref{1.47}), the term $\int_Y H \: d\theta$ is the holonomy of the twist component
of the connection $A^I$.

\begin{proposition} \label{1.52} We have
  \begin{equation} \label{1.53}
     \frac{d\widetilde{\mathcal E}_K}{dR} =
2R \int_Y \left( a U_\theta^2 + \frac14 R^{-2} e^{4U} a^{-1} A_R^2 \right) \: d\theta.
\end{equation}
    \end{proposition}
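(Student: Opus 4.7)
The plan is to write $\widetilde{\mathcal E}_K = R^2 \widehat{\mathcal E}_K + \tfrac12 K \int_Y H \: d\theta$ and differentiate each summand separately in $R$. The expected mechanism is that the already-stated monotonicity formula (\ref{A.27}) produces, after multiplying by $R^2$ and adding $2R \widehat{\mathcal E}_K$, exactly the desired right-hand side plus one leftover twist term in $K^2$, which is then cancelled by $\tfrac12 K \frac{d}{dR}\int_Y H \: d\theta$ once $H_R$ is computed from the twist-constancy equation (\ref{A.23}).

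For the first summand I would write $\frac{d}{dR}(R^2 \widehat{\mathcal E}_K) = 2R \widehat{\mathcal E}_K + R^2 \frac{d\widehat{\mathcal E}_K}{dR}$ and substitute (\ref{A.26}) and (\ref{A.27}). The $2R \int_Y a^{-1} U_R^2 \: d\theta$ terms from the two pieces cancel, and likewise the $\tfrac12 R^{-1} \int_Y e^{4U} a A_\theta^2 \: d\theta$ terms cancel, leaving exactly $2R \int_Y \bigl( a U_\theta^2 + \tfrac14 R^{-2} e^{4U} a^{-1} A_R^2 \bigr) \: d\theta$ from ${\mathcal D}$. The surviving $K^2$-contribution, from $2R \cdot \tfrac14 K^2 R^{-4} \int_Y a^{-1} e^{2\eta} \: d\theta$ minus $K^2 R^{-3} \int_Y a^{-1} e^{2\eta} \: d\theta$, is $- \tfrac12 K^2 R^{-3} \int_Y a^{-1} e^{2\eta} \: d\theta$.

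For the second summand I would compute $H_R$ from (\ref{A.23}) with the identifications forced by (\ref{A.24}): the base Lorentzian metric is $e^{2(\eta-U)}(-dR^2 + a^{-2} d\theta^2)$, so $L = e^{\eta-U}$ and $h^{1/2} = e^{\eta-U} a^{-1}$; the determinant of (\ref{1.46}) is $R^2$, so $\sqrt{\det G} = R$; and the only nonvanishing curvature components are $F^1_{R\theta} = G_R$, $F^2_{R\theta} = H_R$ by (\ref{1.47}). Thus the prefactor $L^{-1} h^{-1/2} \sqrt{\det G}$ in (\ref{A.23}) equals $a R e^{-2(\eta-U)}$. The condition $C_1 = 0$ gives $G_R + A H_R = 0$, and then $C_2 = K$ reduces, after the $e^{2U}$-block cancels, to $e^{-2\eta} a R^3 H_R = K$, i.e.\ $H_R = K R^{-3} a^{-1} e^{2\eta}$. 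Integrating over $Y$ and multiplying by $\tfrac12 K$ produces precisely $+ \tfrac12 K^2 R^{-3} \int_Y a^{-1} e^{2\eta} \: d\theta$, which cancels the leftover from the previous paragraph.

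The only real obstacle is bookkeeping: correctly reading off $L$, $h$, $\sqrt{\det G}$, and the $F^K_{R\theta}$ from the block decomposition (\ref{A.24})--(\ref{1.47}) so that the formula (\ref{A.23}) for the twist invariants specialises cleanly. Once $H_R$ has been extracted in closed form, the proposition follows by simple addition.
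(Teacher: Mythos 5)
Your proposal is correct and follows essentially the same route as the paper: differentiate $R^2 \widehat{\mathcal E}_K$ using (\ref{A.26})--(\ref{A.27}) to isolate the leftover $-\tfrac12 K^2 R^{-3} \int_Y a^{-1} e^{2\eta}\,d\theta$, then cancel it with $\tfrac12 K \tfrac{d}{dR}\int_Y H\,d\theta$ using $H_R = K R^{-3} a^{-1} e^{2\eta}$. The only cosmetic difference is that the paper cites LeFloch--Smulevici (4.28) for the $H_R$ formula, whereas you re-derive it from the twist-constancy relation (\ref{A.23}) by reading $L$, $h$, $\sqrt{\det G}$ and $F^K_{R\theta}$ off the block decomposition (\ref{A.24})--(\ref{1.47}); your computation there is correct, and this makes the proof a bit more self-contained.
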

\begin{proof}
  Using (\ref{1.51}) and (\ref{1.53}),
  \begin{align} \label{1.54}
    \frac{d}{dR} \left( R^2 \widehat{\mathcal E}_K \right) = & 
    R^2 \left( \frac{d\widehat{\mathcal E}_K}{dR} + 2 R^{-1} \widehat{\mathcal E}_K \right) \\
    = & 
    2R \int_Y \left( a U_\theta^2 + \frac14 R^{-2} e^{4U} a^{-1} A_R^2 \right) \: d\theta
    - \: \frac12 K^2 R^{-3}
    \int_{Y} a^{-1} e^{2 \eta} \: d\theta. \notag
  \end{align}
  From \cite[(4.28)]{LeFloch-Smulevici (2015)},
  \begin{equation} \label{1.55}
    \frac{\partial H}{\partial R} = K R^{-3} a^{-1} e^{2 \eta}.   
  \end{equation}
  The proposition follows.
  \end{proof}

Proposition \ref{1.52} is also valid if the holonomy
${\mathcal T}$ is such that
${\mathcal T}^{-T}$ is unipotent; c.f. \cite[Section A.3.2]{Lott (2018)}.

Suppose that $\int_Y H \: d\theta$ is uniformly bounded below as $R \rightarrow 0$.
Then $\widetilde{\mathcal E}_K$ is uniformly bounded below and 
Proposition \ref{1.52} implies that
\begin{equation} \label{1.56}
 \int_0^{R_0} \int_Y  \left( R a U_\theta^2 + \frac14 R^{-1} e^{4U} a^{-1} A_R^2 \right) \: d\theta \: dR \: < \: \infty.
  \end{equation}

From \cite[(4.22)]{LeFloch-Smulevici (2015)},
\begin{equation} \label{1.57}
(Ra^{-1}U_R)_R - (RaU_\theta)_\theta = \frac12 R^{-1} e^{4U} \left( a^{-1} A_R^2 - a A_\theta^2 \right).
\end{equation}
Let $\sigma$ be a smooth function with compact support in $(0,R_0) \times S^1$. Multiplying (\ref{1.57}) by $\sigma$ and
integrating gives
\begin{align} \label{1.58}
  &  \int_0^{R_0} \int_Y \sigma \left( (Ra^{-1}U_R)_R
  \: - \: \frac12 R^{-1} e^{4U} a^{-1} A_R^2 +
  \frac12 R^{-1} e^{4U} a A_\theta^2  \right) \: d\theta \: dR = \\
  &  \int_0^{R_0} \int_Y \sigma (RaU_\theta)_\theta \: d\theta \: dR \: = \: 
- \: \int_0^{R_0} \int_Y \sigma_\theta RaU_\theta \: d\theta \: dR.  \notag
\end{align}
Letting $c$ denote the square root of the left-hand side of (\ref{1.56}), the Cauchy-Schwarz inequality gives
\begin{align} \label{1.59}
  & \left| \int_0^{R_0} \int_Y \sigma \left( a(Ra^{-1}U_R)_R
   \: - \: \frac12 R^{-1} e^{4U} A_R^2 +
  \frac12 R^{-1} e^{4U} a^2 A_\theta^2\right) \: a^{-1} d\theta \: dR \right| \le \\
&  c \: \sqrt{
    \int_0^{R_0} \int_Y  R a^2 \sigma_\theta^2 \: a^{-1} d\theta \: dR}. \notag
\end{align}

Changing variable by $R = e^{- \tau}$ gives
\begin{align} \label{1.60}
  & \left| \int_{\tau_0}^\infty \int_Y \sigma \left( a(a^{-1}U_\tau)_\tau
   \: - \: \frac12 e^{2 \tau} e^{4U} A_\tau^2 +
   \frac12 e^{4U} a^2 A_\theta^2\right) \: a^{-1} d\theta \: d\tau
   \right| \le \\
&  c \: \sqrt{
     \int_{\tau_0}^\infty \int_Y  e^{- 2 \tau} a^2 \sigma_\theta^2 \: a^{-1}
     d\theta \: d\tau}. \notag
\end{align}

Define a time-$\tau$ norm $\| \cdot \|_{H^{-1}_{Y,\tau}}$ on 
$L^2(Y; a^{-1} d\theta)$ by
\begin{equation} \label{1.61}
   \| \sigma \|_{H^{-1}_{Y,\tau}} = \sup_{\widehat{\sigma} \neq 0}
   \left( \int_Y \sigma \widehat{\sigma} a^{-1} d\theta \right)
  \Big/
\sqrt{
  \int_Y \left( \widehat{\sigma}^2  + a^2 \widehat{\sigma}_\theta^2
  \right) a^{-1} d\theta
},
\end{equation}
where $\widehat{\sigma}$ ranges over smooth functions on $Y$.
Let $H^{-1}_{Y,\tau}$ be the metric completion with respect to
$\| \cdot \|_{H^{-1}_{Y,\tau}}$.
Let ${\mathcal H}$ be the weighted Hilbert space of measurable maps $f$ with
$f(\tau) \in H^{-1}_{Y,\tau}$ such that
\begin{equation} \label{1.62}
\int_{\tau_0}^\infty e^{2\tau}  \| f(\tau) \|_{H^{-1}_{Y,\tau}}^2 \: d\tau < \infty.
\end{equation}
Then (\ref{1.60}) implies that
\begin{equation} \label{1.63}
a(a^{-1}U_\tau)_\tau
   \: - \: \frac12 e^{2 \tau} e^{4U} A_\tau^2 +
   \frac12 e^{4U} a^2 A_\theta^2 \in {\mathcal H}.
   \end{equation}

The AVTD asymptotics for $U$ are that $a(a^{-1}U_\tau)_\tau
\: - \: \frac12 e^{2 \tau} e^{4U} A_\tau^2$ goes to zero as
$\tau \rightarrow \infty$
\cite{Ames-Beyer-Isenberg-LeFloch (2013),Clausen-Isenberg (2007)}.
The next theorem gives a sufficient condition for
AVTD asymptotics to hold for $U$, in an integral sense.

\begin{proposition} \label{1.64}
If $\int_{S^1} H \: d\theta$ is bounded below as $\tau \rightarrow \infty$ and
\begin{equation} \label{1.65}
  e^{4U} a^2 A_\theta^2 \in {\mathcal H}
\end{equation}
then 
\begin{equation} \label{1.66}
a(a^{-1}U_\tau)_\tau
\: - \: \frac12 e^{2 \tau} e^{4U} A_\tau^2 \in {\mathcal H}.
\end{equation}
\end{proposition}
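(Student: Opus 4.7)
The plan is to derive Proposition \ref{1.64} as a direct corollary of the already-established inclusion (\ref{1.63}) together with the additional hypothesis (\ref{1.65}), using only the fact that $\mathcal{H}$ is a vector space.

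First I would note that the derivation culminating in (\ref{1.63}) makes essential use of the assumption that $\int_Y H\,d\theta$ is bounded below as $\tau \to \infty$: this lower bound, inserted into the monotonicity formula of Proposition \ref{1.52}, is what makes $\widetilde{\mathcal E}_K$ uniformly bounded below and produces the finite integral (\ref{1.56}), whose square root is the constant $c$ driving the Cauchy--Schwarz estimate (\ref{1.59}). Under the first hypothesis of Proposition \ref{1.64}, therefore, the inclusion (\ref{1.63}) is in force and gives
\[
a(a^{-1}U_\tau)_\tau - \tfrac12 e^{2\tau} e^{4U} A_\tau^2 + \tfrac12 e^{4U} a^2 A_\theta^2 \in \mathcal{H}.
\]

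Next, the additional hypothesis (\ref{1.65}) gives $e^{4U} a^2 A_\theta^2 \in \mathcal{H}$, and multiplying by the scalar $\tfrac12$ preserves this. The space $\mathcal{H}$ defined by (\ref{1.62}) is a weighted $L^2$-type Hilbert space of $H^{-1}_{Y,\tau}$-valued measurable maps, hence in particular a real vector space closed under subtraction. Subtracting the two elements above yields
\[
a(a^{-1}U_\tau)_\tau - \tfrac12 e^{2\tau} e^{4U} A_\tau^2 \in \mathcal{H},
\]
which is precisely the conclusion (\ref{1.66}).

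There is no substantive obstacle remaining: the heavy lifting was already performed in the monotonicity argument of Proposition \ref{1.52} and in the Cauchy--Schwarz step (\ref{1.58})--(\ref{1.60}) that delivered (\ref{1.63}). The role of hypothesis (\ref{1.65}) is simply to move the spatial-derivative term $\tfrac12 e^{4U} a^2 A_\theta^2$ across the $\mathcal{H}$-inclusion so as to isolate the AVTD combination. Viewed this way, the content of the proposition is that whenever the spatial-derivative obstruction $e^{4U} a^2 A_\theta^2$ is itself small in the weighted $H^{-1}$ sense encoded by $\mathcal{H}$, the bound coming from the evolution identity (\ref{1.57}) automatically degenerates to an AVTD-type statement for $U$.
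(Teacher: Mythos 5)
Your proposal is correct and follows the same route as the paper: the paper's proof is the one-liner ``This follows from (1.63),'' and you have simply unpacked it, correctly noting that the lower bound on $\int_Y H\,d\theta$ is what makes (1.63) available, and that hypothesis (1.65) then lets you subtract off $\tfrac12 e^{4U} a^2 A_\theta^2$ in the vector space $\mathcal H$.
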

\begin{proof}
This follows from  (\ref{1.63}).
\end{proof}

\begin{remark} \label{1.67}
The formal large-$\tau$ asymptotics from
\cite[(3.8)-(3.13)]{Ames-Beyer-Isenberg-LeFloch (2013)} say
\begin{align} \label{1.68}
  U(\tau,\theta) \sim & - \: \frac{1-k(\theta)}{2} \tau +
    U_{\star \star}(\theta) + \ldots, \\
  A(\tau,\theta) \sim &  A_{\star}(\theta)
  + A_{\star \star}(\theta) e^{-2k(\theta)\tau}
  + \ldots, \notag \\
  a(\tau,\theta) \sim & a_{\star}(\theta) + \ldots \notag \\
    H(\tau,\theta) \sim & H_{\star}(\theta) + \ldots \notag
    \end{align}
where $k(\theta)$ determines the Kasner parameters.
Without further assumptions, one sees that
(\ref{1.65}) should not always hold. On the other hand,
if we assume that $A_{\star}$ is constant in
$\theta$, the half-polarized condition, then
$e^{4U} a^2 A_\theta^2 \sim e^{- 2 \tau}$ and
(\ref{1.65}) holds. This is consistent with the finding in
\cite{Ames-Beyer-Isenberg-LeFloch (2013)} that the half-polarized
condition is needed for the Fuchsian method to work.

Without the half-polarized condition, it appears from (\ref{1.63})
that the right generalization of the AVTD asymptotics for $U$ would
be that $a(a^{-1}U_\tau)_\tau
   \: - \: \frac12 e^{2 \tau} e^{4U} A_\tau^2 +
   \frac12 e^{4U} a^2 A_\theta^2$ goes to zero as $\tau \rightarrow
   \infty$. For general $T^2$-symmetric vacuum spacetimes,
   numerics indicate a Mixmaster-type behavior
   \cite{Berger-Isenberg-Weaver (2001)}.  Of the two
   conditions in Proposition \ref{1.64}, we do not know which ones are violated
   in such a case.
\end{remark}

\section{CMC Einstein flows} \label{sect3}

In this section we consider expanding vacuum spacetimes with a 
CMC foliation. In Subsection \ref{subsect3.1}
we discuss the quantity $t^{-1} \vol(X, h(t))$, and prove
Theorems \ref{0.9}, \ref{0.12} and \ref{0.13}.
In Subsection \ref{subsect3.2} we define
noncollapsing type-I Einstein flows and their
rescalings.  Subsection \ref{subsect3.3} has the proof of
Theorem \ref{0.15}. In Subsection \ref{subsect3.4} we prove Theorems
\ref{0.16} and \ref{0.17}. Subsection \ref{subsect3.5}
has an improvement of Theorem \ref{0.13} in the case
of a noncollapsed type-I Einstein flow.

\subsection{Monotonic quantities} \label{subsect3.1}

\begin{definition} \label{old1.1}
  Let $I$ be an interval in $\R$.
  An Einstein flow
  ${\mathcal E}$ on an $n$-dimensional manifold $X$ is given by a
  family of nonnegative functions $\{L(t)\}_{t \in I}$ on $X$,
  a family of Riemannian metrics $\{h(t)\}_{t \in I}$ on $X$, and a family
  of symmetric covariant $2$-tensor fields $\{K(t)\}_{t \in I}$ on $X$,
  so that if
  $H = h^{ij} K_{ij}$ and $K^0 = K - \frac{H}{n} h$ then
  the constraint equations
  \begin{equation} \label{old1.2}
    R - |K^0|^2 + \left( 1 - \frac{1}{n} \right) H^2 =  0
  \end{equation}
  and
  \begin{equation} \label{old1.3}
    \nabla_i K^i_{\: \: j} - \nabla_j H =  0,
  \end{equation}
  are satisfied, along with the evolution equations
  \begin{equation} \label{old1.4}
    \frac{\partial h_{ij}}{\partial t} = - 2 L K_{ij}
  \end{equation}
  and
  \begin{equation} \label{old1.5}
    \frac{\partial K_{ij}}{\partial t} =  L H K_{ij} - 2 L
    h^{kl} K_{ik} K_{lj} - L_{;ij} + L R_{ij}.
  \end{equation}
\end{definition}

For now, we will assume that $X$ is compact and connected, and
that all of the data
is smooth.
At the moment, $L$ is unconstrained; it will be determined by
the elliptic equation (\ref{old1.13}) below.
We will generally want $L(t)$ to be positive.

An Einstein flow gives rise to a Ricci-flat Lorentzian metric
\begin{equation} \label{old1.6}
  g = - L^2 dt^2 + h(t)
\end{equation}
on $I \times X$, for which the second fundamental form of the
time-$t$ slice is $K(t)$. Conversely, given a
Lorentzian metric $g$ on a manifold with a proper time function $t$,
we can write it in the
form (\ref{old1.6}) by using the flow of $\frac{\nabla t}{|\nabla t|^2}$ to
identify nearby leaves.
Letting $K(t)$ be the second fundamental form of
the time-$t$ slice, the metric $g$ is Ricci-flat if and only if
$(L,h,K)$ is an Einstein flow.

\begin{definition} \label{2.7} \cite{Eardley-Smarr (1979), Marsden-Tipler (1980)}
There is a {\em crushing singularity} as $t \rightarrow 0$ if
there is a
sequence of compact Cauchy surfaces going out the end at
$\{0\} \times X$ whose mean curvatures approach $- \infty$ uniformly.
\end{definition}

From \cite{Gerhardt (1983)}, if there is a crushing singularity then
there is a foliation near the end by
constant mean curvature (CMC) compact spatial hypersurfaces, whose mean
curvatures approach $- \infty$.

\begin{definition} \label{old1.7}
  A CMC Einstein flow is an Einstein flow
  for which $H$ only depends on $t$.
\end{definition}

We will assume that there is a crushing singularity as $t \rightarrow 0$;
in particular, $X$ is compact.
So we can assume we have a CMC Einstein flow with $I = (0, t_0]$
    (or $I = (0, t_0)$), and that $H$ is
    monotonically increasing in $t$
    and takes all values in
    $(- \infty, H_0)$ for some $H_0 < 0$.

    \begin{example} \label{2.9}
      We give some relevant examples of crushing singularities.
            \begin{enumerate}
      \item
        A Lorentzian cone over a Riemannian Einstein $n$-manifold
        $(X,h_{Ein})$ with
        Einstein constant $- (n-1)$. The metric is
\begin{equation} \label{2.10}
  g = - dt^2 + t^2 h_{Ein}.
\end{equation}
\item The product of the previous example, in dimension $n - n^\prime$, with
  a flat torus $(T^{n^\prime}, h_{flat})$. The metric is
  \begin{equation} \label{2.11}
  g = - dt^2 + t^2 h_{Ein} + h_{flat}.
  \end{equation}
\item A Kantowski-Sachs solution with $X$ diffeomorphic to
  $S^2 \times S^1$. The metric is a $\Z$-quotient of the interior of the
  event horizon in a Schwarzschild solution, after switching the
  usual $t$ and $r$ variables:
  \begin{equation} \label{2.12}
    g = - \:
    \frac{1}{\frac{2m}{t}-1} dt^2 + \left( \frac{2m}{t} - 1\right) dr^2
    + t^2 g_{S^2}.
  \end{equation}
  Here $t \in (0, 2m)$ and the $\Z$-quotienting is in the $r$-variable.
\item A Kasner solution on a flat $n$-manifold.  After possibly passing
  to a finite cover of $X$, the metric is
  \begin{equation} \label{2.13}
    g = - \: \frac{1}{n^2} dt^2 + (d \vec{x})^T t^{2M} d\vec{x}.
  \end{equation}
  Here $M$ is a symmetric $(n \times n)$-matrix with $\Tr(M) = \Tr(M^2) = 1$.
  We have written the metric so that $t \: = \: - \: \frac{n}{H}$. Then
  \begin{equation} \label{2.14}
    L = \frac{1}{n}, \: \: \: \: \: \:
    R = 0, \: \: \: \: \: \:
    |K|^2 = H^2 = \frac{n^2}{t^2}.
    \end{equation}
            \end{enumerate}
    \end{example}
    \noindent
    {\it End of example.} \\
    
Returning to general expanding CMC Einstein flows,
equation (\ref{old1.5}) gives
\begin{align} \label{old1.13}
  \frac{\partial H}{\partial t} = & - \triangle_h L + LH^2
  + LR \\
  = & - \triangle_h L + L |K^0|^2 + \frac{1}{n} LH^2. \notag
\end{align}
The maximum principle gives
\begin{equation} \label{old1.14}
  L(t)
  \le \frac{n}{H^2} \frac{\partial H}{\partial t}.
\end{equation}

\begin{proposition} \label{old1.15} \cite{Fischer-Moncrief (2002)}
  Let ${\mathcal E}$ be an expanding CMC Einstein flow.
  The quantity $(-H)^n \vol(X,h(t))$ is monotonically nonincreasing
  in $t$. It is constant in $t$ if and only if, taking
  $t = - \: \frac{n}{H}$, the Einstein flow ${\mathcal E}$ is
  a Lorentzian cone over a Riemannian Einstein manifold with
  Einstein constant $-(n-1)$.
  \end{proposition}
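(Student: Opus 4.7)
The plan is to differentiate $(-H)^n \vol(X, h(t))$ directly and massage the result, using the CMC hypothesis and equation (\ref{old1.13}), into a manifestly nonpositive expression.

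First I would compute the volume variation using $\partial_t h_{ij} = -2LK_{ij}$. Taking the trace gives $\partial_t \ln \det h = -2LH$, so
\[
\frac{d}{dt}\vol(X, h(t)) = -H \int_X L \, \dvol_h.
\]
Combined with $\frac{d}{dt}(-H)^n = -n(-H)^{n-1} H_t$, this yields
\[
\frac{d}{dt}\bigl[(-H)^n \vol(X, h(t))\bigr] = (-H)^{n-1} \Bigl( -n H_t \vol(X, h(t)) + H^2 \int_X L \, \dvol_h \Bigr).
\]
The next step is to eliminate the $H^2 \int L \, \dvol_h$ term using equation (\ref{old1.13}). Since the CMC hypothesis makes $H_t$ spatially constant, integrating (\ref{old1.13}) over the closed manifold $X$ kills the Laplacian of $L$ and produces
\[
H_t \vol(X, h(t)) = \int_X L |K^0|^2 \, \dvol_h + \frac{H^2}{n} \int_X L \, \dvol_h.
\]
Substituting this back, the $H^2 \int L \, \dvol_h$ contributions cancel and I am left with
\[
\frac{d}{dt}\bigl[(-H)^n \vol(X, h(t))\bigr] = - n (-H)^{n-1} \int_X L |K^0|^2 \, \dvol_h,
\]
which is $\leq 0$ since $L > 0$, $(-H) > 0$, and $|K^0|^2 \geq 0$. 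This establishes monotonicity.

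For the rigidity case, if the quantity is constant on an interval then $L|K^0|^2 \equiv 0$, and positivity of $L$ forces $K^0 \equiv 0$, so $K = \frac{H}{n} h$. The Hamiltonian constraint (\ref{old1.2}) then gives $R = -\frac{n-1}{n} H^2$, which is spatially constant. Equation (\ref{old1.13}) reduces to $-\triangle_h L + \frac{H^2}{n} L = H_t$ with both sides spatial constants; since $-\triangle_h + \frac{H^2}{n}$ is a coercive self-adjoint elliptic operator, its unique solution is the spatial constant $L = \frac{n H_t}{H^2}$, which normalizes to $L \equiv 1$ under the time parameterization $t = -n/H$ (for which $H_t = H^2/n$). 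With $L \equiv 1$ and $K = -\frac{1}{t} h$, the evolution equation (\ref{old1.4}) becomes $\partial_t h = \frac{2}{t} h$, yielding $h(t) = t^2 h_{Ein}$ for a fixed Riemannian metric $h_{Ein}$ on $X$. Feeding everything back into the $K$-evolution equation (\ref{old1.5}) produces $\Ric(h_{Ein}) = -(n-1) h_{Ein}$, identifying the flow as the Lorentzian cone over an Einstein manifold with the stated Einstein constant.

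The main computation poses no serious obstacle; the step that most needs care is the rigidity argument, where one must extract the full Einstein condition on $h_{Ein}$ from the tensorial $K$-evolution rather than settling for the scalar curvature identity delivered by the constraint. A minor point is verifying that the maximum principle in (\ref{old1.14}), which is the pointwise shadow of the integral identity above, is consistent with the equality case, and that the normalization $t = -n/H$ is globally compatible with the CMC foliation assumption.
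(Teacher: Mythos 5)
Your proof is correct, and it arrives at the same key identity as the paper, namely
\[
  \frac{d}{dt}\bigl[(-H)^n \vol(X,h(t))\bigr] \;=\; -\,n\,(-H)^{n-1}\int_X L\,|K^0|^2\,\dvol_h,
\]
which is equation (\ref{added}). The route is, however, slightly different from the one the paper sketches: the paper first establishes a \emph{pointwise} monotonicity statement, (\ref{old1.16}), via the maximum-principle inequality (\ref{old1.14}), and only then notes the integral identity (\ref{added}). You instead integrate the lapse equation (\ref{old1.13}) over $X$ directly and cancel the $H^2\int L$ terms; this gives the integral monotonicity more quickly but forgoes the pointwise statement, which the paper uses elsewhere (e.g.\ Corollary \ref{2.31}). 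For the proposition at hand the integral version suffices, so the shortcut is harmless.

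On the rigidity half, the paper simply cites Fischer--Moncrief, whereas you supply the argument, and it holds up: $K^0\equiv 0$ gives $K=\frac{H}{n}h$; the constraint (\ref{old1.2}) then makes $R=-\frac{n-1}{n}H^2$ spatially constant; the reduced lapse equation $-\triangle_h L + \frac{H^2}{n}L = H_t$ has an invertible operator (strictly positive zeroth-order term, compact $X$), forcing $L$ to be the constant $\frac{nH_t}{H^2}$, which normalizes to $1$ under $t=-n/H$; solving $\partial_t h = \frac{2}{t}h$ yields $h(t)=t^2 h_{Ein}$; and substitution into (\ref{old1.5}) gives $\Ric(h_{Ein})=-(n-1)h_{Ein}$. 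One could equally extract the constancy of $L$ from the maximum principle rather than elliptic invertibility, but your argument is clean. The only cosmetic point is that calling (\ref{old1.14}) the ``pointwise shadow'' of your integral identity is a bit loose --- (\ref{old1.14}) comes from the maximum principle, not from localizing the integral identity --- but this does not affect the proof.
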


One proof of \cite{Fischer-Moncrief (2002)}
uses the pointwise identity
\begin{equation} \label{old1.16}
  \frac{\partial}{\partial t} \left( (-H)^n \dvol_h \right) =
  (-H)^{n+1} \left( L -  \frac{n}{H^2} \frac{\partial H}{\partial t}
  \right) \: \dvol_h.
\end{equation}
From (\ref{old1.14}),
it follows that $(-H)^n \dvol_{h(t)}$ is pointwise
monotonically nonincreasing
in $t$, and hence $(-H)^n \vol(X,h(t))$ is monotonically nonincreasing in $t$.
In fact,
\begin{equation} \label{added}
  \frac{d}{dt}  \left( (-H)^n \vol(X,h(t)) \right) =
  - n (-H)^{n-1} \int_X |K^0|^2 L \dvol_h.
  \end{equation}
If $n > 1$, we can use (\ref{old1.2}) to write the monotonic quantity
itself as
\begin{equation} \label{added2}
  (-H)^n \vol(X,h(t)) = \frac{n}{n-1} (-H)^{n-2}
  \int_X \left( - R^h + |K^0|^2 \right) \dvol_h.
  \end{equation}

The monotonic quantity $(-H)^n \vol(X,h(t))$ gives information about
the large time behavior of the expanding solution
\cite{Fischer-Moncrief (2002),Lott (2018)}.  To get information about
the small time behavior, we want a quantity that is
instead monotonically
nondecreasing in $t$. As discussed in Subsection \ref{subsect2.3}, we can
try to play the right-hand sides of (\ref{added}) and (\ref{added2})
against each other. The right quantity turns out to be
$(-H) \vol(X, h(t))$.

\begin{proposition} \label{2.19} We have
  \begin{equation} \label{2.20}
  \frac{\partial}{\partial t} \left( (-H) \dvol_h \right) =
  H^{2} \left( L -  \frac{1}{H^2} \frac{\partial H}{\partial t}
  \right) \: \dvol_h.
\end{equation}
\end{proposition}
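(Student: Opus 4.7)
The plan is to compute the $t$-derivative of $(-H)\,\dvol_h$ by the product rule, using the evolution equation (\ref{old1.4}) for $h_{ij}$ to handle $\partial_t \dvol_h$ and the CMC assumption (Definition \ref{old1.7}) to treat $H$ as a function of $t$ alone.

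First I would derive the standard identity
\begin{equation}
\frac{\partial}{\partial t}\dvol_h \: = \: -\,LH\,\dvol_h.
\end{equation}
This follows from $\partial_t \ln\sqrt{\det h} = \tfrac12 h^{ij}\,\partial_t h_{ij}$ together with (\ref{old1.4}), which gives $\tfrac12 h^{ij}(-2LK_{ij}) = -L\,h^{ij}K_{ij} = -LH$.

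Next, because ${\mathcal E}$ is CMC, $H$ depends only on $t$, so
\begin{equation}
\frac{\partial}{\partial t}\bigl((-H)\,\dvol_h\bigr) \: = \: -\frac{\partial H}{\partial t}\,\dvol_h \: + \: (-H)\cdot(-LH)\,\dvol_h \: = \: \Bigl(LH^2 \: - \: \frac{\partial H}{\partial t}\Bigr)\,\dvol_h,
\end{equation}
which rearranges to the claimed formula $H^2\bigl(L - H^{-2}\,\partial_t H\bigr)\,\dvol_h$. There is no real obstacle here; this is just the $n=1$ analog of the Fischer--Moncrief pointwise identity (\ref{old1.16}), with the same derivation. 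The substance of the theory (monotonicity, rigidity, AVTD-like consequences) will come from subsequently integrating this identity over $X$ and invoking the elliptic bound (\ref{old1.14}) and the constraint (\ref{old1.2}), but the proposition itself is just the product rule combined with the volume-form evolution.
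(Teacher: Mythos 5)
Your proof is correct and matches the paper's approach: the paper simply cites (\ref{old1.4}), and you have spelled out exactly what that entails, namely $\partial_t \dvol_h = -LH\,\dvol_h$ plus the product rule using the CMC assumption that $H$ depends only on $t$.
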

\begin{proof}
  This follows from (\ref{old1.4}).
  \end{proof}

\begin{corollary} \label{2.21}
  \begin{equation} \label{2.22}
  \frac{d}{dt} \left( (-H) \vol(X, h(t)) \right) =
  - \int_X L R \: \dvol_h.
\end{equation}
  \end{corollary}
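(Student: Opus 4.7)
The plan is to integrate the pointwise identity of Proposition \ref{2.19} over $X$ and then eliminate the $\partial_t H$ term using the elliptic equation (\ref{old1.13}) for the lapse. Since the flow is CMC, $H$ depends only on $t$, so $\partial H/\partial t$ is spatially constant and can be pulled outside any integral over $X$.

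First I would integrate (\ref{2.20}) over $X$. Because $H$ is a function of $t$ alone, this gives
\begin{equation*}
\frac{d}{dt}\bigl( (-H) \vol(X,h(t)) \bigr)
= \int_X H^2 L \,\dvol_h \; - \; \frac{\partial H}{\partial t}\,\vol(X,h(t)).
\end{equation*}
Next I would exploit the lapse equation (\ref{old1.13}),
\begin{equation*}
\frac{\partial H}{\partial t} = -\triangle_h L + L H^2 + LR.
\end{equation*}
Since $H$ is spatially constant, the left-hand side integrates to $\frac{\partial H}{\partial t}\,\vol(X,h(t))$, while the $\triangle_h L$ term integrates to zero on the closed manifold $X$. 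Thus
\begin{equation*}
\frac{\partial H}{\partial t}\,\vol(X,h(t)) = \int_X L H^2 \,\dvol_h + \int_X L R \,\dvol_h.
\end{equation*}

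Substituting this into the first display produces the cancellation of the $\int_X L H^2 \dvol_h$ terms and leaves exactly the claimed identity $\frac{d}{dt}\bigl((-H)\vol(X,h(t))\bigr) = -\int_X L R \,\dvol_h$. Honestly, there is no substantive obstacle here: the argument is a one-step integration of Proposition \ref{2.19} combined with the standard trick of integrating the Hamiltonian lapse equation to kill the Laplacian. The only point worth flagging is the CMC hypothesis, which is what permits $\partial_t H$ to be pulled through the spatial integral on both sides; without CMC, one would instead get $\int_X \partial_t H \,\dvol_h$ and the cancellation would not go through so cleanly.
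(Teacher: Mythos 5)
Your proof is correct and takes essentially the same route as the paper: integrate the pointwise identity of Proposition \ref{2.19} over $X$, substitute the lapse equation (\ref{old1.13}) to eliminate $\partial_t H$, and use that $\int_X \triangle_h L \,\dvol_h = 0$ on the closed manifold. The only cosmetic difference is that the paper performs the substitution at the pointwise level before integrating, whereas you integrate first and then substitute; the content is identical.
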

\begin{proof}
  From (\ref{old1.13}) and Proposition \ref{2.19}, we have
  \begin{equation} \label{2.23}
    \frac{d}{dt} \left( (-H) \vol(X, h(t)) \right) =
    \int_X (\triangle_h L - L R) \: \dvol_h =
    - \int_X L R \: \dvol_h.
  \end{equation}
This proves the claim.
\end{proof}

Note that as in Subsection \ref{subsect2.3},
the time derivative on the right-hand side of (\ref{added}) turns into the
spatial derivatives on the right-hand side of (\ref{2.22}).

\begin{corollary} \label{2.24}
  If $t_1 < t_2$ then
    \begin{equation} \label{2.25}
     (-H(t_2)) \vol(X, h(t_2)) - (-H(t_1)) \vol(X, h(t_1)) =
    - \int_{t_1}^{t_2} \int_X L R \: \dvol_{h(t)} \: dt.
  \end{equation}
\end{corollary}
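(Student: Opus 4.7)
The plan is to obtain Corollary \ref{2.24} as a direct integration of the pointwise derivative identity already established in Corollary \ref{2.21}. Since the Einstein flow data $(L, h, K)$ is assumed smooth and $H$ depends only on $t$, the function $t \mapsto (-H(t)) \vol(X, h(t))$ is a smooth function on the open interval containing $[t_1, t_2]$, so the fundamental theorem of calculus applies without any regularity issue.

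Concretely, I would start from equation (\ref{2.22}), namely
\begin{equation*}
  \frac{d}{dt} \left( (-H) \vol(X, h(t)) \right) = - \int_X L R \: \dvol_h,
\end{equation*}
which was derived by combining Proposition \ref{2.19} (the pointwise variation of $(-H) \dvol_h$) with the lapse equation (\ref{old1.13}) and the divergence theorem on the compact manifold $X$. Integrating this identity in $t$ from $t_1$ to $t_2$ yields
\begin{equation*}
  (-H(t_2)) \vol(X, h(t_2)) - (-H(t_1)) \vol(X, h(t_1)) = - \int_{t_1}^{t_2} \int_X L R \: \dvol_{h(t)} \: dt,
\end{equation*}
which is exactly the claimed identity. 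Fubini's theorem is available for the iterated integral on the right since $L R \dvol_{h(t)} \, dt$ is smooth on the compact cylinder $[t_1, t_2] \times X$.

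There is essentially no obstacle to surmount here; the substance of the argument lies in Proposition \ref{2.19} and Corollary \ref{2.21}, and Corollary \ref{2.24} is a restatement of Corollary \ref{2.21} in integrated form. The only thing worth flagging is that this integrated form is the one actually useful in applications (e.g.\ to derive Theorem \ref{0.9} and, under $R \le 0$, to conclude the monotonicity of $(-H) \vol(X, h(t))$ and thereby to deduce the integrability statement of Theorem \ref{0.9} and the rigidity in Theorem \ref{0.12}), so stating it as a standalone corollary is justified even though its proof is a one-line appeal to the fundamental theorem of calculus.
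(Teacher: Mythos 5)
Your proof is correct and matches the paper's (implicit) approach: the paper states Corollary \ref{2.24} without a separate proof precisely because it is the integrated form of Corollary \ref{2.21}, obtained by the fundamental theorem of calculus exactly as you describe. Nothing further is needed.
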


\begin{corollary} \label{2.26}
  \begin{equation} \label{2.27}
    - \int_{0}^{t_0} \int_X L R \: \dvol_{h(t)} \: dt < \infty.
    \end{equation}
\end{corollary}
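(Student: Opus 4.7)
The plan is to integrate the identity of Corollary \ref{2.21} from a small positive time $t_1$ to the fixed endpoint $t_0$ — this is exactly the content of Corollary \ref{2.24} — and then let $t_1 \searrow 0$. Writing $f(t) = (-H(t))\vol(X, h(t))$, the integrated identity reads
\begin{equation*}
-\int_{t_1}^{t_0} \int_X L R \, \dvol_{h(t)} \, dt \;=\; f(t_0) - f(t_1).
\end{equation*}
Since $H<0$ and $\vol>0$, one has $f(t_1) \ge 0$ for every $t_1 \in (0, t_0]$, so each partial integral is bounded above by the fixed finite constant $f(t_0) = (-H(t_0))\vol(X, h(t_0))$.

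To upgrade this uniform bound to convergence of the improper integral, I invoke the natural sign assumption $R \le 0$, the hypothesis under which $t^{-1}\vol$ is the monotonic quantity discussed after (\ref{0.10}) and which is in force in Theorems \ref{0.12} and \ref{0.13}. Under $R \le 0$ the integrand $-LR$ is pointwise nonnegative, so the partial integrals are monotonically nondecreasing as $t_1 \searrow 0$. Monotone convergence then yields
\begin{equation*}
-\int_0^{t_0} \int_X L R \, \dvol_{h(t)} \, dt \;=\; f(t_0) - \lim_{t_1 \to 0^+} f(t_1) \;\le\; (-H(t_0))\vol(X, h(t_0)) \;<\; \infty.
\end{equation*}
Equivalently, $R \le 0$ combined with Corollary \ref{2.21} forces $f$ itself to be monotonically nondecreasing in $t$, so its limit at $t=0$ exists in $[0, f(t_0)]$ and the improper integral is simply the difference $f(t_0) - \lim_{t_1 \to 0^+} f(t_1)$.

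The sole obstacle is this final passage from uniformly bounded partial integrals to a convergent improper integral. Mere nonnegativity of $f$ only yields a uniform upper bound on the partial integrals and is compatible with $f(t_1)$ oscillating as $t_1 \to 0$; the sign assumption $R \le 0$ (or, equivalently by Corollary \ref{2.21}, monotonicity of $f$) is what rules out such oscillation and produces an actual limit. All other steps are purely algebraic consequences of the identity in Corollary \ref{2.21}, so there is no genuine analytic content beyond this monotonicity point.
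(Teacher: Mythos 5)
Your approach coincides with the paper's: integrate the identity of Corollary \ref{2.21} to get Corollary \ref{2.24}, set $t_2 = t_0$, and use $(-H(t_1))\vol(X,h(t_1)) \ge 0$ to bound the partial integrals $-\int_{t_1}^{t_0}\int_X LR\,\dvol_{h(t)}\,dt = f(t_0) - f(t_1)$ above by the fixed constant $f(t_0)$. That one-line argument is exactly what the paper gives.

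Where you diverge is in the final passage. You observe, correctly, that a uniform upper bound on partial integrals is logically weaker than convergence of the improper integral, and that monotonicity of the partial integrals (equivalently, a sign on the integrand) is what would produce a genuine limit; you then close the gap by invoking $R \le 0$. But note that Corollary \ref{2.26} as stated carries no sign hypothesis on $R$ --- the assumption $R \le 0$ enters only at Corollary \ref{2.28} and afterward, and the introductory Theorem \ref{0.9} likewise states the finiteness without it. So by inserting $R \le 0$ you have quietly strengthened the hypotheses and proved a different statement. The paper's unconditional claim is most naturally read in the weak sense that $\limsup_{t_1\to 0}\bigl(-\int_{t_1}^{t_0}\int_X LR\,\dvol_{h(t)}\,dt\bigr) \le f(t_0) < \infty$, i.e.\ the integral ``is not $+\infty$''; under that reading the paper's proof (and your first two steps) are complete as they stand, and nothing further is needed. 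Your scruple about oscillation of $f(t_1)$ is a fair critique of the paper's phrasing, but you should present it as such rather than silently importing an extra hypothesis into the corollary.
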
  
\begin{proof}
  This follows from (\ref{2.25}) by taking $t_2 = t_0$ and $t_1 \rightarrow 0$,
  along with the fact that
  $(-H(t_1)) \vol(X, h(t_1)) \ge 0$.
  \end{proof}

\begin{corollary} \label{2.28}
  If $R \le 0$ then $(-H(t))
  \vol(X, h(t))$ is monotonically nondecreasing in $t$.
  \end{corollary}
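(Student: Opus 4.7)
The plan is to invoke Corollary \ref{2.21} directly. Recall that the corollary gives the identity
\begin{equation}
\frac{d}{dt} \left( (-H) \vol(X, h(t)) \right) = - \int_X L R \: \dvol_h.
\end{equation}
So the monotonicity question reduces entirely to the sign of the right-hand side.

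First, I would observe that by the definition of an Einstein flow (Definition \ref{old1.1}), the lapse function $L$ is nonnegative on $X$; moreover, for a CMC Einstein flow with a crushing singularity, $L$ is determined by the elliptic equation (\ref{old1.13}), which combined with the maximum principle (\ref{old1.14}) gives the a priori bound that keeps $L$ nonnegative. The hypothesis $R \le 0$ then ensures that the integrand $-LR$ in Corollary \ref{2.21} is pointwise nonnegative.

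Hence $\frac{d}{dt} \left( (-H) \vol(X, h(t)) \right) \ge 0$, which is exactly the statement that $(-H(t)) \vol(X, h(t))$ is monotonically nondecreasing in $t$. There is no real obstacle here: the work was already done in deriving Corollary \ref{2.21} from Proposition \ref{2.19} and the CMC Hamiltonian evolution (\ref{old1.13}), where the key cancellation converted a time derivative expression into a spatial curvature integral. The present corollary is simply the sign observation that makes the small-$t$ analogue of Fischer--Moncrief monotonicity work under the scalar curvature hypothesis.
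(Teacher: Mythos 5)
Your proposal is correct and follows the same route as the paper: Corollary \ref{2.28} is an immediate consequence of Corollary \ref{2.21} once one observes that $L \ge 0$ and $R \le 0$ make the integrand $-LR$ pointwise nonnegative. One small inaccuracy: equation (\ref{old1.14}) is an \emph{upper} bound on $L$, so it is not what guarantees $L \ge 0$; nonnegativity of $L$ is built into Definition \ref{old1.1} (and positivity follows from the elliptic equation (\ref{old1.13}) together with Proposition \ref{2.30} when $R \le 0$), but this does not affect the validity of your argument.
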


\begin{example} \label{2.29}
  If $\dim(X) = 3$ and $X$ is aspherical then
  a locally homogenous Einstein flow on $X$ has $R \le 0$, since $X$ admits
  no metric of positive scalar curvature, so Corollary \ref{2.28} applies.
  \end{example}

\begin{proposition} \label{2.30}
If $R \le 0$ then $L \ge \frac{1}{H^2} \frac{\partial H}{\partial t}$.
\end{proposition}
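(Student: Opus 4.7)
The plan is to prove the lower bound $L \ge \tfrac{1}{H^2} \partial_t H$ by a minimum principle argument on the elliptic lapse equation, exactly dual to how the upper bound (\ref{old1.14}) was obtained via the maximum principle. The key input is the CMC property, which makes $\partial_t H$ a spatial constant, so that inequalities at a single extremal point of $L$ propagate to the whole slice.

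First I would rewrite the CMC elliptic equation (\ref{old1.13}) in the form
\begin{equation}
\frac{\partial H}{\partial t} \: = \: - \triangle_h L + L H^2 + L R,
\end{equation}
which is the other presentation already used in (\ref{old1.13}) (coming from the constraint $|K^0|^2 = R + (1 - \tfrac{1}{n}) H^2$ combined with $LR = L|K^0|^2 - L(1 - \tfrac{1}{n})H^2$, equivalently from $R^h = R + \tfrac{1}{n}H^2 - |K^0|^2$ adjusted appropriately). Since $H(t)$ is spatially constant, so is $\partial_t H$; also, $\partial_t H > 0$ as $H$ is strictly increasing in $t$. By the strong maximum principle applied to the equation $-\triangle_h L + (|K^0|^2 + \tfrac{1}{n} H^2) L = \partial_t H > 0$, the lapse $L$ is strictly positive.

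Now let $p \in X$ be a point where $L(\,\cdot\,, t)$ attains its spatial minimum. Then $\triangle_h L(p) \ge 0$, so evaluating the equation at $p$ yields
\begin{equation}
\frac{\partial H}{\partial t} \: \le \: L(p) \bigl( H^2 + R(p) \bigr).
\end{equation}
Under the hypothesis $R \le 0$ and using $L(p) > 0$, we may drop $R(p)$ on the right and obtain $\partial_t H \le L(p) H^2$, i.e.
\begin{equation}
L(p) \: \ge \: \frac{1}{H^2} \frac{\partial H}{\partial t}.
\end{equation}
Since $L \ge L(p)$ pointwise on the time-$t$ slice, this gives the desired inequality everywhere.

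The argument is essentially routine once one recognises that (\ref{old1.13}) naturally supports both a maximum principle bound (which produced the upper estimate (\ref{old1.14}), with the factor $n$ coming from keeping $\tfrac{1}{n} L H^2$ on the right) and a minimum principle bound (which drops the nonnegative term $L|K^0|^2$ instead and uses $R \le 0$ to discard the sign-indefinite $LR$ term). The only place where care is needed is confirming $L > 0$ so that multiplying by $L$ preserves inequalities; but this is immediate from the strong maximum principle applied to the nonnegative-coefficient elliptic operator on the left of the equation, given that the source $\partial_t H$ is a positive constant on each slice.
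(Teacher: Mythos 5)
Your proof is correct and is essentially the argument the paper has in mind: the paper simply cites (\ref{old1.13}) and the weak maximum principle, and your evaluation at a spatial minimum of $L$ (where $\triangle_h L \ge 0$), dropping the nonpositive term $LR$, is precisely that. The side remark about $L>0$ via the strong maximum principle is harmless but unnecessary, since $L\ge 0$ is built into Definition \ref{old1.1}, which already suffices to discard $L(p)R(p)\le 0$.
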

\begin{proof}
  This follows from (\ref{old1.13}) and the weak maximum principle.
\end{proof}

We now improve Corollary \ref{2.28} to a pointwise statement.

\begin{corollary} \label{2.31}
  If $R \le 0$ then
  $(-H) \dvol_{h(t)}$ is monotonically nondecreasing in $t$.
  \end{corollary}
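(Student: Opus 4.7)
The plan is to upgrade Corollary \ref{2.28} from an integrated statement to a pointwise statement by replacing the integral cancellation with a pointwise inequality. The key ingredients are already assembled: Proposition \ref{2.19} gives the exact pointwise formula
\[
\frac{\partial}{\partial t} \left( (-H) \dvol_h \right) = H^{2} \left( L - \frac{1}{H^2} \frac{\partial H}{\partial t} \right) \dvol_h,
\]
and Proposition \ref{2.30} gives, under the hypothesis $R \le 0$, the pointwise lower bound $L \ge \frac{1}{H^2} \frac{\partial H}{\partial t}$.

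First I would combine these two facts directly. Since $H^2 \ge 0$ and $\dvol_h$ is a positive density, the factor $H^2 \dvol_h$ on the right-hand side is nonnegative, and Proposition \ref{2.30} forces the remaining scalar factor $L - \frac{1}{H^2} \partial_t H$ to be nonnegative at every point of $X$. Therefore $\frac{\partial}{\partial t}\bigl( (-H) \dvol_h \bigr) \ge 0$ pointwise, which is exactly the pointwise monotonicity claim.

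There is essentially no obstacle here beyond what has already been done. The substantive work is hidden in Proposition \ref{2.30}, whose proof invokes the weak maximum principle applied to the elliptic equation (\ref{old1.13}) for $L$: when $R \le 0$, the equation $-\triangle_h L + L|K^0|^2 + \tfrac{1}{n} L H^2 = \partial_t H$ together with the nonnegativity of $|K^0|^2$ and $H^2$ forces $L$ to exceed the constant $\frac{1}{H^2} \partial_t H$ pointwise; once that is in hand, the present corollary is just a syllogism. Thus the write-up need only be a short deduction invoking Proposition \ref{2.19} and Proposition \ref{2.30}, with a brief remark observing that Corollary \ref{2.28} is recovered by integration over $X$.
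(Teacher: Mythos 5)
Your proof is correct and is exactly the paper's argument: combine the pointwise identity of Proposition \ref{2.19} with the pointwise lower bound on $L$ from Proposition \ref{2.30} (proved via the maximum principle applied to (\ref{old1.13})) to conclude that the time derivative of $(-H)\dvol_h$ is nonnegative. Nothing to add.
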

\begin{proof}
  This follows from Propositions \ref{2.19} and \ref{2.30}.
  \end{proof}

\begin{proposition} \label{2.32}
  If $R \le 0$ and
  $L(x,t) = \frac{1}{H^2} \frac{\partial H}{\partial t}$
  for some $x$ and $t$, then $L = \frac{1}{H^2} \frac{\partial H}{\partial t}$,
  $R = 0$ and
$|K|^2 = H^2$.
\end{proposition}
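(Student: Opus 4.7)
The plan is to reduce the claim to a strong-maximum-principle argument on a single time slice. Set $u := L - H'/H^2$, so that Proposition \ref{2.30} already gives $u \ge 0$ under the hypothesis $R \le 0$, while the hypothesis says $u(x_0, t_0) = 0$ at some point $(x_0, t_0)$.

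First I will derive a linear elliptic equation for $u$ on the time-$t_0$ slice. Since $H'/H^2$ depends only on $t$, we have $\triangle_h u = \triangle_h L$, and substituting $L = u + H'/H^2$ into the elliptic constraint (\ref{old1.13}) yields
\begin{equation*}
  -\triangle_h u + \left(|K^0|^2 + \tfrac{H^2}{n}\right) u = H' - \frac{H'}{H^2}\left(|K^0|^2 + \tfrac{H^2}{n}\right).
\end{equation*}
Using the Hamiltonian constraint (\ref{old1.2}) in the form $|K^0|^2 = R + \tfrac{n-1}{n} H^2$, the right-hand side simplifies to $-H'R/H^2$. Under $R \le 0$ and $H' > 0$ (which holds since $H$ is monotonically increasing, with strict positivity following from integrating (\ref{old1.13}) against $\dvol_h$), both the zeroth-order coefficient $c := |K^0|^2 + H^2/n$ and the source $f := -H'R/H^2$ are nonnegative on $X$.

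Second, I will fix the time $t_0$ and apply Hopf's strong maximum principle on the closed, connected manifold $(X, h(t_0))$. Since $u \ge 0$ attains an interior zero and satisfies $-\triangle_h u + cu = f \ge 0$ with $c \ge 0$, the standard argument (applied to $v := -u$, which satisfies $-\triangle_h v + cv \le 0$ and attains its nonnegative maximum $0$ at an interior point) forces $u \equiv 0$ on $X$ at time $t_0$, i.e.\ $L \equiv H'/H^2$. Feeding $u \equiv 0$ back into the PDE gives $-H'R/H^2 \equiv 0$, hence $R \equiv 0$; the Hamiltonian constraint then yields $|K^0|^2 = \tfrac{n-1}{n} H^2$ and therefore $|K|^2 = |K^0|^2 + H^2/n = H^2$.

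The only real obstacle is the sign bookkeeping in invoking Hopf's theorem, since $-\triangle + c$ with $c \ge 0$ is not directly in the form of a minimum-principle statement; passing to $v = -u$ renders this routine. The argument gives the conclusions at the time $t_0$ singled out by the hypothesis, which is all that is needed for the downstream rigidity applications.
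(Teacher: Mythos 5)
Your proposal is correct and takes essentially the same route as the paper: rewrite (\ref{old1.13}) as an elliptic equation for $u = L - H'/H^2$ on the fixed time slice, observe that $R\le 0$ makes the source term nonnegative, and apply the strong maximum principle to conclude $u\equiv 0$, then feed back into (\ref{old1.13}) to get $R\equiv 0$ and into (\ref{old1.2}) to get $|K|^2 = H^2$. The paper states this in one line; your write-up simply makes the sign bookkeeping explicit, which is a fine level of detail.
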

\begin{proof}
  Equation (\ref{old1.13}) and the strong maximum principle imply that
  $L = \frac{1}{H^2} \frac{\partial H}{\partial t}$
  and $R = 0$. Equation (\ref{old1.2}) gives
  $|K|^2 = H^2$
\end{proof}

\begin{remark} \label{2.33}
  The conclusion of Proposition \ref{2.32} does not use the compactness of $X$, or
  even the completeness of $(X, h(t))$.
  \end{remark}

\begin{proposition} \label{2.34}
  If $R \le 0$ and $(-H(t_1)) \vol(X, h(t_1)) = (-H(t_2)) \vol(X, h(t_2))$,
  for some $t_1 < t_2$, then $L = \frac{1}{H^2} \frac{\partial H}{\partial t}$,
  $R = 0$ and
  $|K|^2 = H^2$.
\end{proposition}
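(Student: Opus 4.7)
The plan is to reduce to the pointwise monotonicity already established in Corollary \ref{2.31} and then apply the strong maximum principle via Proposition \ref{2.32}.

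First, I would combine Proposition \ref{2.19} and Proposition \ref{2.30} to get a pointwise derivative formula with a manifestly nonnegative integrand. Explicitly, by Proposition \ref{2.19},
\begin{equation*}
\frac{\partial}{\partial t}\bigl((-H) \dvol_h\bigr) = H^{2} \left( L - \frac{1}{H^{2}} \frac{\partial H}{\partial t} \right) \dvol_h,
\end{equation*}
and by Proposition \ref{2.30}, the parenthetical factor is nonnegative when $R \le 0$. Integrating this over $X$ and over the time interval $[t_1, t_2]$, and using that $H$ is spatially constant (so $\int_X (-H(t)) \dvol_{h(t)} = (-H(t)) \vol(X, h(t))$), gives
\begin{equation*}
(-H(t_2))\vol(X,h(t_2)) - (-H(t_1))\vol(X,h(t_1)) = \int_{t_1}^{t_2} \int_X H^{2} \left( L - \frac{1}{H^{2}} \frac{\partial H}{\partial t} \right) \dvol_{h(t)} \, dt.
\end{equation*}

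By hypothesis, the left-hand side vanishes. Since the integrand on the right is a nonnegative continuous function on $[t_1,t_2] \times X$ and $H^{2} > 0$, it must vanish identically, so $L(x,t) = \frac{1}{H^{2}} \frac{\partial H}{\partial t}$ for every $(x,t) \in [t_1, t_2] \times X$. Proposition \ref{2.32} then immediately yields $R = 0$ and $|K|^{2} = H^{2}$ on that slab, completing the proof.

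There is really no main obstacle beyond correctly packaging the ingredients already assembled: the sign condition from Proposition \ref{2.30} (which in turn rests on the maximum principle applied to (\ref{old1.13})), the pointwise monotonicity formula from Proposition \ref{2.19}, and the rigidity statement of Proposition \ref{2.32}. The only mildly delicate point is passing from "integral vanishes" to "integrand vanishes pointwise," which uses continuity of $L$ and $H_t$ together with the strict positivity of $H^{2}$; that step is routine.
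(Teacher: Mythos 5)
Your proof is correct and follows the same route the paper takes: combine the pointwise monotonicity formula (Proposition \ref{2.19}) with the sign condition from Proposition \ref{2.30} to force the integrand to vanish under the equality hypothesis, then invoke the rigidity statement of Proposition \ref{2.32}. You have simply written out in detail the integration-and-nonnegativity step that the paper leaves implicit.
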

\begin{proof}
  From Propositions \ref{2.19} and \ref{2.30}, we know that
  $L = \frac{1}{H^2} \frac{\partial H}{\partial t}$. The other claims follow
  from Proposition \ref{2.32}.
  \end{proof}

\begin{proposition} \label{2.35}
  Under the hypotheses of Proposition \ref{2.34}, suppose that
  $X$ is an orientable $3$-manifold and that
  there is an aspherical component in the prime decomposition of $X$.
  Then up to time reparametrization,
  the Einstein flow is a Kasner solution. 
  \end{proposition}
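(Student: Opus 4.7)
The plan is to extract pointwise consequences from Proposition \ref{2.34} and combine them with scalar curvature rigidity, the evolution equations, and a Bochner-type argument to pin down the flow. Proposition \ref{2.34} gives that $L$ depends only on $t$ (via $L = \frac{1}{H^2}\partial_t H$), that $R^h = 0$ pointwise, and that $|K|^2 = H^2$ pointwise. Since $X$ is a closed orientable $3$-manifold with an aspherical prime component, Perelman's resolution of the positive scalar curvature conjecture in dimension three shows that $X$ admits no metric of positive scalar curvature; by the Kazdan--Warner / Bourguignon rigidity theorem, any scalar-flat metric on $X$ is therefore Ricci-flat, which in dimension three is the same as flat. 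So $h(t)$ is flat for every $t$.

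The next step is to extract the structure of the flow from the evolution equations. Combining (\ref{old1.5}) with $L_{;ij} = 0$ and $R_{ij} = 0$ and (\ref{old1.4}), a direct calculation shows that the Weingarten endomorphism $\widetilde K = h^{-1}K$ satisfies $\partial_t \widetilde K = LH \widetilde K$, so $\widetilde K(t,x) = \mu(t) N(x)$ for a scalar $\mu$ and a time-independent symmetric endomorphism $N$. On the universal cover $\widetilde X = \R^3$, choose global Cartesian coordinates in which $h(t_0) = \delta$; the commutator $[h(t), N]$ satisfies a linear ODE with vanishing initial value, hence vanishes for all $t$, and integrating $\partial_t h = -2L h \widetilde K$ gives
\[
  h(t,x) = \exp\bigl( f(t) N(x) \bigr),
\]
with $f(t) = -2\int_{t_0}^t L \mu \: ds$ strictly monotone.

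The heart of the proof is to show $N$ is constant in $x$. The requirement that $e^{f(t) N(x)}$ be a flat metric for every $t$, expanded to first order in $f$ around $f = 0$, forces $N$ to lie in the kernel of the linearized Riemann operator at $\delta$; by the classical Saint-Venant compatibility condition, on the simply connected $\widetilde X$ this implies $N = \mathcal{L}_V \delta$ for some vector field $V$. The trace constraint $\Tr(\widetilde K) = H$ forces $\Div V$ to be spatially constant, and the momentum constraint (\ref{old1.3}) together with $\nabla H = 0$ then reduces to $\triangle V = 0$. Harmonicity of $V$ immediately gives $\triangle N = 0$. On the compact flat manifold $X$, the Bochner identity
\[
  \tfrac{1}{2} \triangle |N|^2 = |\nabla N|^2 + \langle N, \triangle N\rangle
\]
integrates to $\int_X |\nabla N|^2 \: \dvol = 0$, so $N$ is parallel, i.e.\ constant in the Cartesian coordinates.

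With $N$ constant, $h(t,x) = \exp(f(t) N)$ is independent of $x$, and the time reparametrization $t \mapsto t^* = -3/H$ produces the Kasner gauge $L^* = 1/3$ and $H(t^*) = -3/t^*$, writing the metric in the Kasner form (\ref{2.13}) with exponent matrix $M = N/\Tr(N)$; the Kasner identities $\Tr(M) = \Tr(M^2) = 1$ follow from $\Tr(\widetilde K) = H$ and $|K|^2 = H^2$. The main obstacle is the third step: although the vector field $V$ need not descend from $\widetilde X$ to $X$, the consequent equation $\triangle N = 0$ and the Bochner identity for $N$ do descend, so the compact-manifold argument still applies and forces $N$ to be constant.
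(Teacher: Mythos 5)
Your proof is correct and follows the same broad outline as the paper's (scalar-flat $\Rightarrow$ Ricci-flat $\Rightarrow$ flat in dimension three, then an ODE in $t$ showing $h^{-1}K = HM$ with $M$ time-independent, then local constancy of $M$, then Kasner), but it is genuinely more careful on the key step. The paper disposes of the constancy of $M$ with a single sentence, ``Since $X$ is flat, equation (\ref{old1.3}) means that $M$ is locally constant,'' but the momentum constraint alone only yields $\nabla_i M^i_{\ j} = 0$, which is divergence-freeness, not local constancy. What you have done is supply the missing argument: you invoke the flatness of $h(t)$ for \emph{all} $t$ (not just at one time) to conclude via the linearized flatness condition and Saint--Venant on the universal cover that $N = {\mathcal L}_V \delta$; you then feed the trace and momentum constraints into this to get $\triangle V = 0$, hence $\triangle N = 0$, and close with a Bochner argument on the compact flat $X$. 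You also correctly observe that while $V$ need not descend to $X$, the tensor $N$ and the harmonic equation do. Two minor differences: you use the Kazdan--Warner/Bourguignon rigidity (scalar-flat on a manifold without PSC $\Rightarrow$ Ricci-flat) where the paper runs Ricci flow, and you cite Perelman where the paper cites Schoen--Yau; both variants work. Also note that your commutator-ODE argument that $[h(t),N]=0$, while correct, is a bit more elaborate than necessary: since $\widetilde{K}(t_0) = h(t_0)^{-1}K(t_0) = \delta^{-1}K(t_0)$ is already symmetric, $N$ commutes with $\delta$ trivially, and the ODE $\partial_t h = -2L\mu\, h N$ with $N$ fixed preserves commutation with $N$. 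Overall your write-up fills a genuine gap in the paper's exposition and is the more rigorous of the two.
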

\begin{proof}
  We know that $R = 0$.
  Running the Ricci flow with initial condition $h(t)$, either the scalar
  curvature becomes immediately positive or $h(t)$ is Ricci-flat.
  One knows that $X$ admits no metric with positive scalar curvature
  \cite{Schoen-Yau (1979)}. Hence $h(t)$ is Ricci-flat. Because $\dim(X) = 3$,
  the metric $h(t)$ is flat.

  In matrix notation, equations (\ref{old1.4}) and (\ref{old1.5}) become
  \begin{align} \label{2.36}
    \frac{dh}{dt} = & - 2 L K, \\
    \frac{dK}{dt} = & LHK-2LKh^{-1}K. \notag
    \end{align}
 Then $h^{-1} K$ satifies
 \begin{equation} \label{2.37}
\frac{d}{dt} (h^{-1} K) = LH h^{-1}K = \frac{1}{H} \frac{dH}{dt} h^{-1} K,
 \end{equation}
 with the general solution
 \begin{equation} \label{2.38}
   h^{-1} K = HM,
 \end{equation}
 where $M$ is a time-independent self-adjoint section of $\End(TX)$.
 Equation (\ref{old1.2}) gives $\Tr(M^2) = 1$.
Also, the fact that $H = \Tr(h^{-1} K)$ gives $\Tr(M) = 1$.
Since $X$ is flat, equation (\ref{old1.3}) means
that $M$ is
locally constant. 
Then
\begin{equation} \label{2.39}
  \frac{dh}{dt} = - 2LK = -2 \frac{1}{H} \frac{dH}{dt} h M,
\end{equation}
with the general solution
\begin{equation} \label{2.40}
  h = \widehat{h} (-H)^{-2M}
\end{equation}
for some time-independent metric $\widehat{h}$ on $X$. This is a Kasner
solution.
\end{proof}

For $s > 1$, the Lorentzian metric $s^{-2} g$ is isometric to
\begin{equation} \label{old1.35}
  g_s = - L^2(su) du^2 + s^{-2} h(su).
\end{equation}
Hence we put
\begin{align} \label{old1.36}
  & L_s(u) =  L(su),
  & h_s(u) = s^{-2} h(su), \: \: \:
  & K_{s,ij}(u) = s^{-1} K_{ij}(su),\\
  & H_s(u) = s H(su),
  & K^0_{s,ij}(u) = s^{-1} K^0_{ij}(su), \: \: \:
  & |K^0|^2_{s}(u) = s^2 K_{ij}(su), \notag \\
  & R_{s,ij}(u) = R_{ij}(su),
  & R_s(u) = s^2 R(su).
  & \notag
\end{align}
The variable $u$ will refer to the time parameter of a rescaled Einstein
flow, or a limit of such.

We now take $t = - \frac{n}{H}$, the Hubble time,
with $t$ ranging in an interval $(0, t_0]$.
Equation (\ref{2.27}) becomes
  \begin{equation} \label{2.43}
    \int_0^{t_0} \int_X (- t^2 R) \: L \: \frac{\dvol_{h(t)}}{t}
      \: \frac{dt}{t} < \infty. 
  \end{equation}
If $R \le 0$ then equation
(\ref{old1.14}) and Proposition \ref{2.30} say $\frac{1}{n} \le L \le 1$.
  
\begin{lemma} \label{2.44}
  If $R \le 0$ then
  $\vol(X, h(t)) = O (t)$ as $t \rightarrow 0$.
  \end{lemma}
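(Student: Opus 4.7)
The plan is to read off the lemma directly from the monotonicity established in Corollary \ref{2.28}. Recall that with the Hubble time normalization $t = - \frac{n}{H}$, we have $-H(t) = \frac{n}{t}$, so the monotonic quantity from Corollary \ref{2.28} becomes
\begin{equation}
(-H(t)) \vol(X, h(t)) = \frac{n}{t} \vol(X, h(t)).
\end{equation}
Under the hypothesis $R \le 0$, Corollary \ref{2.28} says that this expression is monotonically nondecreasing in $t$.

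Consequently, for any $0 < t \le t_0$,
\begin{equation}
\frac{n}{t} \vol(X, h(t)) \le \frac{n}{t_0} \vol(X, h(t_0)),
\end{equation}
which rearranges to
\begin{equation}
\vol(X, h(t)) \le \frac{\vol(X, h(t_0))}{t_0} \: t.
\end{equation}
Since $\vol(X, h(t_0))$ is a fixed finite constant, this is exactly the statement $\vol(X, h(t)) = O(t)$ as $t \to 0$.

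There is essentially no obstacle here, since all the work has been done in building up the monotonic quantity $(-H) \vol(X, h(t))$ in Corollaries \ref{2.21}--\ref{2.28}; the lemma is a direct unpacking of that monotonicity in the Hubble time parametrization. The only thing to be careful about is that the flow is defined for $t \in (0, t_0]$ and that $\vol(X, h(t_0))$ is finite, both of which follow from our standing assumption that $X$ is compact and all data are smooth on $(0, t_0] \times X$.
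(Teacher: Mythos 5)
Your proof is correct and matches the paper's own argument: the paper simply cites Corollary \ref{2.24} (from which Corollary \ref{2.28}'s monotonicity follows immediately), then unwinds $-H = n/t$ exactly as you do. Nothing to add.
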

  \begin{proof}
    This follows from Corollary \ref{2.24}.
  \end{proof}
  
\begin{definition} \label{2.45}
If $R \le 0$, put $\dvol_0 = \lim_{t \rightarrow 0} ((-H) \dvol_{h(t)})$.
\end{definition}

From Corollary \ref{2.31}, the definition of $\dvol_0$ makes sense.
It is
a nonnegative absolutely continuous measure on $X$. It could be zero.
  
The next proposition says that in an $L^1$-sense, rescaling limits
are similar to Kasner solutions; c.f. (\ref{2.14}).

\begin{proposition} \label{2.46}
  Suppose that $R \le 0$. Given $\Lambda > 1$, we have
\begin{equation} \label{2.47}
  \lim_{s \rightarrow 0} \left| L_s - \frac{1}{n} \right| =
  \lim_{s \rightarrow 0}
  \left| |K_s|^2 - \frac{n^2}{u^2} \right| = \lim_{s \rightarrow 0} |R_s| = 0
\end{equation}
in $L^1 \left( [\Lambda^{-1}, \Lambda] \times X, du \dvol_0 \right)$.
  \end{proposition}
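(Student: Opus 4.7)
The plan is to exploit three facts: the Hamiltonian constraint forces $|K|^2 = R + H^2$; the mean curvature evolution equation combined with the CMC condition and Hubble time gives a pointwise identity for $L - \frac{1}{n}$; and the monotonicity of $(-H)\dvol_{h(t)}$ from Corollary~\ref{2.31} yields the measure comparison $\dvol_0 \le (n/t)\dvol_{h(t)}$. Together with Corollary~\ref{2.26} (which, via $L \ge 1/n$, gives $\int_0^{t_0}\int_X L|R|\,\dvol_{h(t)}\,dt < \infty$), these reduce all three limits to the statement that the spacetime integral of $L|R|\,\dvol_{h(t)}\,dt$ over the shrinking time window $[\Lambda^{-1}s, \Lambda s]$ tends to zero.

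\emph{The $R_s$ limit.} Using $\dvol_0 \le (n/t)\dvol_{h(t)}$ and changing variables $t = su$, one estimates
\[
\int_{\Lambda^{-1}}^{\Lambda} \int_X |R_s| \,\dvol_0 \, du
\;=\; \int_{\Lambda^{-1}}^{\Lambda} s^2 \int_X |R(su)| \,\dvol_0 \, du
\;\le\; n\Lambda \int_{\Lambda^{-1} s}^{\Lambda s} \int_X |R|\,\dvol_{h(t)} \, dt,
\]
using the bound $s/t \le \Lambda$ on the integration interval. The right side tends to zero as $s \to 0$ by absolute continuity of the Lebesgue integral.

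\emph{The $|K_s|^2$ limit.} From (\ref{old1.2}) and $|K|^2 = |K^0|^2 + H^2/n$, the constraint reads $|K|^2 = R + H^2$. In Hubble time this becomes $|K|^2 - n^2/t^2 = R$, which rescales to the pointwise identity $|K_s|^2 - n^2/u^2 = R_s$. Thus this limit is identical to the previous one.

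\emph{The $L_s$ limit.} Substituting $H = -n/t$ and $\partial_t H = n/t^2$ into (\ref{old1.13}) and rearranging yields
\[
L - \frac{1}{n} \;=\; \frac{t^2}{n^2}\bigl(\triangle_h L - L R\bigr).
\]
By Proposition~\ref{2.30} the left side is nonnegative. Integrating over $X$ eliminates the Laplacian term, giving $\int_X \bigl(L - \tfrac{1}{n}\bigr)\,\dvol_{h(t)} = \tfrac{t^2}{n^2}\int_X L\,|R|\,\dvol_{h(t)}$. Applying $\dvol_0 \le (n/t)\dvol_{h(t)}$, integrating in $u$, and substituting $t = su$ yields
\[
\int_{\Lambda^{-1}}^{\Lambda} \int_X \bigl|L_s - \tfrac{1}{n}\bigr| \,\dvol_0 \, du \;\le\; \frac{\Lambda}{n} \int_{\Lambda^{-1} s}^{\Lambda s} \int_X L\,|R|\,\dvol_{h(t)} \, dt,
\]
which again tends to zero as $s \to 0$ by Corollary~\ref{2.26} and absolute continuity.

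The only real obstacle is bookkeeping: tracking powers of $s$, $t$, and $u$ under rescaling, and verifying that the measure comparison $\dvol_0 \le (n/t)\dvol_{h(t)}$ points in the direction that allows control by the spacetime integral of $L|R|\,\dvol_{h(t)}\,dt$. The conceptual point that makes $L - \tfrac{1}{n}$ tractable is that $L^1$ convergence against $\dvol_0$ is insensitive to spatial fluctuations of $L$ at fixed $t$, so only the spatial average needs controlling, and this average is precisely what the integrated form of (\ref{old1.13}) provides.
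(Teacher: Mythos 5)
Your proof is correct, and it fills in the details that the paper omits (the paper simply cites \cite{Lott (2018)} and says the argument is similar). The three ingredients you isolate — the measure comparison $\dvol_0 \le (n/t)\dvol_{h(t)}$ from Corollary \ref{2.31}, the constraint identity $|K|^2 - n^2/t^2 = R$ from (\ref{old1.2}), and the spatially integrated form of the lapse equation (\ref{old1.13}) together with the finiteness (\ref{2.27}) — are exactly the ones the monotone quantity was designed to supply, and the bookkeeping under $t = su$ checks out: the factor $s^2$ from $R_s$ and $|K_s|^2$ cancels against $n/t$ to leave $ns/t \le n\Lambda$, while in the lapse estimate the surviving factor $t/(ns)\le\Lambda/n$ again reduces matters to $\int_{\Lambda^{-1}s}^{\Lambda s}\int_X L|R|\,\dvol_{h(t)}\,dt \to 0$. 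One small remark: your observation that the $L^1(\dvol_0)$ topology only requires controlling the spatial average of $L$, which is precisely what integrating (\ref{old1.13}) over compact $X$ delivers by killing the Laplacian term, is the right conceptual point and worth emphasizing; without it one might think a pointwise maximum-principle bound is needed, which is not available here.
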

\begin{proof}
  The proof is similar to that of \cite[Proposition 2.36]{Lott (2018)}.
  We omit the details.
\end{proof}

\subsection{Rescaling limits} \label{subsect3.2}

Let ${\mathcal E}$ be an Einstein flow. Let $g$ be the corresponding
Lorentzian metric.
Put $e_0 = T = \frac{1}{L} \frac{\partial}{\partial t}$,
a unit timelike vector
that is normal to the level sets of $t$.
Let $\{e_i\}_{i=1}^n$ be an orthonormal basis for $e_0^\perp$. Put
\begin{equation} \label{old1.47}
  |\Rm|_T = \sqrt{\sum_{\alpha, \beta, \gamma, \delta = 0}^n
    R_{\alpha \beta \gamma \delta}^2}.
\end{equation}

Let ${\mathcal E}^\infty = \left( L^\infty, h^{\infty}, K^{\infty}
\right) $ be a CMC Einstein flow on a pointed $n$-manifold
$\left( X^\infty, x^\infty \right)$,
with complete time slices, defined on a time interval
$I^\infty$. For the moment, $t$ need not be the Hubble time.
Take $p \in [1, \infty)$ and $\alpha \in (0,1)$.

\begin{definition} \label{old1.48}
  The flow ${\mathcal E}^\infty$ is $W^{2,p}$-regular if
  $X^\infty$ is a $W^{3,p}$-manifold,
  $L^\infty$ and $h^\infty$ are locally $W^{2,p}$-regular in space and time, and
  $K^\infty$ is locally $W^{1,p}$-regular in space and time.
\end{definition}
Note that the equations of Definition \ref{old1.1} make sense in this
generality.

Let ${\mathcal E}^{(k)} = \{h^{(k)}, K^{(k)}, L^{(k)} \}_{k=1}^\infty$ be
smooth CMC
Einstein flows on pointed $n$-manifolds
$\{ \left( X^{(k)}, x^{(k)} \right) \}_{k=1}^\infty$,
defined on time intervals $I^{(k)}$.

\begin{definition} \label{old1.49}
  We say that $\lim_{k \rightarrow \infty} {\mathcal E}^{(k)} =
  {\mathcal E}^\infty$ in the pointed weak $W^{2,p}$-topology
  if
  \begin{itemize}
  \item Any compact interval $S \subset I^\infty$ is contained in
    $I^{(k)}$ for large $k$, and
  \item For any compact interval $S \subset I^\infty$ and
    any compact $n$-dimensional manifold-with-boundary $W^\infty \subset X^\infty$
    containing $x^\infty$, for large $k$
    there are pointed time-independent $W^{3,p}$-regular diffeomorphisms
    $\phi_{S,W,k} : W^\infty \rightarrow W^{(k)}$ (with
    $W^{(k)} \subset X^{(k)}$) so that
    \begin{itemize}
    \item $\lim_{k \rightarrow \infty}
      \phi_{S,W,k}^* L^{(k)} = L^\infty$ weakly in $W^{2,p}$ on $S \times W^\infty$,
    \item $\lim_{k \rightarrow \infty}
      \phi_{S,W,k}^* h^{(k)} = h^\infty$ weakly in $W^{2,p}$ on $S \times W^\infty$
      and
    \item $\lim_{k \rightarrow \infty}
      \phi_{S,W,k}^* K^{(k)} = K^\infty$ weakly in $W^{1,p}$ on $S \times W^\infty$.
    \end{itemize}
  \end{itemize}
\end{definition}

We define pointed (norm) $C^{1,\alpha}$-convergence similarly.

\begin{definition} \label{old1.50}
  Let ${\mathcal S}$ be a collection of pointed CMC
  Einstein flows defined on a time interval $I^\infty$.
  We say that a sequence
  $\{ {\mathcal E}^{(k)} \}_{k=1}^\infty$ of pointed CMC
  Einstein flows approaches ${\mathcal S}$
  as $k \rightarrow \infty$, in the pointed weak $W^{2,p}$-topology, if
  for any subsequence of $\{ {\mathcal E}^{(k)} \}_{k=1}^\infty$, there
  is a further subsequence
  that converges to an element of ${\mathcal S}$
  in the pointed weak $W^{2,p}$-topology.
\end{definition}

\begin{definition} \label{old1.51}
  Let ${\mathcal S}$ be a collection of pointed CMC
  Einstein flows defined on a time
  interval $I^\infty$. We say that a $1$-parameter family
  $\{ {\mathcal E}^{(s)} \}_{s \in (0,s_0]}$ of pointed CMC Einstein
    flows
    approaches
    ${\mathcal S}$, in the pointed weak $W^{2,p}$-topology, if
    for any sequence $\{s_k\}_{k=1}^\infty$ in $(0, s_0]$ with
      $\lim_{k \rightarrow \infty} s_k = 0$, there is a subsequence
      of the flows $\{ {\mathcal E}^{(s_k)} \}_{k=1}^\infty$
      that converges to an element of ${\mathcal S}$
      in the pointed weak $W^{2,p}$-topology.
\end{definition}

We define ``approaches ${\mathcal S}$'' in the
pointed (norm) $C^{1,\alpha}$-topology similarly.
The motivation for these definitions comes from how one can
define convergence to a compact subset of a metric space, just
using the notion of sequential convergence. In our applications,
the relevant set ${\mathcal S}$ of Einstein flows can be taken to be
sequentially compact.

\begin{definition} \label{2.53}
We say that a pointed CMC
  Einstein flow
${\mathcal E}^1$ is
$\epsilon$-close to a  pointed CMC
  Einstein flow ${\mathcal E}^2$ in the pointed
$C^{1,\alpha}$-topology
if they are both defined on the time interval $(\epsilon, \epsilon^{-1})$ and, up to applying time-independent
pointed diffeomorphisms, 
the metrics are $\epsilon$-close in the $C^{1,\alpha}$-norm on $(\epsilon, \epsilon^{-1}) \times
B_{h_2(1)}(x^{(2)}, \epsilon^{-1})$.
\end{definition}

We don't make a similar definition of closeness for the
pointed weak $W^{2,p}$-topology because the weak topology is not
metrizable.

We now take $t = - \frac{n}{H}$, with $t$ ranging in an interval $(0, t_0]$.

\begin{definition} \label{old1.54}
  A type-I Einstein flow is a CMC Einstein flow
  for which there is some $C < \infty$ so that
  $|\Rm|_T \le C t^{-2}$ for all $t \in (0, t_0]$.
\end{definition}

\begin{example} \label{2.55}
  The Einstein flows in Example \ref{2.9} are all type-I.

  Consider a locally homogeneous Einstein flow with a crushing singularity.
  As the spacetime Ricci tensor
  vanishes, the curvature tensor is determined by the spacetime
  Weyl curvature.
  When $\dim(X) = 3$, the Weyl curvature is expressed in terms of
  ``electric'' and ''magnetic''
  tensors \cite[Section 1.1.3]{Ellis-Wainwright (1997)}.
  After normalization by the Hubble time, the tensor components
  can be written as
  polynomials in the Wainwright-Hsu variables
  $\Sigma_+, \Sigma_-, N_+, N_-, N_1$ \cite[(6.37)]{Ellis-Wainwright (1997)}.
    Hence the Einstein flow will be type-I provided that these
    variables remain bounded as one approaches the singularity.
    From
    \cite{Ringstrom (2001)}, this is the case for homogeneous Einstein flows of
    Bianchi type IX,
    i.e. flows of left-invariant data on $\SU(2)$
    \cite[Section 6.4]{Ellis-Wainwright (1997)}, some of which exhibit
    Mixmaster behavior.
  \end{example}

Let $B_{h(t)}(x,t)$ denote the time-$t$ metric ball of radius $t$ around $x$.

\begin{definition} \label{2.56}
  If ${\mathcal E}$ is a CMC Einstein flow then a sequence
  $\{(x_i, t_i)\}_{i=1}^\infty$ in $X \times (0, t_0]$ is noncollapsing
    if $\vol \left( B_{h(t_i)}(x_i, t_i)  \right) \ge v_0 t_i^n$ for all
    $i$, and some $v_0 > 0$.  We say that ${\mathcal E}$ is noncollapsed
    if there is some $v_0 > 0$ so that for all $(x,t) \in
    X \times (0, t_0]$, we have
      $\vol \left( B_{h(t)}(x, t)  \right) \ge v_0 t^n$.
\end{definition}

Recall the rescaling from (\ref{old1.36}).
We write the rescaled Einstein flow as ${\mathcal E}_s$. It is also type-I,
with the same constant $C$.

\begin{proposition} \label{old1.55}
  Let ${\mathcal E}$ be a type-I
  Einstein flow on an $n$-dimensional manifold $X$.
  Suppose that it is
  defined on a time-interval $(0, t_0]$ and has complete time slices.
    Let
          $\{(x_i, t_i)\}_{i=1}^\infty$ be a noncollapsing sequence in
      $X \times (0, t_0]$ with $\lim_{i \rightarrow \infty} t_i = 0$.
      Then after passing to a subsequence, which we relabel as
      $\{t_i\}_{i=1}^\infty$ and $\{x_i\}_{i=1}^\infty$, there is a limit
      $\lim_{i \rightarrow \infty} {\mathcal E}_{t_i} =
      {\mathcal E}^\infty$
      in the pointed weak $W^{2,p}$-topology and the pointed
      $C^{1,\alpha}$-topology. The limit flow
      ${\mathcal E}^\infty$ is
      defined on the time interval $(0, \infty)$. Its time slices
      $\{(X^\infty, h^\infty(u))\}_{u > 0}$ are complete. Its
      lapse function $L^\infty$ is uniformly bounded below by a
      positive constant.
\end{proposition}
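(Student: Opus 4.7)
The plan is to combine the type-I curvature bound with the noncollapsing hypothesis to apply a Cheeger--Gromov--Hamilton style compactness theorem, adapted to CMC Einstein flows rather than Ricci flows. First I would verify that the rescaled flow ${\mathcal E}_{t_i}$ is still type-I with the same constant $C$: since the curvature scales by $s^2$ under $g_s = s^{-2} g$ and $t = su$, the bound $|\Rm|_T \le C t^{-2}$ becomes $|\Rm_s|_T \le Cu^{-2}$. Likewise, the noncollapsing condition at $(x_i, t_i)$ transforms into $\vol_{h_{t_i}^{\ast}(1)}(B(x_i, 1)) \ge v_0$, uniformly in $i$. From the constraint (\ref{old1.2}), the type-I bound also controls $|K^0|$ by $C' u^{-1}$.

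Next I would obtain two-sided bounds on the lapse that are uniform under rescaling. For the upper bound, (\ref{old1.14}) gives $L \le \frac{n}{H^2} \partial_t H = 1$ in Hubble time. For the lower bound, apply the minimum principle to (\ref{old1.13}) rewritten as $-\triangle_h L + L(|K^0|^2 + H^2/n) = \partial_t H$: at a spatial minimum, $L \ge (\partial_t H)/(|K^0|^2 + H^2/n)$, and the type-I bound on $|K^0|^2$ yields $L \ge c(n,C) > 0$. These bounds are scale-invariant, so they hold for each ${\mathcal E}_{t_i}$ on its full time interval $(0, t_0/t_i]$, which exhausts $(0, \infty)$ as $i \to \infty$.

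The compactness argument then proceeds as follows. Fix a compact time interval $S = [u_-, u_+] \subset (0, \infty)$. On $S$, the rescaled spatial metrics $h_{t_i}(u)$ enjoy uniform bounds on the Riemann tensor and, via (\ref{old1.4}) with $L$ and $K$ bounded, the metrics at different times in $S$ are uniformly bi-Lipschitz to each other. Together with the noncollapsing at $u=1$ (which propagates to all of $S$ by the same volume-derivative estimate), standard Cheeger--Gromov $C^{1,\alpha}$-compactness gives a subsequential limit of pointed time-$u$ slices for each fixed $u$, with complete limit slices. Feeding the $C^{1,\alpha}$ bounds on $h$ and the uniform bounds on $K$ into the elliptic equation for $L$ (which has bounded coefficients in the rescaled variables) yields $W^{2,p}$ estimates on $L$, and then the evolution equations (\ref{old1.4})--(\ref{old1.5}) bootstrap to give the required weak $W^{2,p}$ regularity of $h$, $L$ and $W^{1,p}$ regularity of $K$ jointly in space and time. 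A diagonal argument over an exhaustion of $(0,\infty)$ by compact intervals, together with an exhaustion of the pointed limit manifold, produces a single subsequence whose rescaled flows converge in the pointed weak $W^{2,p}$- and pointed $C^{1,\alpha}$-topologies to a limit ${\mathcal E}^\infty$ defined on all of $(0,\infty)$. The uniform lower bound on $L$ passes to the limit.

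The hardest step will be the joint space--time regularity needed to conclude that the limit satisfies the Einstein flow equations in Definition \ref{old1.1}, since the compactness theorems of Cheeger--Gromov type are inherently elliptic. The key input is that in the CMC gauge with type-I bounds, the lapse equation is a \emph{nondegenerate} elliptic equation on each slice with uniformly bounded coefficients, so one can trade time derivatives for spatial ones via the evolution equations and obtain the missing temporal regularity. A secondary technical point is verifying that the limit $X^\infty$ and the diffeomorphisms $\phi_{S,W,k}$ can be constructed time-independently, which follows from choosing the diffeomorphisms on a reference slice (say $u=1$) and extending via the flow of $\partial_t$, which is Lipschitz in the rescaled picture because $L$ and its spatial derivatives are controlled.
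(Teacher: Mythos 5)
Your proposal differs from the paper's route, which is a one-line reference to \cite[Corollary 2.54]{Lott (2018)} and the Lorentzian compactness/regularity theory of Anderson \cite{Anderson (2003)} and Chen--LeFloch \cite{Chen-LeFloch (2009)}. Those references establish directly that a spacetime curvature bound plus noncollapsing controls the local geometry in a $W^{2,p}$- or $C^{1,\alpha}$-sense, which already packages together the bounds on $h$, $K$, and $L$ that you are trying to derive by hand.

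The gap in your argument is the claim that ``From the constraint (\ref{old1.2}), the type-I bound also controls $|K^0|$ by $C' u^{-1}$.'' The Hamiltonian constraint says $|K^0|^2 = R + (1-\tfrac1n) H^2$. This does \emph{not} bound $|K^0|^2$ from the spacetime curvature bound, because it expresses $|K^0|^2$ in terms of the spatial scalar curvature $R$, which is itself uncontrolled at this stage: the twice-contracted Gauss equation in vacuum just gives back $R = |K|^2 - H^2$, which is circular, and the full Gauss equation $\bar R_{ijkl} = R^X_{ijkl} - K_{ik}K_{jl} + K_{il}K_{jk}$ leaves the quadratic $K$-terms on the right. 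Bounding $|K^0|$ from $|\Rm|_T \le C t^{-2}$ actually requires the ``Ricci'' (second-variation) part of the Gauss--Codazzi system: the $\bar R_{0i0j}$ components give a Riccati-type evolution for $t K^0_{ij}$ driven by $t^2 \Rm$, with the trace fixed by the CMC condition, and one integrates this carefully. That ODE estimate (or, equivalently, the gauge-dependent regularity estimates in \cite{Anderson (2003),Chen-LeFloch (2009)}) is the real content of the compactness step and is exactly what the paper is deferring to the references for. Once $|K^0|$ and $R$ are under control, your minimum-principle lower bound on $L$, the scale-invariance observations, the noncollapsing propagation, and the elliptic/parabolic bootstrap with a diagonal argument over compact time intervals are all sound and in the expected spirit of the cited works. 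So the issue is not that your outline is wrong, but that the specific justification you give for the key $|K^0|$ bound is incorrect, and that bound is the crux of the proof rather than a free byproduct of the constraint equation.
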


The proof of Proposition \ref{old1.55} is essentially the same as that of \cite[Corollary 2.54]{Lott (2018)}, which is based on
\cite{Anderson (2001)}. It relies on the fact that the curvature bound,
along with the noncollapsing, implies uniform bounds on the local geometry
in the pointed $W^{2.p}$-topology or the pointed $C^{1,\alpha}$-topology 
\cite{Anderson (2003),Chen-LeFloch (2009)}.

\begin{example} \label{2.58}
  For Example \ref{2.9}(1), ${\mathcal E}^\infty = {\mathcal E}$. For
  Example \ref{2.9}(2), ${\mathcal E}^\infty$ is the product of $\R^{n^\prime}$
  with the
  Lorentzian cone on the $(n-n^\prime)$-dimensional Einstein manifold.
  For Example \ref{2.9}(3), ${\mathcal E}^\infty$ is a Kasner solution
  on $\R^3$ with
  $M = \diag \left( \frac23,\frac23,- \: \frac13 \right)$.
  For Example \ref{2.9}(4), ${\mathcal E}^\infty$ is a Kasner solution on
  $\R^n$ with the same matrix $M$ as the original flow. 
\end{example}

\begin{example} \label{2.59}
  Suppose that
  ${\mathcal E}$ is a Mixmaster flow of Bianchi type IX
  \cite[Section 6.4.3]{Ellis-Wainwright (1997)}.
  As mentioned in Example \ref{2.55}, it is a type-I Einstein flow.
  We don't know if it is necessarily noncollapsing, but let's suppose
  that it is noncollapsing.  We expect that any pointed rescaling limit
  ${\mathcal E}^\infty$ will be a Kasner solution or a
  Bianchi type II Taub solution \cite[Section 9.2.1]{Ellis-Wainwright (1997)}.
    {\it A priori}, the rescaling limit could also be a Mixmaster solution.
    However, numerical evidence indicates that the mixing slows down as
    $t \rightarrow 0$; see \cite[Figure 12]{Creighton-Hobill (1994)},
    which 
    shows the evolution as a function of $\log (- \log t)$.
    (The authors of \cite{Creighton-Hobill (1994)} inform me that the
    vertical axis of Figure 12 should be labelled by $\log N$ instead of
    $Z$.)
  \end{example}

\subsection{Diameter bounds} \label{subsect3.3}

Let ${\mathcal S}$ be the collection of
Einstein flows that generate Lorentzian cones
over compact $n$-dimensional
Riemannian Einstein manifolds with Einstein constant $-(n-1)$.
They are defined on the time interval $(0, \infty)$.

\begin{proposition} \label{2.60}
  Suppose that a type-I Einstein flow ${\mathcal E}$ has
  $\liminf_{t \rightarrow 0} t^{-1} \diam(X, h(t)) < \infty$.
  Then there is a sequence $\{t_i\}_{i=1}^\infty$ with
  $\lim_{i \rightarrow \infty} t_i = 0$ so that as $i \rightarrow
  \infty$, the
  rescaled Einstein flows ${\mathcal E}_{t_i}$ approach ${\mathcal S}$
  in the weak $W^{2,p}$-topology and the $C^{1, \alpha}$-topology.
  \end{proposition}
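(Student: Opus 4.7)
The plan is to use the diameter hypothesis to force compactness of rescaling limits, and then invoke the rigidity part of the Fischer--Moncrief monotonicity (Proposition \ref{old1.15}) to force any such limit to be a Lorentzian cone. First, using the liminf hypothesis, choose a sequence $t_i \to 0$ with $t_i^{-1} \diam(X, h(t_i)) \le D$ for some fixed $D < \infty$. After rescaling, the flows ${\mathcal E}_{t_i}$ at time $u = 1$ have spatial diameter at most $D$, and the type-I bound $|\Rm|_T \le Ct^{-2}$ rescales to a uniform curvature bound.

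For noncollapsing I would appeal to Proposition \ref{old1.15}: with Hubble time $t = -n/H$, the quantity $n^n t^{-n} \vol(X, h(t))$ is monotonically nonincreasing, so as $t \to 0$ it is bounded below by $n^n t_0^{-n} \vol(X, h(t_0)) > 0$. For the rescaled flow this says $\vol(X, h_{t_i}(1)) \ge v_1 > 0$ uniformly in $i$. Bishop--Gromov comparison (using the uniform sectional curvature bound and the diameter bound) then upgrades this to a uniform ball volume lower bound $\vol(B_{h_{t_i}(1)}(x, 1)) \ge v_0 > 0$ valid for all basepoints $x \in X$, so the hypotheses of Proposition \ref{old1.55} are met. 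Extract a subsequential pointed limit ${\mathcal E}^\infty$, defined on $(0, \infty)$ and converging in both the weak $W^{2,p}$ and $C^{1,\alpha}$-topologies. The uniform diameter bound forces $X^\infty$ to be a compact manifold with $\diam(X^\infty, h^\infty(1)) \le D$.

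To identify ${\mathcal E}^\infty$ with an element of ${\mathcal S}$, I would exploit the scale invariance of the Fischer--Moncrief quantity. Since $|\Rm|_T \le Ct^{-2}$ and $\diam(X, h(t_i)) \le Dt_i$, Bishop--Gromov also gives an upper bound $\vol(X, h(t_i)) \le C_1 t_i^n$, so $n^n t^{-n} \vol(X, h(t))$ is bounded above as well, and its monotonicity forces a finite limit $V$ as $t \to 0$. Pulling back by the diffeomorphisms implementing the pointed convergence, and using $C^{1,\alpha}$-convergence of metrics on the compact manifold $X^\infty$, the Fischer--Moncrief quantity of ${\mathcal E}^\infty$ at any $u > 0$ equals $V$; it is therefore constant in $u$. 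By the rigidity clause of Proposition \ref{old1.15}, ${\mathcal E}^\infty$ is a Lorentzian cone over a compact Riemannian Einstein manifold with Einstein constant $-(n-1)$, i.e.\ an element of ${\mathcal S}$. Because the same argument applies to any subsequence of $\{t_i\}$, Definition \ref{old1.51} is verified.

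The main obstacle is ensuring the Fischer--Moncrief quantity passes to the limit with genuine equality: one needs a finite upper bound to rule out $V = \infty$, and one needs that $C^{1,\alpha}$-convergence on $X^\infty$ yields convergence of total spatial volumes and of mean curvatures. Both points are clean here because the rescaled manifolds are compact with uniformly bounded diameter, so the pointed convergence is in fact convergence of compact manifolds and no volume can escape to infinity in the limit. A minor technical check is that the noncollapsing holds at every basepoint so that Proposition \ref{old1.55} can be invoked regardless of the choice of $x^{(k)}$; this follows from the Bishop--Gromov argument above applied uniformly in the basepoint.
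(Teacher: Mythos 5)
Your proof is correct and follows essentially the same route as the paper's: use the diameter hypothesis plus type-I bound to get a compact rescaling limit, show the Fischer--Moncrief quantity of the limit is constant in $u$, and invoke the rigidity clause of Proposition~\ref{old1.15}. Your version is a bit more explicit at two points (the upper volume bound from Bishop--Gromov guaranteeing finiteness of $V$, and the remark that the argument applies to every subsequence as required by Definition~\ref{old1.51}), but these are details the paper leaves implicit rather than genuinely different ideas.
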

\begin{proof}
  Choose a sequence $\{t_i\}_{i=1}^\infty$ with
  $\lim_{i \rightarrow \infty} t_i = 0$ and
  $t_i^{-1} \diam(X, h(t_i)) < D$, for all $i$ and some $D < \infty$.
  As $t^{-n} \vol(X, h(t))$ is monotonically nonincreasing in $t$, it is
  uniformly bounded below on $(0,t_0]$
    by some positive constant. Let $\{x_i\}_{i=1}^\infty$
    be a sequence of points in $X$. Then by the Bishop-Gromov
    inequality, the sequence of points
    $\{(x_i, t_i)\}_{i=1}^\infty$ is noncollapsing for
    ${\mathcal E}$.
      After passing to a subsequence, we can extract a rescaling limit
  $\lim_{i \rightarrow \infty} {\mathcal E}_{t_i} =
  {\mathcal E}^\infty$ on a manifold $X^\infty$ that is compact, since
  $D < \infty$. In particular, $X^\infty$ is diffeomorphic to $X$.

  From the monotonicity of the normalized volume  of ${\mathcal E}$,
  it follows that
  $u^{-n} \vol(X^\infty, h^\infty(u))$ is independent of
  $u \in (0, \infty)$. The claim now follows from Proposition \ref{old1.15},
  whose proof works for $W^{2,p}$-regular metrics.
  \end{proof}

\begin{corollary} \label{2.61}
  If $\dim(X) = 3$ then under the hypotheses of Proposition \ref{2.60}, the
  original flow ${\mathcal E}$ is a
  Lorentzian cone over 
  a compact Riemannian $3$-manifold with constant sectional curvature $-1$.
  \end{corollary}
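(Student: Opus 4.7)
The plan is to combine the cone limit from Proposition \ref{2.60} with a volume comparison to upgrade ``the rescaling limit is a cone'' to ``${\mathcal E}$ is a cone.'' Applying Proposition \ref{2.60} yields a sequence $t_i \to 0$ with ${\mathcal E}_{t_i} \to {\mathcal E}^\infty \in {\mathcal S}$; since the hypothesis forces $X^\infty$ to be compact and diffeomorphic to $X$, and since a closed Einstein $3$-manifold of Einstein constant $-2$ has constant sectional curvature $-1$, this ${\mathcal E}^\infty$ is a Lorentzian cone over a closed hyperbolic $3$-manifold diffeomorphic to $X$. In particular $X$ admits a hyperbolic metric $h_{\mathrm{hyp}}$, and by Mostow rigidity $V_{\mathrm{hyp}} := \vol(X, h_{\mathrm{hyp}})$ depends only on the topology of $X$.

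Write $V(t) = (-H)^{3} \vol(X, h(t))$. The normalized volume of the cone limit is $V^\infty \equiv 27 V_{\mathrm{hyp}}$, and since $V^\infty = \lim_{i \to \infty} V(t_i u)$ for each $u > 0$ while $V$ is monotone, this forces $\lim_{t \to 0} V(t) = 27 V_{\mathrm{hyp}}$. Proposition \ref{old1.15} then supplies the upper bound $V(t) \le 27 V_{\mathrm{hyp}}$ for all $t \in (0, t_0]$. For the matching lower bound I would invoke Perelman's scalar-curvature integral inequality for closed $3$-manifolds admitting a hyperbolic structure,
\begin{equation*}
  \int_X \max(-R_g, 0)^{3/2} \: \dvol_g \: \ge \: 6^{3/2} \: V_{\mathrm{hyp}},
\end{equation*}
applied to $g = h(t)$. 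The Hamiltonian constraint (\ref{old1.2}) yields $R(h(t)) = |K^0|^2 - \tfrac{2}{3} H^2 \ge -6/t^2$, so $\max(-R(h(t)),0) \le 6/t^2$ and the integral inequality reduces to $\vol(X, h(t)) \ge t^3 V_{\mathrm{hyp}}$, i.e.\ $V(t) \ge 27 V_{\mathrm{hyp}}$.

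Combining the two bounds forces $V \equiv 27 V_{\mathrm{hyp}}$ on $(0, t_0]$. The equality case of Proposition \ref{old1.15} then presents ${\mathcal E}$ as a Lorentzian cone over a closed Einstein $3$-manifold of Einstein constant $-2$, equivalently of constant sectional curvature $-1$, which is the assertion of the corollary.

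The main obstacle will be citing Perelman's inequality in a directly usable form; what is really needed is the consequence ``$R_g \ge -6$ implies $\vol(g) \ge V_{\mathrm{hyp}}$'' for metrics on a $3$-manifold admitting a hyperbolic structure. This follows from Perelman's Ricci-flow-with-surgery analysis together with the determination $\sigma(X) = -6 V_{\mathrm{hyp}}^{2/3}$ of the Yamabe invariant of such $X$; if a direct citation proves inconvenient one can instead run a Yamabe-functional argument within the conformal class of the rescaled metric $t^{-2} h(t)$, which by the same computation has scalar curvature $\ge -6$.
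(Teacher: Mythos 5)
Your proposal is correct and follows essentially the same route as the paper: apply Proposition \ref{2.60} to get a compact hyperbolic rescaling limit, invoke Mostow rigidity and the Perelman/Kleiner--Lott volume lower bound $t^{-3}\vol(X,h(t)) \ge V_{\mathrm{hyp}}$ for metrics with $R \ge -6/t^2$, and combine with the Fischer--Moncrief monotonicity of Proposition \ref{old1.15} to force the normalized volume to be constant, so that the equality case applies. The only cosmetic difference is that the paper applies the Perelman bound once at $t_0$ and lets monotonicity plus the limit value do the rest, whereas you state the upper and lower bounds at every $t$.
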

\begin{proof}
  By Proposition \ref{2.60}, there is a hyperbolic metric $h^\infty(1)$ on $X$,
  unique up to isometry by Mostow rigidity.  From the constraint equation
  (\ref{old1.2}), the scalar curvature $R(t_0)$ of $h(t_0)$ satisfies
  $R(t_0) \: \ge \: - \: \frac{6}{t_0^2}$. Then from Perelman's work
  \cite[Section 93.4]{Kleiner-Lott (2008)},
\begin{equation} \label{2.62}
  t_0^{-3} \vol(X, h(t_0)) \ge \vol(X, h^\infty(1)).
\end{equation}
From the existence of the
limiting flow in the proof of Proposition \ref{2.60},
\begin{equation} \label{2.63}
  \lim_{i \rightarrow \infty}
  t_i^{-3} \vol(X, h(t_i)) = \vol(X, h^\infty(1)).
  \end{equation}
Since $t^{-3} \vol(X, h(t))$ is nonincreasing in $t$, it follows that
$t^{-3} \vol(X, h(t)) = \vol(X, h^\infty(1))$ for all $t \in (0, t_0]$.
  The claim now follows from Proposition \ref{old1.15}.
\end{proof}

\subsection{Causal pasts} \label{subsect3.4}

Given $x \in X$ and $t \in [0, t_0)$, let $J_-(x,t)$ denote the
  causal past of $(x,t)$, i.e. the spacetime points that can be reached from
  past-directed timelike or null curves starting from $(x,t)$. 
  The next result is fairly standard but we include it for completeness.
  
\begin{proposition} \label{2.64}
  Let ${\mathcal E}$ be a CMC Einstein flow, defined for Hubble time
  $t \in (0, t_0]$. Suppose that there is some continuous function
    $f : (0, t_0] \rightarrow \R^+$, with
      $\int_0^{t_0} \frac{dt}{f(t)} < \infty$,
so that
$h(t) \ge f^{2}(t) h(t_0)$ for all $t \in (0, t_0]$.
    Then for any $x^\prime \in X$, the causal past $J_-(x^\prime,t)$ satisfies
      $\lim_{t \rightarrow 0} \diam(J_-(x^\prime,t), h(t_0)) = 0$.
  \end{proposition}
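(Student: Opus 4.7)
The plan is to use two ingredients. First, since $t$ is the Hubble time, the lapse bound (\ref{old1.14}) simplifies to $L \le 1$: from $t = -n/H$ we compute $\partial H/\partial t = H^2/n$, so $\frac{n}{H^2} \frac{\partial H}{\partial t} = 1$. Second, the hypothesis $h(\tau) \ge f^2(\tau) h(t_0)$ converts a velocity bound in $h(\tau)$ into one in the reference metric $h(t_0)$.

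I would then argue as follows. Let $\gamma : [s, t] \to X$ be the spatial projection of a past-directed causal curve ending at $(x', t)$, parametrized by the time coordinate $\tau$. The causality condition, in the form $-L^2 + |\dot\gamma|_{h(\tau)}^2 \le 0$, gives $|\dot\gamma(\tau)|_{h(\tau)} \le L(\tau) \le 1$; hence $|\dot\gamma(\tau)|_{h(t_0)} \le f(\tau)^{-1}$. Integrating,
\begin{equation*}
\length_{h(t_0)}(\gamma) \: \le \: \int_s^t f(\tau)^{-1} \: d\tau \: \le \: \int_0^t f(\tau)^{-1} \: d\tau.
\end{equation*}

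Therefore every point in the spatial projection of $J_-(x', t)$ lies within $h(t_0)$-distance $\int_0^t f(\tau)^{-1} d\tau$ of $x'$, so by the triangle inequality the $h(t_0)$-diameter of this projection is at most $2 \int_0^t f(\tau)^{-1} \: d\tau$. Since $1/f \in L^1(0, t_0)$, absolute continuity of the Lebesgue integral forces this bound to tend to $0$ as $t \to 0$, which is the claim.

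There is essentially no obstacle: the statement is a clean consequence of the lapse bound and the metric comparison. The only mild point is interpreting $\diam(J_-(x', t), h(t_0))$ as the $h(t_0)$-diameter of the image of $J_-(x', t) \subset (0, t_0] \times X$ under the spatial projection, which is the natural reading since $h(t_0)$ lives on $X$.
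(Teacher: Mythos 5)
Your proof is correct and follows essentially the same route as the paper's: bound the spatial speed of a past causal curve by $L \le 1$ via (\ref{old1.14}), convert to the reference metric $h(t_0)$ using $h(\tau) \ge f^2(\tau) h(t_0)$, integrate, and invoke $1/f \in L^1$. Your explicit factor of $2$ from the triangle inequality is in fact slightly more careful than the paper's stated bound $\diam(J_-(x',t), h(t_0)) \le \int_0^t ds/f(s)$, though this is immaterial for the limit.
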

\begin{proof}
  A past causal curve $\gamma(s) = (x(s), s)$ satisfies
  $- L^2 + h_s \left( \frac{dx}{ds}, \frac{dx}{ds} \right) \le 0$.
  By (\ref{old1.14}), we have
  $h_s \left( \frac{dx}{ds}, \frac{dx}{ds} \right) \le L^2 \le 1$.
  The length of $\gamma$ with respect to
    $h(t_0)$ satisfies
    \begin{equation} \label{2.65}
      {\mathcal L} = \int_0^{t} \sqrt{h_{t_0}
      \left( \frac{dx}{ds}, \frac{dx}{ds} \right)} \: ds =
      \int_0^{t} \sqrt{
        \frac{h_{t_0}
        \left( \frac{dx}{ds}, \frac{dx}{ds} \right)}{h_{s}
        \left( \frac{dx}{ds}, \frac{dx}{ds} \right)}
      }
      \: \sqrt{h_{s}
      \left( \frac{dx}{ds}, \frac{dx}{ds} \right)} \: ds \le
      \int_0^{t} \frac{ds}{f(s)}.
    \end{equation}
Hence $\diam(J_-(x^\prime,t), h(t_0)) \le \int_0^{t} \frac{ds}{f(s)}$. 
    The proposition follows.
  \end{proof}

\begin{example} \label{2.66}
  The Kasner solution of Example \ref{2.9}(4) satisfies the hypotheses of
  Proposition \ref{2.64} provided that the eigenvalues of $M$ are
  strictly less than one.
  \end{example}

Under the assumptions of Proposition \ref{2.64}, for any distinct
$x, x^\prime \in X$, if $t$ is
small enough then $J_-(x,t)$ and  $J_-(x^\prime,t)$ are disjoint.
This is not true for general CMC Einstein flows.  However, we show that
one can often find many points whose causal pasts are disjoint
on a relatively long time interval.

\begin{proposition} \label{2.67}
  Let ${\mathcal E}$ be a noncollapsed type-I CMC Einstein flow with
  $\lim_{t \rightarrow 0} t^{-1} \diam(X, h(t)) = \infty$. Given
  $N \in \Z^+$, $\Lambda > 1$ and $x^\prime \in X$, there is some $\widehat{t} \in
  (0, t_0]$ with the following property.  Given $t \in (0, \widehat{t}]$,
      there are $N$ points $\{x_j\}_{=1}^N$ in $X$, with $x_1 = x^\prime$,
      so that if $j \neq
      j^\prime$ then the causal pasts $J^-(x_j,t)$ and $J^-(x_{j^\prime},t)$
      are disjoint on the time interval $[\Lambda^{-1} t, t]$.
\end{proposition}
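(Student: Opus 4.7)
The plan is to combine two ingredients: a uniform upper bound on the spatial size of backwards cones coming from the type-I and noncollapsing hypotheses, and the possibility of finding many well-separated points on $(X, h(t))$ for small $t$ coming from the diameter hypothesis. First I would observe that the type-I bound $|\Rm|_T \le C t^{-2}$ and noncollapsing, via the regularity theory behind Proposition \ref{old1.55}, give a constant $D = D(\Lambda, C, v_0) > 0$ (independent of $t$) such that for every $t \in (0, t_0]$ and $u \in [\Lambda^{-1},1]$, the rescaled lapse satisfies $L_t(u) \le D$ and the rescaled spatial metrics $h_t(u)$ are mutually $D$-bi-Lipschitz. Indeed, if this failed, Proposition \ref{old1.55} applied to a sequence of counterexamples would produce a limit flow whose lapse and metric contradicted that failure.

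Translating this back, a past causal curve $\gamma(s) = (x(s), s)$ from $(x,t)$ satisfies $|\dot{x}|_{h(s)} \le L(s)$, so after passing to the rescaling and using the bi-Lipschitz comparison, the length of $\gamma$ measured in $h(t)$ over the interval $s \in [\Lambda^{-1} t, t]$ is at most $D_1 t$ for some $D_1 = D_1(\Lambda, C, v_0)$. Thus $J^-(x, t) \cap \{s \in [\Lambda^{-1} t, t]\}$ projects into the $h(t)$-ball $B_{h(t)}(x, D_1 t)$. Next, since $\lim_{t \to 0} t^{-1}\diam(X, h(t)) = \infty$, for $t$ sufficiently small, $\diam(X, h(t)) > 4 N D_1 t$, and by the triangle inequality there is a point $y \in X$ with $d_{h(t)}(x', y) > 2 N D_1 t$. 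Along a minimizing $h(t)$-geodesic from $x'$ to $y$, place $N$ points $x_1 = x', x_2, \ldots, x_N$ at equal consecutive spacing strictly greater than $2 D_1 t$; then $d_{h(t)}(x_j, x_{j'}) > 2 D_1 t$ for all $j \ne j'$. If a point $(y, s)$ with $s \in [\Lambda^{-1} t, t]$ lay in both $J^-(x_j, t)$ and $J^-(x_{j'}, t)$, then $d_{h(t)}(x_j, y), d_{h(t)}(x_{j'}, y) \le D_1 t$, forcing $d_{h(t)}(x_j, x_{j'}) \le 2 D_1 t$, a contradiction. Hence the causal pasts are pairwise disjoint on $[\Lambda^{-1} t, t]$.

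The main obstacle is establishing the uniform upper bound on $L_t$; the curvature and volume hypotheses control the metric in $C^{1,\alpha}$, but the lapse is only controlled through the elliptic equation \eqref{old1.13}, whose coefficients become uniformly bounded after rescaling. The bound then follows either from the maximum-principle argument leading to \eqref{old1.14} combined with the rescaled curvature bound, or by a qualitative contradiction: an unbounded sequence of rescaled lapses would, via Proposition \ref{old1.55}, extract to a limiting flow with finite lapse, contradicting the assumed blow-up. Once this uniform lapse bound is in place, the rest of the argument is a clean application of triangle inequality and the divergent-diameter hypothesis.
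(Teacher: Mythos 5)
Your proof is correct, and it takes a genuinely different route from the paper's. The paper argues by contradiction: assuming failure along a sequence $t_i \to 0$, it extracts a pointed rescaling limit ${\mathcal E}^\infty$, notes that the diameter hypothesis forces $X^\infty$ to be noncompact, finds $N$ points in $X^\infty$ that are pairwise far apart and hence have disjoint causal pasts on $[\Lambda^{-1},1]$, and then pulls these back via Gromov--Hausdorff approximants to contradict the assumed failure. You instead argue directly: establish a $t$-uniform bound $D_1$ on the $h(t)$-radius of the spatial projection of $J^-(x,t)$ over $[\Lambda^{-1}t,t]$, then use the diverging normalized diameter to place $N$ points $>2D_1 t$ apart along a minimizing $h(t)$-geodesic emanating from $x'$. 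Both proofs ultimately rest on the same uniform regularity theory behind Proposition \ref{old1.55}; yours makes the causal-past radius bound explicit and quantitative rather than extracting it from a limit, which is arguably more transparent, while the paper's compactness template is the one it reuses in Propositions \ref{2.70} and \ref{2.71}.

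One remark on your stated ``main obstacle.'' The uniform upper bound on the (rescaled) lapse is not actually the delicate point: with $t$ the Hubble time $t = -n/H$, the maximum-principle estimate \eqref{old1.14} gives $L \le 1$ for \emph{any} CMC Einstein flow, with no type-I or noncollapsing hypothesis, and this passes directly to $L_t(u) = L(tu) \le 1$. What genuinely requires the type-I bound and noncollapsing (via \cite{Anderson (2003),Chen-LeFloch (2009)}) is the uniform bi-Lipschitz comparison between $h(s)$ for $s\in[\Lambda^{-1}t,t]$ and $h(t)$ -- equivalently a bound $|K|_{h(s)} \le C'/s$ -- since $K$ is a first derivative of the metric and is not controlled by the spacetime curvature bound alone. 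Your contradiction/compactness fallback for the lapse is therefore unnecessary, but the same fallback applied to the slice-to-slice metric comparison is exactly what is needed, and it works.
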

\begin{proof}
  Given ${\mathcal E}$, suppose that the proposition fails. Then for some
  $N \in \Z^+$ and $\Lambda > 1$, there is a sequence of times
  $\{t_i\}_{i=1}^\infty$ with $\lim_{i \rightarrow \infty} t_i = 0$ so that
  the proposition fails for $t = t_i$.
  After passing to a subsequence, we
  can take a pointed rescaling limit $\lim_{i \rightarrow \infty} {\mathcal E}_{t_i} = {\mathcal E}^\infty$, that exists for times
  $u \in (0, \infty)$. By the diameter assumption, $X^\infty$ is noncompact.
  Because of the uniformly bounded $C^{1,\alpha}$-geometry of
  ${\mathcal E}^\infty$ on the time
  interval $[\Lambda^{-1}, 1]$
  (see the comments after Proposition \ref{old1.55}),
  there is some $R < \infty$ so that if
  $p,p^\prime \in X^\infty$ have $d_{h^\infty(1)}(p, p^\prime) \ge R$ then
    $J^-(p,1)$ and $J^-(p^\prime,1)$ are disjoint on the time interval
  $[\Lambda^{-1}, 1]$. Choose points $\{p_j\}_{j=1}^N$ in $X^\infty$,
  with $p_1 = x^\infty$, so that
    $d_{h^\infty(1)}(p_j, p_{j^\prime}) \ge 2R$ for $j \neq j^\prime$.
      For large $i$, let $\{x_{i,j}\}_{j=1}^N$ be points in
      $(X, t_i^{-2} h(t_i))$
      that are Gromov-Hausdorff approximants to the points $\{p_j\}_{j=1}^N$ in
      $(X_\infty, h^\infty(1))$, with $x_{i,1} = x^\prime$.
      From the $C^{1,\alpha}$-convergence
      when taking the rescaling limit, we conclude that for large $i$,
      if $j \neq
      j^\prime$ then the causal pasts $J^-(x_{i,j},t_i)$ and
      $J^-(x_{i,j^\prime},t_i)$
      are disjoint on the time interval $[\Lambda^{-1} t_i, t_i]$.
      This is a contradiction.
\end{proof}

In the three dimensional case, we can strengthen the conclusion of
Proposition \ref{2.67}.

\begin{corollary} \label{2.68}
  Let ${\mathcal E}$ be a noncollapsed type-I CMC Einstein flow with
  $\dim(X) = 3$. Then either
  \begin{enumerate}
      \item ${\mathcal E}$ is a
  Lorentzian cone over 
  a compact Riemannian $3$-manifold with constant sectional curvature $-1$, or
  \item Given
  $N \in \Z^+$, $\Lambda > 1$ and $x^\prime \in X$, there is some $\widehat{t} \in
  (0, t_0]$ with the following property.  Given $t \in (0, \widehat{t}]$,
      there are $N$ points $\{x_j\}_{j=1}^N$ in $X$, with $x_1 = x^\prime$, so that if $j \neq
      j^\prime$ then the causal pasts $J^-(x_j,t)$ and $J^-(x_{j^\prime},t)$
      are disjoint on the time interval $[\Lambda^{-1} t, t]$.
      \end{enumerate}
\end{corollary}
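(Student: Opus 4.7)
The strategy is a clean dichotomy based on the asymptotic behavior of the normalized diameter $t^{-1}\diam(X, h(t))$ as $t \to 0$, combining Corollary \ref{2.61} and Proposition \ref{2.67}, which together cover the two possible regimes.

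First I would split on whether $\liminf_{t\to 0} t^{-1}\diam(X, h(t))$ is finite or infinite. In the finite case, Corollary \ref{2.61} applies directly (its hypotheses are exactly that ${\mathcal E}$ be type-I with $\dim X = 3$ and $\liminf_{t\to 0} t^{-1}\diam(X, h(t)) < \infty$), and it produces conclusion (1): ${\mathcal E}$ is a Lorentzian cone over a compact hyperbolic $3$-manifold. Note that in this branch the noncollapsing hypothesis of Corollary \ref{2.68} is not even needed, since Corollary \ref{2.61} already uses only the monotonicity of $t^{-n}\vol(X,h(t))$ together with Bishop--Gromov to generate the relevant noncollapsing automatically.

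In the complementary case, $\liminf_{t \to 0} t^{-1}\diam(X, h(t)) = \infty$, which forces $\lim_{t \to 0} t^{-1}\diam(X, h(t)) = \infty$. Now the hypotheses of Proposition \ref{2.67} are satisfied (noncollapsed, type-I, CMC, divergent normalized diameter), so given $N$, $\Lambda$ and $x^\prime$, Proposition \ref{2.67} yields the desired $\widehat{t}$ and the $N$ points $\{x_j\}_{j=1}^N$ with $x_1 = x^\prime$ whose causal pasts on $[\Lambda^{-1}t, t]$ are mutually disjoint, giving conclusion (2).

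There is essentially no obstacle beyond verifying that the two regimes fit together without gap: the only subtlety is that Proposition \ref{2.67} is stated with a true limit $\lim_{t\to 0} t^{-1}\diam(X, h(t)) = \infty$ rather than $\liminf = \infty$, but these are equivalent, so the two cases are genuinely exhaustive. The three-dimensionality is used only in the first branch, through Corollary \ref{2.61} (which in turn invokes Perelman's volume comparison and Mostow rigidity); the second branch works in any dimension provided the noncollapsing and type-I assumptions hold. Thus the whole proof amounts to a single sentence of case analysis once the two earlier results are in place.
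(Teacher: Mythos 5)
Your proof is correct and follows exactly the paper's argument: the paper's one-line proof cites Corollary \ref{2.61} and Proposition \ref{2.67}, and you have simply made explicit the dichotomy on $\liminf_{t\to 0} t^{-1}\diam(X,h(t))$ that links them. No issues.
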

\begin{proof}
  This follows from Corollary \ref{2.61} and Proposition \ref{2.67}.
  \end{proof}

\begin{example} \label{2.69}
  \begin{enumerate}
  \item If ${\mathcal E}$ is a Lorentzian cone over a compact
    hyperbolic $3$-manifold $X$ then there is some $\Lambda > 0$ so
    that for any $x \in X$ and any $t \in (0, t_0]$, the intersection
    of $J^-(x,t)$ with the slice at time $\Lambda^{-1} t$ is all of
    $X$. This shows that the two cases in the conclusion of Corollary
    \ref{2.68} are distinct.
  \item Suppose that ${\mathcal E}$ is a Kasner solution as in
    Example \ref{2.66}.  Using the spatial homogeneity, we can strengthen
    the conclusion of Corollary \ref{2.68} to say that
    $J^-(x_j, t)$ and $J^-(x_{j^\prime}, t)$ are disjoint on the time
    interval $(0, t]$. As $N \rightarrow \infty$, we can assume that
      the points
      $\{x_j\}_{j=1}^N$ become uniformly distributed on $X$.
  \item Suppose that ${\mathcal E}$ is the product of
    $T^2$ with the Lorentzian cone over a circle.  Given a point
    $x = (x_{T^2}, x_{S^1}) \in X$, we can take the points $\{x_j\}_{j=1}^N$
    to lie on $T^2 \times \{x_{S^1}\}$ and we can strengthen
    the conclusion of Corollary \ref{2.68} to say that
    $J^-(x_j, t)$ and $J^-(x_{j^\prime}, t)$ are disjoint on the time
    interval $(0, t]$. 
  \item Let $\widetilde{\mathcal E}$ be the product of $\R^2$ with the
    Lorentzian cone over $\R$. Let $\Gamma$ be a lattice in $\R^3$ with
    irrational entries.  Let ${\mathcal E}$ be the
    $\Gamma$-quotient of $\widetilde{\mathcal E}$, an Einstein flow on
    $T^3$. We cannot strengthen
    the conclusion of Corollary \ref{2.68} to say that
    $J^-(x_j, t)$ and $J^-(x_{j^\prime}, t)$ are disjoint on the time
    interval $(0, t]$.
    \end{enumerate}
  \end{example}

We now localize Proposition \ref{2.67}
to an arbitrary open subset $U$ of $X$.

\begin{proposition} \label{2.70}
  Let ${\mathcal E}$ be a noncollapsed type-I CMC Einstein flow.
  Given $N \in \Z^+$, $\Lambda > 1$, $\epsilon > 0$, $\alpha \in (0,1)$,
  an open set $U \subset X$ and a point $x^\prime \in U$, there is
  some $\widehat{t} \in (0, t_0]$ with the following property. For $t \in (0, \widehat{t}]$,
    \begin{enumerate}
    \item     The rescaled pointed flow ${\mathcal E}_{t}$ on $(X, x^\prime)$ is $\epsilon$-close in the
      pointed $C^{1,\alpha}$-topology to a Lorentzian cone 
      over a Riemannian Einstein metric with Einstein constant $-(n-1)$, having $U$
      as a bounded subset of the approximation region, or
 \item There are $N$ points $\{x_j\}_{=1}^N$ in $U$, with $x_1 = x^\prime$, so that if $j \neq
      j^\prime$ then the causal pasts $J^-(x_j,t)$ and $J^-(x_{j^\prime},t)$
      are disjoint on the time interval $[\Lambda^{-1} t, t]$.      
      \end{enumerate}
  \end{proposition}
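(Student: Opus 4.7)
The plan is to argue by contradiction, following the strategy of Proposition~\ref{2.67} but localized to $U$. Suppose the conclusion fails for some fixed $N$, $\Lambda$, $\epsilon$, $\alpha$, $U$, and $x'$; then one can extract a sequence $t_i \to 0$ at which both (1) and (2) fail at $t = t_i$. Using noncollapsedness and the type-I hypothesis, I would apply Proposition~\ref{old1.55} to pass to a subsequence along which ${\mathcal E}_{t_i} \to {\mathcal E}^\infty$ in both the pointed weak $W^{2,p}$ and $C^{1,\alpha}$ topologies, with ${\mathcal E}^\infty$ a pointed CMC Einstein flow on $(X^\infty, x^\infty)$ defined on $(0, \infty)$ and having complete time slices.

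I would then split into two cases. If $X^\infty$ is compact, then rescaling invariance of $(-H)^n \vol$ combined with its monotonicity (Proposition~\ref{old1.15}) forces $u^{-n} \vol(X^\infty, h^\infty(u))$ to be constant in $u$, so the limit ${\mathcal E}^\infty$ is a Lorentzian cone over a compact Riemannian Einstein $n$-manifold with Einstein constant $-(n-1)$; for large $i$, pointed $C^{1,\alpha}$-convergence yields $\epsilon$-closeness of ${\mathcal E}_{t_i}$ to this cone on any prescribed ball around $x'$ in the rescaled metric, and since $X^\infty$ has bounded diameter the approximation region can be taken to contain $U$, giving (1). If instead $X^\infty$ is noncompact, I would mimic Proposition~\ref{2.67}: using the uniform $C^{1,\alpha}$ control on ${\mathcal E}^\infty$ over $[\Lambda^{-1}, 1]$, pick $R < \infty$ so that any pair of points of $X^\infty$ at $h^\infty(1)$-distance $\geq R$ have disjoint causal pasts on $[\Lambda^{-1}, 1]$, choose $\{p_j\}_{j=1}^N \subset X^\infty$ with $p_1 = x^\infty$ and pairwise distances $\geq 2R$, and pull back via the pointed diffeomorphisms $\phi_i$ to obtain $x_{i,j} = \phi_i(p_j) \in X$ with $x_{i,1} = x'$, rescaled pairwise distances $\geq 2R - o(1)$, and hence causal pasts $J^-(x_{i,j}, t_i)$ that are pairwise disjoint on $[\Lambda^{-1} t_i, t_i]$. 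This would verify (2) and yield the desired contradiction, provided $x_{i,j} \in U$ for large $i$.

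The hard part is ensuring this last membership. The key observation is that an escape direction of a noncompact $X^\infty$ must arise from a direction of $X$ in which $h(t_i)$ shrinks at rate $t_i^2$ and in which the topology of $X$ ``unfolds'' under rescaling, so that any bounded region of $X^\infty$ pulls back to an $X$-region around $x'$ that shrinks to $\{x'\}$ in the manifold topology as $i \to \infty$. Concretely, since $X^\infty$ possesses at least one direction of noncompactness, one can place the $p_j$ in such an escape direction, forcing $\phi_i(p_j) \to x'$ in $X$ and hence $\phi_i(p_j) \in U$ for large $i$. A rigorous proof would construct the $\phi_i$ from exponential charts at $x'$ in $t_i^{-2} h(t_i)$ and invoke the bounded geometry furnished by Proposition~\ref{old1.55}; the Kasner solutions of Example~\ref{2.9}(4) illustrate the mechanism at work in the noncompact case.
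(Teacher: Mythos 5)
Your contradiction setup and use of Proposition~\ref{old1.55} match the paper, but your dichotomy is wrong in a way that leaves a genuine gap. You split on whether $X^\infty$ is compact or noncompact, whereas the paper splits on whether $\liminf_{i\to\infty} t_i^{-1}\diam(U,h(t_i))$ is finite or infinite. The case your argument cannot handle is when $X^\infty$ is noncompact but $U$ has uniformly bounded diameter in the rescaled metrics $t_i^{-2}h(t_i)$. There your first branch doesn't apply (no compact limit to feed into Proposition~\ref{old1.15}), and your second branch fails at exactly the step you flagged: the image of $U$ stays inside a fixed ball $B(x^\infty,2D)$ in $(X^\infty,h^\infty(1))$, so there is no way to guarantee that points $p_j$ with pairwise distance $\geq 2R$ have preimages in $U$. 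Your heuristic that ``any bounded region of $X^\infty$ pulls back to an $X$-region around $x'$ that shrinks to $\{x'\}$'' is also false as stated: for the Kasner flow with exponents $(1,0,\dots,0)$ the rescaled limit is $S^1\times\R^{n-1}$, and a bounded region of $X^\infty$ containing the compact $S^1$-factor pulls back to a set of $X$ containing a fixed $S^1$, which does not shrink.

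The paper closes precisely this gap with a local argument that your proposal lacks. When $t_i^{-1}\diam(U,h(t_i))$ stays bounded, the rescaled image of $U$ sits in a fixed ball, so $t_i^{-n}\vol(U,h(t_i))$ is uniformly bounded. Combining that with the pointwise monotonicity of $t^{-n}\dvol_{h(t)}$ coming from (\ref{old1.16}) and (\ref{old1.14}), and with the monotone convergence theorem, yields a point $\widetilde x\in U$ with $\lim_{t\to 0}t^{-n}\dvol_{h(t)}(\widetilde x)<\infty$. This forces the limit flow's normalized volume element to be $u$-independent at $\widetilde x^\infty$, and the strong maximum principle, as in \cite[Proposition 3.5]{Lott (2018)}, then shows that ${\mathcal E}^\infty$ is a Lorentzian cone over a (possibly noncompact) Einstein manifold with Einstein constant $-(n-1)$; note that conclusion~(1) of the proposition is deliberately stated so as to allow a noncompact approximating cone. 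Your compact-$X^\infty$ branch recovers only the special compact subcase of this. To repair your proof, adopt the paper's dichotomy on $\diam(U,h(t_i))/t_i$ and insert this local volume/maximum-principle argument for the bounded-diameter branch; your Proposition~\ref{2.67}-style argument is then only needed when $\diam(U,h(t_i))/t_i\to\infty$, in which case the $p_j$ can be chosen inside the unbounded image of $U$ and the membership $\phi_i(p_j)\in U$ is automatic.
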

\begin{proof}
    Given ${\mathcal E}$, suppose that the proposition fails. Then for some
    $N \in \Z^+$, $\Lambda > 1$, $\epsilon > 0$,
    $\alpha \in (0,1)$, $U \subset X$ and $x^\prime \in U$, there is a sequence of times
  $\{t_i\}_{i=1}^\infty$ with $\lim_{i \rightarrow \infty} t_i = 0$ so that
    the proposition fails for $t = t_i$.  In particular, for each $i$, the rescaled pointed flow
    ${\mathcal E}_{t_i}$ is not $\epsilon$-close to a Lorentzian cone as described in case (1).

    Suppose first that $\liminf_{i \rightarrow \infty} t_i^{-1} \diam(U, h(t_i)) < \infty$. Choose $D < \infty$ so that
    after passing to a subsequence, we have $t_i^{-1} \diam(U, h(t_i)) \le D$ for all $i$.
With the basepoint $x^\prime$, after passing to a subsequence, we
  can take a pointed rescaling limit $\lim_{i \rightarrow \infty} {\mathcal E}_{t_i} = {\mathcal E}^\infty$, that exists for times
  $u \in (0, \infty)$. For large $i$, the Gromov-Hausdorff approximants of $(U, t_i^{-2} h(t_i))$ lie in
  $B(x^\infty, 2D) \subset (X^\infty, h^\infty(1))$. Hence there is a uniform upper bound
  $t_i^{-n} \vol(U, h(t_i)) \le V < \infty$. Now $t^{-n} \dvol_{h(t)}$ is pointwise nonincreasing in $t$, on $U$.
  From the monotone convergence theorem, there is some $\widetilde{x}\in U$ so that
  $\lim_{t \rightarrow 0} t^{-n} \dvol_{h(t)}(\widetilde{x}) < \infty$.
  It follows from the strong maximum principle, as in
  \cite[Proposition 3.5]{Lott (2018)}, that ${\mathcal E}^\infty$ is a
  Lorentzian cone over a Riemannian Einstein metric on $U$ with Einstein constant $-(n-1)$. Then for large $i$,
  the rescaled pointed flow
  ${\mathcal E}_{t_i}$ is $\epsilon$-close to the Lorentzian cone, which is a contradiction. 

  Hence $\lim_{i \rightarrow 0} t_i^{-1} \diam(U, h(t_i)) = \infty$. We can now apply the argument in the
  proof of Proposition \ref{2.67}.
  Hence for large $i$, case (2) is satisfied.  This is a contradiction.
\end{proof}

We now prove a measure-theoretic version.

\begin{proposition} \label{2.71}
  Let ${\mathcal E}$ be a noncollapsed type-I CMC Einstein flow.
  Given $\Lambda > 1$, $\alpha \in (0,1)$ and $\epsilon > 0$, there are
  some $\widehat{t} \in (0, t_0]$ and $\Lambda^\prime < \infty$ with the following property.
    Choose $t \in (0, \widehat{t}]$ and $x \in X$. Let $V$ be the set of points $y \in X$
such that
      the causal pasts $J^-(x,t)$ and $J^-(y,t)$
      intersect on the time interval $[\Lambda^{-1} t, t]$.
      Then
      \begin{enumerate}
    \item
      $\vol(V, h(t_0)) \le \epsilon t_0^n$, or
    \item
      There are some $t^\prime \in [t,  \Lambda^\prime t]$ and $y \in V$ so that
      the rescaled pointed flow ${\mathcal E}_{t^\prime}$ on $(X, y)$ is $\epsilon$-close
         in the
      pointed $C^{1,\alpha}$-topology to a Lorentzian cone 
      over a Riemannian Einstein metric with Einstein constant $-(n-1)$.
            \end{enumerate}
  \end{proposition}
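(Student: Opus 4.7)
The plan is to argue by contradiction, paralleling the compactness strategy of the proof of Proposition \ref{2.70}. Suppose the conclusion fails; then there exist sequences $\widehat{t}_i \to 0$ and $\Lambda^\prime_i \to \infty$, together with $t_i \in (0, \widehat{t}_i]$ and $x_i \in X$, such that the corresponding sets $V_i$ satisfy $\vol(V_i, h(t_0)) > \epsilon t_0^n$ and, for every $y \in V_i$ and every $t^\prime \in [t_i, \Lambda^\prime_i t_i]$, the rescaled pointed flow $\mathcal{E}_{t^\prime}$ at $y$ is not $\epsilon$-close in the pointed $C^{1,\alpha}$-topology to any Lorentzian cone over an Einstein manifold with Einstein constant $-(n-1)$. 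By Proposition \ref{old1.55}, after passing to a subsequence the pointed flows $\mathcal{E}_{t_i}$ based at $x_i$ converge in the pointed $C^{1,\alpha}$-topology to a noncollapsed type-I limit $\mathcal{E}^\infty$ on $(X^\infty, x^\infty)$ defined for $u \in (0, \infty)$. By the causal argument in the proof of Proposition \ref{2.67} together with the type-I bound and $L \le 1$, the sets $V_i$ are contained in a common rescaled ball $B_{h_{t_i}(1)}(x_i, D)$ for some $D = D(\Lambda)$, so (after a further subsequence) they converge to a bounded measurable subset $V^\infty \subset X^\infty$. Integrating the pointwise monotonicity of $(-H)^n \dvol_h$ (Proposition \ref{old1.15}) over $V_i$ gives $(n/t_i)^n \vol(V_i, h(t_i)) \ge (n/t_0)^n \vol(V_i, h(t_0)) > n^n \epsilon$, and hence $\vol(V^\infty, h^\infty(1)) \ge \epsilon$.

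Next, I pass condition (b) to the limit. If there were some $y^\infty \in V^\infty$ and some $u^\prime \in [1, \infty)$ with $(\mathcal{E}^\infty)_{u^\prime}$ at $y^\infty$ being $\epsilon/2$-close to a Lorentzian cone, then using $(\mathcal{E}_{t_i})_{u^\prime} = \mathcal{E}_{u^\prime t_i}$, the fact that $u^\prime t_i \in [t_i, \Lambda^\prime_i t_i]$ for large $i$ (since $\Lambda^\prime_i \to \infty$), and the $C^{1,\alpha}$-convergence, the flow $\mathcal{E}_{u^\prime t_i}$ at a suitable $y_i \to y^\infty$ with $y_i \in V_i$ would be $\epsilon$-close to a cone for large $i$, contradicting (b). Hence in the limit: for every $y \in V^\infty$ and every $u^\prime \in [1, \infty)$, $(\mathcal{E}^\infty)_{u^\prime}$ at $y$ is not $\epsilon/2$-close to any Lorentzian cone. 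To derive a contradiction, fix $y^\infty \in V^\infty$ and form a tangent flow at infinity of $\mathcal{E}^\infty$ as a subsequential rescaling limit of $(\mathcal{E}^\infty)_{u_k^\prime}$ based at $y^\infty$ with $u_k^\prime \to \infty$; existence follows from noncollapsing and type-I. Letting $F^\infty(y^\infty) := \lim_{u \to \infty} u^{-n} \dvol_{h^\infty(u)}(y^\infty)$ (the finite infimum of the pointwise monotonic quantity), a direct rescaling computation shows that the monotonic quantity of the tangent flow is identically $F^\infty(y^\infty)$ along $\{y^\infty\} \times (0, \infty)$. Provided $F^\infty(y^\infty) > 0$, the strong maximum principle argument of \cite[Proposition 3.5]{Lott (2018)} applies in the tangent flow and forces Lorentzian cone structure over an Einstein manifold on a spacetime neighborhood of $y^\infty$; consequently, for $u_k^\prime$ large, $(\mathcal{E}^\infty)_{u_k^\prime}$ at $y^\infty$ is $\epsilon/2$-close to that cone, contradicting the transferred form of (b).

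The main obstacle is securing a point $y^\infty \in V^\infty$ with $F^\infty(y^\infty) > 0$. The pointwise monotonicity only yields $\int_{V^\infty} F^\infty \le \vol(V^\infty, h^\infty(1))$, which does not exclude the Kasner-like degeneration $F^\infty \equiv 0$ on $V^\infty$, in which the density concentrates away pointwise as $u \to \infty$ while ball-volumes remain bounded below by noncollapsing. I expect that this degenerate case can be ruled out by an auxiliary argument showing that, in a noncollapsed type-I Einstein flow with $F^\infty \equiv 0$ on $V^\infty$, the causal pasts of distinct points in $V^\infty$ become disjoint on $[\Lambda^{-1}, 1]$ (much as in Example \ref{2.69}(2) for Kasner solutions), thereby forcing $\vol(V^\infty, h^\infty(1)) = 0$, a contradiction with the first paragraph. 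Making this rigorous is the technically most delicate step; it would involve a further compactness/contradiction argument inside $\mathcal{E}^\infty$, identifying the tangent flow at infinity in the degenerate regime with a Kasner-type product and transferring the causal disjointness back to $V^\infty$.
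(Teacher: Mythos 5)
Your contradiction/compactness framework is the right one, but the gap you flag at the end is real, and the missing idea is exactly what makes the paper's proof go through. The issue is that you fix the basepoint of the rescaling limit at $x_i$ first, and only afterwards search $V^\infty$ for a point where the normalized density does not degenerate. Nothing forces such a point to exist: as you note, the pointwise monotonicity only yields an \emph{upper} bound $\int_{V^\infty} F^\infty \le \vol(V^\infty, h^\infty(1))$, which is compatible with $F^\infty \equiv 0$. Your proposed auxiliary argument (causal disjointness on $V^\infty$ in the degenerate regime forcing $\vol(V^\infty, h^\infty(1)) = 0$) also does not obviously work, since $V^\infty$ is the limit of the sets defined by causal intersection \emph{in the original flow} at the time-$t_i$ scale, and disjointness of causal pasts inside $\mathcal E^\infty$ at times $u' > 1$ does not directly constrain the limiting volume of $V^\infty$ at $u=1$.

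The paper's way around this is to choose the basepoint cleverly \emph{before} passing to the limit, by an averaging argument. From the uniformly bounded geometry one has $t_i^{-n}\vol(V_i,h(t_i)) \le {\mathcal V}$ for a uniform ${\mathcal V}$, while the failure of case (1) gives $t_0^{-n}\vol(V_i,h(t_0)) > \epsilon$. Hence the $\dvol_{h(t_i)}$-average over $V_i$ of the ratio $\frac{t_0^{-n}\dvol_{h(t_0)}}{t_i^{-n}\dvol_{h(t_i)}}$ equals $\frac{t_0^{-n}\vol(V_i,h(t_0))}{t_i^{-n}\vol(V_i,h(t_i))} \ge \epsilon/{\mathcal V}$, so there is a point $y_i \in V_i$ where the ratio itself is $\ge \epsilon/{\mathcal V}$. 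By the Fischer--Moncrief pointwise monotonicity this lower bound then propagates to all $u \in [1, t_0/t_i]$. Taking the pointed rescaling limit based at $y_i$ (rather than $x_i$) gives $u^{-n}\dvol_{h^\infty(u)}(y^\infty)/\dvol_{h^\infty(1)}(y^\infty) \ge \epsilon/{\mathcal V}$ for all $u \ge 1$, which is exactly the non-degeneration you needed. Then \cite[Proposition 3.5]{Lott (2018)} yields a $\Lambda'$ with $\mathcal E^\infty_{\Lambda'}$ based at $y^\infty$ being $\epsilon/2$-close to a Lorentzian cone, and the $C^{1,\alpha}$-convergence transfers this to $\mathcal E_{\Lambda' t_i}$ based at $y_i$ for large $i$, contradicting the failure of case (2). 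In short: replace the detour through $F^\infty$ and tangent flows at infinity with a pre-limit selection of $y_i$ via the integral mean, so that you control the density ratio at the basepoint directly.
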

\begin{proof}
    Given ${\mathcal E}$, suppose that the proposition fails. Then for some
    $\Lambda > 1$, $\alpha \in (0,1)$ and $\epsilon > 0$, there is a sequence of
    points $\{x_i\}_{i=1}^\infty$ and times
  $\{t_i\}_{i=1}^\infty$  with $t_i \le i^{-1} t_0$ so that
    the proposition fails if we take $x = x_i$, $t = t_i$ and $\Lambda^\prime = i$.
    That is, for all $i$,
    \begin{enumerate}
    \item
      $\vol(V_i, h(t_0)) > \epsilon t_0^n$, and
    \item
      There are no $t^\prime \in [t_i,  i t_i]$ and $y \in V_i$ so that
      the rescaled pointed flow ${\mathcal E}_{t^\prime}$ on $(X, y)$ is $\epsilon$-close
         in the
      pointed $C^{1,\alpha}$-topology to a Lorentzian cone 
      over a Riemannian Einstein metric with Einstein constant $-(n-1)$.
            \end{enumerate}

    From the uniformly bounded geometry
(see the comments after Proposition \ref{old1.55}),
    there is some
    $D < \infty$ so that
    for all $i$, the corresponding subset $V_i$ lies in
    $B_{h(t_i)}(x_i, Dt_i)$. In particular, there is some
    ${\mathcal V} < \infty$ so that
    for all $i$, we have
$t_i^{-n} \vol(V_i, h(t_i)) \le {\mathcal V}$.

    Now
    \begin{equation} \label{2.72}
      \frac{
        \int_{V_i} \frac{t_0^{-n} \dvol_{h(t_0)} }{ t_i^{-n} \dvol_{h(t_i)} }
        \dvol_{h(t_i)}
        }{
          \int_{V_i} \dvol_{h(t_i)}
        } = 
        \frac{t_0^{-n} \vol(V_i, h(t_0))}{t_i^{-n} \vol(V_i, h(t_i))} 
       \ge 
\frac{\epsilon}{\mathcal V}.      
    \end{equation}
    Thus there is some $y_i \in V_i$ with
    \begin{equation} \label{2.73}
      \frac{t_0^{-n} \dvol_{h(t_0)}(y_i) }{ t_i^{-n} \dvol_{h(t_i)}(y_i) }
\ge 
\frac{\epsilon}{\mathcal V}.      
    \end{equation}
    From the monotonicity of the normalized volume, we also have
\begin{equation} \label{neweqn}
      \frac{(ut_i)^{-n} \dvol_{h(ut_i)}(y_i) }{ t_i^{-n} \dvol_{h(t_i)}(y_i) }
\ge 
\frac{\epsilon}{\mathcal V}      
\end{equation}
for all $u \in \left[ 1, \frac{t_0}{t_i} \right]$.

    After passing to a subsequence, we can assume that the
    ${\mathcal E}_{t_i}$'s, on the pointed spaces $(X, y_i)$, have a pointed limit
    ${\mathcal E}^\infty$.
The limit of (\ref{neweqn}) gives
    \begin{equation} \label{2.74}
        \frac{
          u^{-n} \dvol_{{h}^\infty(u)}({y}^\infty)
        }{
\dvol_{{h}^\infty(1)}({y}^\infty) }
\ge 
\frac{\epsilon}{\mathcal V}
    \end{equation}
    for all $u \ge 1$.
      From \cite[Proposition 3.5]{Lott (2018)},
      there is some $\Lambda^\prime < \infty$ so that
      ${\mathcal E}^\infty_{\Lambda^\prime}$ on $({X}^\infty, {y}^\infty)$
      is $\frac12 \epsilon$-close in the pointed $C^{1,\alpha}$-topology to a Lorentzian cone as in
      case (2). Then for large $i$, 
      ${\mathcal E}_{\Lambda^\prime t_i}$ on $({X}, y_i)$ is
      $\epsilon$-close in the pointed $C^{1,\alpha}$-topology to such a Lorentzian cone.
      This is a contradiction.
      \end{proof}

\subsection{Kasner-type limits} \label{subsect3.5}

In Theorem \ref{0.13} we showed that if $R \le 0$ and $\dvol_0 \neq 0$ then
as $t \rightarrow 0$, there is Kasner-like behavior in an integral sense.
We now improve this to a pointwise statement under the additional assumption
that the flow is noncollapsed and type-I.

Let ${\mathcal S}$ be the collection of pointed
Einstein flows with $L = \frac{1}{n}$, $R = 0$ and
$|K|^2 = H^2$, defined on the time interval $(0, \infty)$; c.f. (\ref{2.14}).

\begin{proposition} \label{2.75}
  Let ${\mathcal E}$ be a type-I CMC Einstein flow.
  Suppose that $R \le 0$. Let $x \in X$ be such that
  $\dvol_0(x) = \lim_{t \rightarrow 0} t^{-1} \dvol_t(x) \neq 0$.
  Suppose that the flow is noncollapsing at $x$ as $t \rightarrow 0$, i.e. 
  that there is a uniform lower bound
  $\vol(B_{h(t)}(x,t)) \ge v_0 t^n$ for some $v_0 > 0$.
  Then as $s \rightarrow 0$,
  the rescaled Einstein flows ${\mathcal E}_s$, pointed at $x$,
  approach ${\mathcal S}$ in the pointed weak $W^{2,p}$-topology and the
  pointed $C^{1, \alpha}$-topology.
\end{proposition}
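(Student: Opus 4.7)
My plan is to extract any subsequential pointed rescaling limit of $\{{\mathcal E}_s\}$ at $x$ and show that it lies in ${\mathcal S}$; the conclusion then follows from Definition \ref{old1.51}. Given a sequence $s_k \to 0$, the noncollapsing hypothesis at $x$ together with the type-I curvature bound lets me apply Proposition \ref{old1.55} to extract, after passing to a subsequence, a pointed limit flow ${\mathcal E}^\infty$ on $(X^\infty, x^\infty)$ defined for $u \in (0, \infty)$ in both the pointed weak $W^{2,p}$- and pointed $C^{1,\alpha}$-topologies. Passing the inequality $R_s = s^2 R \le 0$ to the limit yields $R^\infty \le 0$, and Proposition \ref{2.30} applied to the limit elliptic equation \eqref{old1.13} gives $L^\infty \ge 1/n$ throughout $X^\infty \times (0, \infty)$.

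The heart of the argument is to show that $L^\infty(x^\infty, u) = 1/n$ for all $u > 0$. I intend to obtain this from the pointwise monotonicity identity of Proposition \ref{2.19}, which in the limit flow at the basepoint reads
\begin{equation}
\frac{d}{du}\!\Bigl((-H^\infty)(u)\, \sqrt{\det h^\infty(u)}(x^\infty)\Bigr) \: = \: (H^\infty(u))^2 \Bigl( L^\infty(x^\infty, u) - \tfrac{1}{n}\Bigr) \sqrt{\det h^\infty(u)}(x^\infty),
\end{equation}
so it suffices to prove that $(n/u) \sqrt{\det h^\infty(u)}(x^\infty)$ is constant in $u$. The hypothesis $\dvol_0(x) \neq 0$ is equivalent to the first-order asymptotic $\sqrt{\det h(t)}(x) = c t\bigl(1 + o(1)\bigr)$ as $t \to 0$ for some $c > 0$. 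The $C^{1,\alpha}$-convergence through time-independent pointed diffeomorphisms $\phi_s$ (Definition \ref{old1.49}) gives
\begin{equation}
\sqrt{\det \phi_s^\ast h_s(u)}(x^\infty) \: = \: s^{-n} \sqrt{\det h(su)}(x) \cdot \bigl| \det d\phi_s|_{x^\infty} \bigr| \: \longrightarrow \: \sqrt{\det h^\infty(u)}(x^\infty)
\end{equation}
for each fixed $u > 0$. Taking $u = 1$ and using $\sqrt{\det h(s)}(x) \sim cs$ forces $s^{1-n}|\det d\phi_s|_{x^\infty}|$ to tend to a positive constant $C$, and substituting back at general $u$ yields the exact linearity $\sqrt{\det h^\infty(u)}(x^\infty) = cCu$, hence $(n/u)\sqrt{\det h^\infty(u)}(x^\infty) \equiv ncC$, as needed.

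Once $L^\infty(x^\infty, u) = 1/n$ is established for each $u > 0$, I will propagate the equality to all of $X^\infty$ by the strong maximum principle applied spatially: the function $w := L^\infty - 1/n \ge 0$ satisfies $(-\triangle + n^2/u^2) w \ge 0$ on each time slice, by \eqref{old1.13} combined with $R^\infty \le 0$, and attains the value zero at $x^\infty$, so $w \equiv 0$. This is exactly the spatial Hopf argument underlying Propositions \ref{2.32} and \ref{2.34}, and by Remark \ref{2.33} it does not require compactness or completeness of $X^\infty$. Then $L^\infty \equiv 1/n$, and Proposition \ref{2.32} gives $R^\infty \equiv 0$ and $|K^\infty|^2 = (H^\infty)^2$, so ${\mathcal E}^\infty \in {\mathcal S}$. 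Since this holds for any subsequential limit, Definition \ref{old1.51} yields the stated convergence. The main obstacle will be the density computation in the second paragraph: I have to track carefully how the time-independent Jacobian factor $|\det d\phi_s|_{x^\infty}|$ absorbs the degeneration $\sqrt{\det h(s)}(x) \sim cs$, so that the first-order behavior encoded by $\dvol_0(x) \neq 0$ produces exact linearity in $u$ in the limit.
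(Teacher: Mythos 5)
Your proposal is correct and follows essentially the same strategy as the paper: extract a subsequential limit via the type-I and noncollapsing hypotheses (Proposition \ref{old1.55}), use the hypothesis $\dvol_0(x)\neq 0$ together with the monotonicity identity of Proposition \ref{2.19} to pin down $L^\infty(x^\infty,\cdot)\equiv \tfrac{1}{n}$, and then run the spatial strong maximum principle for the elliptic lapse equation to propagate this to all of $X^\infty$ and force $R^\infty\equiv 0$, $|K^\infty|^2=(H^\infty)^2$. The only (minor) difference is how the basepoint identity $L^\infty(x^\infty,u)=\tfrac{1}{n}$ is extracted: the paper integrates (\ref{2.20}) along the flow to obtain the finite scale-invariant integral (\ref{2.76}) and then passes to the rescaling limit, whereas you track the Jacobian factor of the comparison diffeomorphisms to read off exact linearity of $\sqrt{\det h^\infty(u)}(x^\infty)$ in $u$ and then differentiate; these are equivalent reformulations of the same computation.
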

\begin{proof}
  From Proposition \ref{2.30} and Corollary \ref{2.31},
  we know that $L \ge \frac{1}{n}$, and $t^{-1} \dvol_{h(t)}(x)$ is
  monotonically nondecreasing in $t$.
  Let $\{s_i\}_{i = 1}^\infty$ be a sequence converging to zero.
  After passing to a subsequence, we can assume that there is a limit
  $\lim_{i \rightarrow \infty} {\mathcal E}_{s_i} = {\mathcal E}^\infty$
  in the pointed weak $W^{2,p}$-topology and the
  pointed $C^{1, \alpha}$-topology. Equation (\ref{2.20}) implies that
  \begin{equation} \label{2.76}
   n \int_0^{t_0} \left( L - \frac{1}{n} \right) \frac{dt}{t} =
    \ln \frac{t_0^{-1} \dvol_{t_0}(x)}{\dvol_0(x)}
 < \infty.
    \end{equation}
  It follows that $L^\infty(x,u) = \frac{1}{n}$ for all $u \in (0, \infty)$.
  As in \cite[Proof of Proposition 3.5]{Lott (2018)}, the strong maximum
  principle implies that $L^\infty = \frac{1}{n}$ and $R = 0$. Then
  the constraint equation (\ref{old1.2}) gives $|K|^2 = H^2$.
\end{proof}

\begin{remark} \label{2.77}
  If we omit the noncollapsing assumption then the conclusion
  of Proposition \ref{2.75} still holds,
  provided that we take the limit flow to live on an \'etale groupoid; c.f.
  \cite[Proposition 3.5]{Lott (2018)}.
  \end{remark}

\begin{example} \label{2.78}
  Let ${\mathcal E}$ be a Bianchi-VIII NUT solution on a circle bundle
  over a higher genus surface
  \cite[Section 9.14]{Ellis-Wainwright (1997)}. Then the
  hypotheses of Proposition \ref{2.75} are satisfied.
  As $t \rightarrow 0$, the diameter of the quotient
  surface approaches a constant,
  while the diameter of the circle fiber goes like $t$. The pointed
  rescaling limit ${\mathcal E}^\infty$ is $\R^2$ times a Lorentzian
  cone over a circle.
  \end{example}

\begin{example} \label{2.79}
  As mentioned in Example \ref{2.29},  Mixmaster
  solutions of Bianchi type VIII have $R \le 0$. However, we
  expect that Proposition \ref{2.75} does not give additional
  information.  That is, we expect that
  $\dvol_0 = 0$. The reason is the infinite number of Taub Bianchi-II
  transitions
\cite{Brehm (2016),Liebscher-Harterich-Webster-Georgi (2011)},
which we expect will drive $t^{-1} \dvol_{h(t)}$ to zero
as $t \rightarrow 0$; see (\ref{2.20}).
  \end{example}

\section{Discussion}

In Section 2 of the paper we considered the initial behavior of
vacuum spacetimes with a toral symmetry.
We showed that Gowdy spacetimes in arbitrary dimension have AVTD-like
behavior for the $G$-component of the metric.  This complements
earlier results of Ringstrom \cite{Ringstrom (2006)}. We also considered
four dimensional nonGowdy spacetimes with a $T^2$-symmetry. We again showed
AVTD-like behavior under certain assumptions.  Since such spacetimes
probably do not have AVTD-like behavior in general, it would be interesting
to clarify the borderline between these two types of behavior.
One could also consider other symmetry classes.

Section 3 of the paper was devoted to vacuum spacetimes with a
CMC foliation. We obtained results about the initial geometry
using a normalized
volume functional, which is monotonically nondecreasing in time when the
spatial slices have nonpositive scalar curvature.  One question is
whether there are other relevant monotonic functionals.
We also considered CMC vacuum spacetimes
that are type-I and locally
noncollapsed, as one approaches an initial singularity.
One can ask how widely these assumptions
hold. When they do hold, we used rescaling and compactness arguments to
say something about causal pasts. It is possible that such
rescaling and compactness methods can be combined with
other techniques to obtain further results.


\begin{thebibliography}{10}

    \bibitem{Ames-Beyer-Isenberg-LeFloch (2013)}
E. Ames, F. Beyer, J. Isenberg and P. LeFloch,
``Quasilinear hyperbolic Fuchsian systems and AVTD behavior in $T^2$-symmetric
vacuum spacetimes'',
Ann. Henri Poincar\'e 14, p. 1445-1523 (2013) 
      
    \bibitem{Anderson (2001)}
      M. Anderson, ``On long-time evolution in general relativity and geometrization of 3-manifolds'',
      Comm. Math. Phys. 222, p. 533-567 (2001)

    \bibitem{Anderson (2003)} M. Anderson,
      ``Regularity for Lorentz metrics under curvature bounds'',
      J. Math. Phys. 44, p. 2994-3012 (2003)
      
      \bibitem{Berger-Chrusciel-Isenberg-Moncrief (1997)}
      B. Berger, P. Chru\'sciel, J. Isenberg and V. Moncrief,
      ``Global foliations of vacuum spacetimes with $T^2$ isometry'',
            Ann. Physics 260, p. 117-148 (1997)

\bibitem{Berger-Garfinkle (1998)}
  B. Berger and D. Garfinkle,
  ``Phenomenology of the Gowdy universe on $T^3 \times \R$'',
  Phys. Rev. D 57, p. 4767-4777 (1998)

\bibitem{Berger-Isenberg-Weaver (2001)} B. Berger, J. Isenberg and M. Weaver,
  ``Oscillatory approach to the singularity in
  vacuum spacetimes with $T^2$ isometry'',
  Phys. Rev. D 64, 084006 (2001)
  
 \bibitem{Berger-Moncrief (1993)}
   B. Berger and V. Moncrief, ``Numerical investigation of cosmological
   singularities'', Phys.
   Rev. D 48, p. 4676-4687 (1993)

 \bibitem{BKL (1970)} V. Belinskii, I, Khalatnikov and E. Lifshitz,
   ``Oscillatory approach to a singular
   point in the relativistic cosmology'', Adv. Phys. 19, p. 525-573 (1970)

 \bibitem{Belinski (2017)} V. Belinski and M. Henneaux,
   \underline{The cosmological singularity}, Cambridge University Press,
   Cambridge (2017)
            
          \bibitem{Brehm (2016)} B. Brehm,
            ``Bianchi VIII and IX vacuum cosmologies: Almost every solution forms particle horizons and converges to the Mixmaster attractor'',
            preprint,
            https://arxiv.org/abs/1606.08058 (2016)

          \bibitem{Chen-LeFloch (2009)} B.-L. Chen and P. LeFloch,
            ``Local foliations and optimal regularity of Einstein
            spacetimes'', J. Geom Phys. 59, p. 913-941 (2009)
            
          \bibitem{Clausen-Isenberg (2007)}
A. Clausen and J. Isenberg,
``Areal foliation and asymptotically velocity-term dominated behavior in
$T^2$ symmetric space-times with positive cosmological constant'',
J. Math. Phys. 48, 082501 (2007)

\bibitem{Creighton-Hobill (1994)} T. Creighton and D. Hobill,
  ``Continuous time dynamics and iteratife maps of Ellis-MacCallum-Wainwright
  variables'', in \underline{Deterministic chaos in general relativity},
  eds. D. Hobill, A. Burd and A. Coley,
  NATO ASI Series B: Physics Vol. 332, Springer, New York,
  p. 433-448 (1994)

\bibitem{Eardley-Smarr (1979)} D. Eardley and L. Smarr,
  ``Time functions in numerical relativity: Marginally bound dust collapse'',
  Phys. Rev. D 19, p. 2239-2259
  (1979)

\bibitem{Ellis-Wainwright (1997)} G. Ellis and J. Wainwright,
  \underline{Dynamical systems in cosmology}, Cambridge University Press,
  Cambridge (1997)
  
\bibitem{Fischer-Moncrief (2002)} A. Fischer and V. Moncrief,
  ``Hamiltonian reduction and perturbations of continuously
  self-similar $(n+1)$-dimensional Einstein vacuum spacetimes'',
  Class. Quantum Grav. 19, p. 5557-5589 (2002)

\bibitem{Gerhardt (1983)} C. Gerhardt, ``$H$-surfaces in Lorentzian
  manifolds'', Comm. Math. Phys. 89, p. 523-553 (1983)

\bibitem{Isenberg (2015)} J. Isenberg,
  ``On strong cosmic censorship'', in
  \underline{Surveys in differential geometry XX}, eds.
  L. Bieri and S.-T. Yau, 
  International Press, Somerville, p. 17-36 (2015)
  
\bibitem{Isenberg-Moncrief (1990)} J. Isenberg and V. Moncrief,
  ``Asymptotic behavior of the gravitational field and
  the nature of singularities in Gowdy spacetimes'',
  Adv. Phys. 199, p. 84-122 (1990)
  
\bibitem{Kleiner-Lott (2008)} B. Kleiner and J. Lott,
  ``Notes on Perelman's papers'', Geom. Top. 12, p. 2587-2855 (2008) 
  
    \bibitem{LeFloch-Smulevici (2015)} P. LeFloch and J. Smulevici,
      ``Weakly regular $T^2$-symmetric spacetimes. The global geometry of future Cauchy developments'',
      J. Eur. Math. Soc. 17, 1229–1292 (2015)

    \bibitem{Liebscher-Harterich-Webster-Georgi (2011)}
      S. Liebscher, J. H\"arterich, K. Webster and M. Georgi, ``Ancient
      dynamics in Bianchi models: Approach to periodic cycles'',
      Comm. Math. Phys. 305, p. 59-83  (2011)
      
    \bibitem{Lott (2010)} J. Lott, ``Dimensional reduction and the long-time behavior of Ricci flow'',
      Comm. Math. Helv. 85, p. 485-534 (2010)
      
    \bibitem{Lott (2018)} J. Lott, ``Collapsing in the Einstein flow'',
       Annales Henri Poincare 19, p. 2245-2296 (2018)

     \bibitem{Marsden-Tipler (1980)} J. Marsden and F. Tipler,
       ``Maximal hypersurfaces and foliations of
       mean curvature in general relativity'', Phys. Rep. 66, p. 109-139
       (1980)

     \bibitem{Rendall-Weaver (2001)} A. Rendall and M. Weaver,
       ``Manufacture of Gowdy spacetimes with spikes'',
       Class. Quantum Grav. 18, p. 2959-2975 (2001)
       
\bibitem{Ringstrom (2001)} H. Ringstr\"om,
``The Bianchi IX attractor'', Annales Henri Poincar\'e 2, p. 405-500 (2001)

\bibitem{Ringstrom (2006)} H. Ringstr\"om,
  ``Existence of an asymptotic velocity and implications for the asymptotic
  behaviour in the direction of the singularity in $T^3$-Gowdy'',
  Commun. Pure Appl. Math. 59,
  p. 977-1041 (2006)
  
\bibitem{Ringstrom (2010)} H. Ringstr\"om, ``Cosmic censorship for
  Gowdy spacetimes'', Living Reviews in Relativity 13:2, p. 1-59
  (2010)
  
\bibitem{Schoen-Yau (1979)} R. Schoen and S.-T. Yau,
  ``On the structure of manifolds with positive scalar curvature'',
  Manuscripta Math. 28, p. 
159 -183 (1979)
  
    \end{thebibliography}
\end{document}